\title{Some problems and conjectures about $L^2$-invariants}
\author{Dominik Kirstein}
\email{kirstein@math.uni-bonn.de}
\urladdr{https://dkirstein.github.io}
 \author{Christian Kremer}
\email{kremer@mpim-bonn.mpg.de}
\urladdr{https://sites.google.com/view/christian-kremer-math}
\author{Wolfgang L\"uck}
\email{wolfgang.lueck@him.uni-bonn.de}
          \urladdr{http://www.him.uni-bonn.de/lueck}
\address{Mathematical Institute of the University of Bonn\\
                Endenicher Allee 60\\
                53115 Bonn, Germany}
          \keywords{$L^2$-invariants, condensed sets}
\subjclass[2020]{Primary 57Q99,  Secondary 46L99, 20F99}
\DeclareMathAlphabet{\matheurm}{U}{eur}{m}{n} \DeclareMathAlphabet\EuR{U}{eur}{m}{n}
\SetMathAlphabet\EuR{bold}{U}{eur}{b}{n}
\newtheorem{theorem}{Theorem}[section] 
\newtheorem{proposition}[theorem]{Proposition} 
\newtheorem{conjecture}[theorem]{Conjecture} \newtheorem{problem}[theorem]{Problem}
\newtheorem{question}[theorem]{Question}
\theoremstyle{definition} 
\newtheorem{definition}[theorem]{Definition} \newtheorem{example}[theorem]{Example}
\theoremstyle{remark} \newtheorem{remark}[theorem]{Remark}
\numberwithin{equation}{section}
\newcommand{\comsquare}[8] % Produces a commutative square
{\begin{CD}
    #1 @>#2>> #3\\
    @V{#4}VV @V{#5}VV\\
    #6 @>#7>> #8
  \end{CD}
}
\newcommand{\xycomsquare}[8] % kommutatives Quadrat (xy-Version)
{\xymatrix {#1 \ar[r]^{#2} \ar[d]^{#4} &
    #3 \ar[d]^{#5}  \\
    #6\ar[r]^{#7} & #8 } }
\newcommand{\xycomsquareminus}[8] % kommutatives Quadrat (xy-Version)
{\xymatrix{#1 \ar[r]^-{#2} \ar[d]^-{#4} &
    #3 \ar[d]^-{#5}  \\
    #6\ar[r]^-{#7} & #8 } }
\newcommand{\cala}{\mathcal{A}} \newcommand{\calb}{\mathcal{B}}
\newcommand{\cald}{\mathcal{D}} 
\newcommand{\calf}{\mathcal{F}} 
\newcommand{\calg}{\mathcal{G}}
\newcommand{\calr}{\mathcal{R}}
\newcommand{\calc}{{\mathcal C}} \newcommand{\caln}{{\mathcal N}}
\newcommand{\calu}{{\mathcal U}}
  \newcommand{\IC}{{\mathbb C}}
  \newcommand{\IF}{{\mathbb F}}
\newcommand{\IN}{{\mathbb N}}  
\newcommand{\IQ}{{\mathbb Q}} \newcommand{\IR}{{\mathbb R}}
\newcommand{\IZ}{{\mathbb Z}}
\newcommand{\Ab}{\cat{Ab}}
\newcommand{\Cond}[1]{\cat{Cond}(#1)}
\newcommand{\edCH}{\cat{edCH}}
\newcommand{\Or}{\cat{Or}}
\newcommand{\an}{\operatorname{an}}
 \newcommand{\colim}{\operatorname{colim}}
\newcommand{\costoper}{\operatorname{cost}} 
 \newcommand{\dom}{\operatorname{dom}}
\newcommand{\GL}{\operatorname{GL}} 
 \newcommand{\id}{\operatorname{id}}
 \newcommand{\im}{\operatorname{im}}
 \newcommand{\lcm}{\operatorname{lcm}}
 \newcommand{\op}{\operatorname{op}}
\newcommand{\Ore}{\operatorname{Ore}} 
\newcommand{\PL}{\operatorname{PL}} \newcommand{\RG}{\operatorname{RG}}
\newcommand{\rk}{\operatorname{rk}} 
\newcommand{\sign}{\operatorname{sign}}
\newcommand{\topo}{\operatorname{top}} \newcommand{\Tor}{\operatorname{Tor}}
\newcommand{\tors}{\operatorname{tors}} \newcommand{\tr}{\operatorname{tr}}
 \newcommand{\vol}{\operatorname{vol}}
\newcommand{\version}[1] %marks the date of last editing and compilation
{\begin{center} last edited on #1\\
    last compiled on \today\\
    name of texfile: \jobname
  \end{center}
}
\newcounter{commentcounter}
\newcommand{\cat}[1]{\mathsf{#1}}
\newcommand{\Mod}{\cat{Mod}}
\newcommand{\Set}{\cat{Set}}
\newcommand{\Top}{\cat{Top}}
\newcommand{\CW}{\cat{CW}}
\DeclareMathOperator{\Hom}{Hom}
\DeclareMathOperator{\Fun}{Fun}
\newcommand{\Solid}[1]{\cat{Solid}(#1)}
\begin{document}

%%%%%%%%%%%%%%%%%%%%%%%%%%%%%%%%%%%%%%%%%%%%%%%%%%%%%%%%%%%%%%%%%%%%%%%%%%%%%%%%%
%%%%%%%%%%%%%%%%%%%%%%%%%%%%%%%%%%%%%%%%%%%%%%%%%%%%%%%%%%%%%%%%%%%%%%%%%%%%%%%%%
%%%%%%%%%%%%%%%%%%%%%%%%%%%%%%%%%%%%%%%%%%%%%%%%%%%%%%%%%%%%%%%%%%%%%%%%%%%%%%%%%

                                 %%%%%%%%%%%%%%%%%%%%%%%%%%%%%%%% kkl_l2-problems.tex %%%%%%%%%%%%%%%%%%%%%%%%%%%%%%%%%

%%%%%%%%%%%%%%%%%%%%%%%%%%%%%%%%%%%%%%%%%%%%%%%%%%%%%%%%%%%%%%%%%%%%%%%%%%%%%%%%%
%%%%%%%%%%%%%%%%%%%%%%%%%%%%%%%%%%%%%%%%%%%%%%%%%%%%%%%%%%%%%%%%%%%%%%%%%%%%%%%%%
%%%%%%%%%%%%%%%%%%%%%%%%%%%%%%%%%%%%%%%%%%%%%%%%%%%%%%%%%%%%%%%%%%%%%%%%%%%%%%%%%

                                 \typeout{---------------------------- L2approx_survey.tex
                                   ----------------------------}

%%%%%%%%%%%%%%%%%%%%%%%%%%%%%%%%%%%%%%%%%%%%%%%%%%%%%%%%%%%%%%%%%%%%%%%%%%%%%%%%%
                                 %%%%%%%%%%%%%%%%%%%%%%%%%%%%%%%%%%% Abstract %%%%%%%%%%%%%%%%%%%%%%%%%%%%%%%%%%%%%%%
%%%%%%%%%%%%%%%%%%%%%%%%%%%%%%%%%%%%%%%%%%%%%%%%%%%%%%%%%%%%%%%%%%%%%%%%%%%%%%%%%

                                 \typeout{------------------------------------ Abstract
                                   ----------------------------------------}

  \begin{abstract}
    In this paper we discuss open problems and conjectures concerning $L^2$-invariants.
  \end{abstract}

  \maketitle

  %%%%%%%%%%%%%%%%%%%%%%%%%%%%%%%%%%%%%%%%%%%%%%%%%%%%%%%%%%%%%%%%%%%%%%%%%%%%%%%%%
  %%%%%%%%%%%%%%%%%%%%%%%%%%%%%%%%%% Introduction %%%%%%%%%%%%%%%%%%%%%%%%%%%%%%%%%%%%%
  %%%%%%%%%%%%%%%%%%%%%%%%%%%%%%%%%%%%%%%%%%%%%%%%%%%%%%%%%%%%%%%%%%%%%%%%%%%%%%%%%

  \typeout{------------------------------- Section 1: Introduction
    --------------------------------}

  \section{Introduction}

  In this article we give a survey on open problems and conjectures concerning
  $L^2$-invariants.  We cover the whole portfolio and not only certain aspects as they are
  considered in the previous more specialized (and within their scope more detailed) survey
  articles~\cite{Lueck(2016_l2approx), Lueck(2021survey)}.  Moreover, we include some new
  results and problems, which have occurred after these two survey articles were written.
  The reader may select a specific topic by looking at the table of contents below.

  \subsection*{Acknowledgements}
  This paper is funded by the ERC Advanced Grant ``KL2MG-interactions'' (no.  662400) of the
  third author granted by the European Research Council and by the Deutsche
  Forschungsgemeinschaft  under Germany's Excellence Strategy \-- GZ 2047/1, Projekt-ID 390685813,
  Hausdorff Center for Mathematics at Bonn.
  All three authors are supported by the Max Planck Institute for Mathematics in Bonn.

  We thank Grigori Avramidi for comments on an earlier version of this article and Emma Brink for helpful discussions concerning Section~\ref{sec:L2-invariants_and_condensed_mathematics}.

  The paper is organized as follows:
  \tableofcontents

  %%%%%%%%%%%%%%%%%%%%%%%%%%%%%%%%%%%%%%%%%%%%%%%%%%%%%%%%%%%%%%%%%%%%%%%%%%%%%%%%%
  %%%%%%%%%%%%%%%%%%%%%%%%%%%%%%%%%%  Section 2: Basics about $L^2$-invariants   %%%%%%%%%%%%%%%%
  %%%%%%%%%%%%%%%%%%%%%%%%%%%%%%%%%%%%%%%%%%%%%%%%%%%%%%%%%%%%%%%%%%%%%%%%%%%%%%%%%

  \typeout{------------------------ Section 2: Basics about $L^2$-invariants  --------------------}

  \section{Basics about $L^2$-invariants}\label{sec:Basics_about_L2-invariants}

  We present some basic definitions, constructions, and properties of $L^2$-invariants for the reader's convenience.
  The technical aspects of this section will  not be needed in the other sections.

  %-----------------------------------------------------------------------------------------

  \subsection{Group von Neumann algebras and their dimension function}%
  \label{subsec:Group_von_Neumnan_algebras_and_their_dimension_functions}

  The \emph{group von Neumann algebra}
  \begin{equation}
    \caln(G)  := \calb(L^2(G))^G
    \label{caln(G)}
  \end{equation}
  of the (discrete) group $G$ is defined to be  the $\IC$-algebra of $G$-equivariant bounded operators
  from $L^2(G)$ to $L^2(G)$, where $L^2(G)$ is the complex Hilbert space of square integrable functions
  from $G$ to $\IC$. It can also be viewed as a
  completion of the complex group ring $\IC G$ with respect to the weak (or equivalently strong) operator topology.
    We will view it in the sequel just as a ring with unit and
  essentially forget all the functional analytic aspects. It has much nicer properties than $\IC G$, for example because it is \emph{semi-hereditary}, i.e., any finitely generated
  submodule of a projective $\caln(G)$-module is projective again. More information about
  von Neumann algebras and their basic properties can be found for instance
  in~\cite[Section~2.2]{Kammeyer(2019)} and~\cite[Section~1.1 and Section~9.1]{Lueck(2002)}.

  The main feature is the \emph{dimension function} that assigns to every $\caln(G)$-module
  $M$ an element
  \begin{equation}
    \dim_{\caln(G)}(M) \in \IR^{\ge 0} \amalg \{\infty\}.
    \label{dim(M)}
  \end{equation}
  for $\IR^{\ge 0} = \{r \in \IR \mid r \ge 0\}$.  We summarize its main properties:

  \begin{enumerate}
  	\item It is \textit{additive} on short exact sequences of $\caln(G)$-modules, also in the extreme cases following the convention that for $x \in \IR^{\ge 0} \amalg \{\infty\}$ we set $x + \infty = \infty$.
  	\item The dimension function extends the classical Murray-von Neumann dimension for
    finitely generated projective $\caln(G)$-modules defined by the Hattori-Stalling rank
    associated to the standard trace of $\caln(G)$.
    In particular, we have $\dim_{\caln(G)}(\caln(G)) = 1$, which together with additivity implies that the dimension of a finitely generated $\caln(G)$-module is finite.
  	\item It is \textit{cofinal} in the sense that for any directed union $M = \bigcup_{i \in I } M_i$ of $\caln(G)$-modules the equality $\dim_{\caln(G)}(M) = \sup\{\dim_{\caln(G)}(M_i) \mid i \in I\}$ holds.
    \item It satisfies the \textit{continuity property}. A
    special case says that for any finitely generated $\caln(G)$-module $M$ one can find a finitely generated projective $\caln(G)$-module $PM$ together with two exact sequences $0 \to TM \to M \to PM \to 0$ and $0 \to \caln(G)^n \to \caln(G)^n \to TM \to 0$ for which
    $\dim_{\caln(G)}(TM) = 0$ and $\dim_{\caln(G)}(M) = \dim_{\caln(G)}(PM)$ hold. The general version can be found in~\cite[Theorem 6.7]{Lueck(2002)}.
  	\item We have
  	$\dim_{\caln(G)}(M) = 0$ if and only if every projective submodule of $M$ is trivial.
    Moreover, a finitely generated projective $\caln(G)$-module $M$ is trivial if and only if
  	$\dim_{\caln(G)}(M) = 0$ holds.
  \end{enumerate}
  It turns out that the dimension function is uniquely determined by the properties (1) - (4).
  In the special case where $G$ is finite and $M$ is an $\caln(G)$-module one has $\caln(G) = \IC G$, and $|G| \cdot \dim_{\caln(G)}(M)$
  is the (classical) dimension of the underlying complex vector space of $M$. If $G$
  contains an element of infinite order, then every element $r \in \IR^{\ge 0}$ can be
  realized as $r = \dim_{\caln(G)}(M)$ for some finitely generated projective $\caln(G)$-module
  $M$. More information about the dimension function can be found for instance in~\cite[Sections~2.3
  and
  4.2]{Kammeyer(2019)},~\cite{Lueck(1998a)},~\cite{Lueck(1998b)},~\cite[Chapter~6]{Lueck(2002)}.

  %-----------------------------------------------------------------------------------------

  \subsection{$L^2$-Betti numbers}\label{subsec:L2-Betti_numbers}

  Let $Y$ be a $G$-space. Its singular chain complex $C^s_*(Y)$ is acted upon by $G$, so it acquires the structure of a chain complex of
  $\IZ G$-modules.  Note that $\caln(G)$ can be viewed as an $\caln(G)$-$\IZ
  G$-bimodule.  Then we obtain an $\caln(G)$-chain complex $\caln(G) \otimes_{\IZ G}
  C^s_*(X)$. Its $n$th homology $H_n(\caln(G) \otimes_{\IZ G} C^s_*(X))$ is a
  $\caln(G)$-module. We define the \emph{$n$th-$L^2$-Betti number} of the $G$-space $Y$ to be
  \begin{equation}
    b_n^{(2)}(Y;\caln(G)) := \dim_{\caln(G)}\bigl(H_n(\caln(G) \otimes_{\IZ G} C^s_*(Y))\bigr)
    \in \IR^{\ge 0} \amalg \{\infty\}.
    \label{b_n_upper_(2)(X,caln(G))}
  \end{equation}
  If $Y$ is a $G$-$CW$-complex, we can replace $C^s_*(Y)$ by the cellular $\IZ G$-chain
  complex $C_*(Y)$ in~\eqref{b_n_upper_(2)(X,caln(G))}. If $Y$ is a $G$-$CW$-complex such
  that $Y/G$ is a finite $d$-dimensional  $CW$-complex,  then
  $b_n^{(2)}(Y;\caln(G))$ is finite for every $n \ge 0$ and vanishes for $n > d$.

  We will mainly be interested in the case where $X$ is a path connected space possessing a universal covering $p \colon \widetilde{X} \to X$. Recall that
  $\widetilde{X}$ then comes with a free $\pi$-action for $\pi$ the fundamental group of $X$, and
  we abbreviate
  \begin{equation}
    b_n^{(2)}(\widetilde{X})  = b_n^{(2)}(\widetilde{X};\caln(\pi)) \in \IR^{\ge 0} \amalg \{\infty\}.
    \label{b_n_upper_(2)(widetilde(G)}
  \end{equation}
  If $X$ is not path connected, one defines $b_n^{(2)}(\widetilde{X})$ to be
  $\sum_{C \in \pi_0(X)} b_n^{(2)}(\widetilde{C})$.  If $X$ is a $CW$-complex of finite
  type, then $b_n^{(2)}(\widetilde{X})$ is finite.  If $M$ is a closed Riemannian manifold,
  then $b_n^{(2)}(\widetilde{M})$ agrees with the original definition of the $L^2$-Betti
  number due to Atiyah~\cite{Atiyah(1976)} in terms of heat kernel on the universal
  covering, namely, we have
  \begin{eqnarray}
  b_n^{(2)}(\widetilde{M})
  & = &
  \lim_{t \to \infty} \int_{\calf} \tr(e^{-t\widetilde{\Delta}_n}(\widetilde{x},\widetilde{x})) \operatorname{dvol}
  \label{L_upper_2-Betti_number_and_heat_kernel}
  \end{eqnarray}
  where $\calf \subseteq \widetilde{X}$ is a fundamental domain
  for the $\pi$-action on $\widetilde{M}$ and $\tr(e^{-t\widetilde{\Delta}_n}(\widetilde{x},\widetilde{x}))$
  is the trace of the heat kernel on $\widetilde{M}$ at $(\widetilde{x}, \widetilde{x})$
  for $\widetilde{x} \in \widetilde{X}$ at the time $t$ in degree $n$.

  If $G$ is a (discrete) group, we define its \emph{$n$th-$L^2$-Betti number}
  \begin{equation}
    b_n^{(2)}(G) := b_n^{(2)}(EG;\caln(G)) \in \IR^{\ge 0} \amalg \{\infty\}.
    \label{b_n_upper_(2)(G)}
  \end{equation}
  where $EG$ is a the total space of the universal principal $G$-bundle $EG \to BG$.  If
  $\underline{E}G$ is the classifying space for proper $G$-actions, see for
  instance~\cite{Lueck(2005s)}, then we conclude
  $b_n^{(2)}(G) := b_n^{(2)}(\underline{E}G;\caln(G))$ from~\cite[Theorem~6.54~(2) on
  page~265]{Lueck(2002)}. This is useful since there are often small models of
  $\underline{E}G$ available when $EG$ may have no such nice models.
  For example, when $G$ has torsion every $CW$-model for $EG$ has cells in infinitely many dimension, whereas one might construct a finite dimensional model for $\underline{E}G$ to conclude vanishing of $L^2$-Betti numbers in high degrees.

  Roughly speaking, the usefulness of the $L^2$-Betti numbers stems often from the fact that they vanish quite often and pose interesting obstructions to the solution of geometric problems if they don't.
  For instance, for a connected $CW$-complex $X$ we have
  $b_0(X) = 1$, whereas $b_0^{(2)}(\widetilde{X})$ vanishes if $\pi$ is infinite and is equal to
  $|\pi|^{-1}$ if $\pi$ is finite. We mention as an appetizer two rather surprising
  examples.

  \begin{example}
  	If the group $G$ contains an amenable  infinite  normal subgroup $H \subseteq G$,
  	then $b_n^{(2)}(G)$ vanishes for every $n \ge 0$.  This implies that the Euler
  	characteristic $\chi(BG)$ vanishes, if we additionally assume that $BG$ has a finite
  	$CW$-model.  Note that we do not require anything about $BH$. This is a special case of~\cite[Theorem 7.2]{Lueck(2002)}.
  \end{example}

  \begin{example}
  	If $1 \to K \to G \to Q \to 1$ is an extension of infinite groups such that $K$ is finitely
  	generated and $G$ is finitely presented, then $b_1^{(2)}(G)$ vanishes and hence the
  	deficiency of $G$ is bounded from above by $1$.  Recall that the deficiency is the maximum
  	of the set of integers $g(P) - r(P)$, where $P$ runs through all finite presentations $P$
  	of $G$ and $g(P)$ is the number of generators and $r(P)$ the number of relations.~\cite[Theorem 6.1]{Lueck(1994b)}
  \end{example}

  %-----------------------------------------------------------------------------------------

  \subsection{Fuglede-Kadison determinants}\label{subsec:Fuglede-Kadison_determinants}

  Given an $\caln(G)$-map  $f \colon P \to Q$  of finitely generated projective $\caln(G)$-modules,
  one can define its \emph{Fuglede-Kadison determinant}
  \begin{equation}
    {\det}_{\caln(G)}(f) \in \IR^{\ge 0}  := \{r \in \IR \mid r \ge 0\}.
    \label{det(f)}
  \end{equation}
  Let us briefly explain its definition, even though it will be of no concern to us later.
  One can interprete $f$ as a bounded $G$-equivariant operator $f \colon V \to W$ of
  \emph{finitely generated Hilbert $\caln(G)$-modules}, i.e., $V$ is a Hilbert space $V$
  together with a linear isometric $G$-action such that there exists an isometric linear
  $G$-embedding of $V$ into $L^2(G)^r$ for some natural number $r$, and analogously for $W$.
  Denote by $\{E_{\lambda}^{f^*f} \mid \lambda \in \IR\}$ the (right-continuous) family of
  spectral projections of the positive operator $f^*f \colon V \to V$.  Define the
  \emph{spectral density function of} $f$ by
    \[
      F_f \colon \IR \to \IR^{\ge 0} \quad \lambda \mapsto
      \dim_{\caln(G)}\bigl(\im(E_{\lambda^2}^{f^*f})\bigr).
    \]
    The spectral density function is monotone, non-decreasing, right-continuous and satisfies $F(0) = \dim_{\caln(G)}(\ker(f))$.
    Let $dF$ be the unique measure on the Borel $\sigma$-algebra
    on $\IR$ that satisfies $dF((a,b]) = F(b)-F(a)$ for $a < b$.  Define
    Fuglede-Kadison determinant $\det_{\caln(G )}(f) \in \IR^{\ge 0}$   to be
    \[
    {\det}_{\caln(G )}(f) = \exp\left(\int_{0+}^{\infty} \ln(\lambda) \; dF\right)
    \]
    if the Lebesgue integral $\int_{0+}^{\infty} \ln(\lambda) \; dF $ is a real
    number, and to be  $0$ if we have $\int_{0+}^{\infty} \ln(\lambda) \; dF = - \infty$.
    We say  that $f$ is of \emph{determinant class} if $\det(f) \not= 0$.
    With our conventions we have $\det(0) = 1$ for the zero map $0 \colon P \to Q$

    \begin{example}[Fuglede-Kadison determinant for finite
      groups]\label{exa:det_for_finite_groups}
      To illustrate this definition, we look at the example where $G$ is finite. Then
      $\caln(G)$ is the same as $\IC G$ and we can think of $f$ as a $\IC G$-linear map of
      finite-dimensional unitary $G$-representations. The spectral density function $F_f$ is
      the right-continuous step function, whose value at $\lambda$ is the sum of the complex
      dimensions of the eigenspaces of $f^*f$ for eigenvalues $\mu \le \lambda^2$ divided by
      the order of $G$. Let $\lambda_1$, $\lambda_2$, $\ldots$, $\lambda_r$ be the non-zero
      eigenvalues of $f^{\ast}f$ with multiplicity $\mu_i$.
      One computes
      \begin{multline*} {\det}_{\caln(G)}(f) = \exp\left(\sum_{i = 1}^r \frac{\mu_i}{|G|}
          \cdot \ln(\sqrt{\lambda_i})\right) = \prod_{i=1}^r \lambda_i^{\frac{\mu_i}{2\cdot
            |G|}} = {\det}_{\IC}\bigl(\overline{f^{\ast}f}\big)^{\frac{1}{2\cdot |G|}}.
      \end{multline*}
      Here, $\overline{f^{\ast}f}$ denotes the automorphism of the orthogonal complement of the kernel
      of $f^{\ast}f$ induced by $f^{\ast}f$ and ${\det}_{\IC}\bigl(\overline{f^{\ast}f})$ is put to be $1$ if $f$ is the zero
      operator.
      If $f \colon \IC G^m \to \IC G^m$ is an automorphism, we get
      \[ {\det}_{\caln(G)}(f) = \left|{\det}_{\IC}(f)\right|^{\frac{1}{|G|}}.
      \]
    \end{example}

    More information about the Fuglede-Kadison determinant can be found for instance
    in~\cite[Section~3.2]{Lueck(2002)}.

  %-----------------------------------------------------------------------------------------

  \subsection{$L^2$-torsion}\label{subsec:L2-torsion}

  Let $X$ be a connected finite $CW$-complex. Let $C_*(\widetilde{X})$ be the cellular
  $\IZ \pi$-chain complex of its universal covering for $\pi$ the fundamental group of
  $X$. Then $\caln(\pi) \otimes_{\IZ \pi} C_*(\widetilde{X})$ is an $\caln(\pi)$-chain complex,
  which is finite dimensional and consists of finitely generated free $\caln(\pi)$-modules.
  If $c_n \colon C_n(\widetilde{X}) \to C_{n-1}(\widetilde{X})$ is the $n$th differential
  of $C_*(\widetilde{X})$, then
  \[
  \id_{\caln(\pi)} \otimes_{\IZ \pi} c_n \colon \caln(\pi) \otimes_{\IZ \pi} C_n(\widetilde{X})
  \to \caln(\pi)\otimes_{\IZ \pi} C_{n-1}(\widetilde{X})
\]
 is an $\caln(\pi)$-map of finitely
  generated free $\caln(\pi)$-modules and its Fuglede-Kadison determinant
  $\det_{\caln(\pi)}(\id_{\caln(\pi)} \otimes_{\IZ \pi} c_n) \in \IR^{\ge 0} $ is defined. We say
  that $X$ is \emph{of determinant class} if
  $\det_{\caln(\pi)}\bigl(\id_{\caln(\pi)} \otimes_{\IZ \pi} c_n\bigr) \not = 0$ holds for every
  $n \ge 1$, and in this case we define the \emph{$L^2$-torsion} of $X$ to be
  \begin{equation}
    \rho^{(2)}(\widetilde{X}) = - \sum_{n \ge 1}
   (-1)^n  \cdot \ln\bigl({\det}_{\caln(\pi)}(\id_{\caln(\pi)} \otimes_{\IZ \pi} c_n) \bigr) \in \IR.
  \label{rho_upper_(2)(widetilde(x))}
  \end{equation}

  We say that $X$ is \emph{$L^2$-acyclic} if $b_n^{(2)}(\widetilde{X}) = 0$ for $n \ge 0$.
  We say that $X$ is \emph{$\det$-$L^2$-acyclic} if it is $L^2$-acyclic and of determinant
  class.  If $X$ is not path connected, we define
  $\rho^{(2)}(\widetilde{X}) = \sum_{C \in \pi_0(X)} \rho^{(2)}(\widetilde{C})$.

  The definition of $L^2$-torsion in the analytic setting goes back to Lott~\cite
  {Lott(1992a)} and Mathai~\cite{Mathai(1992)}, and in the topological setting to
  L\"uck-Rothenberg~\cite{Lueck-Rothenberg(1991)}.  The equality of these two versions has
  been proved in~\cite{Burghelea-Friedlander-Kappeler-McDonald(1996a)}.

  %-----------------------------------------------------------------------------------------

  \subsection{Basic properties of $L^2$-Betti numbers and $L^2$-torsion}%
  \label{subsec:Basic_properties_of_L2-Betti_numbers_and_L2-torsion}
  \ \\[1mm]
  Here is a list of basic properties of $L^2$-Betti numbers and $L^2$-torsion.

  \begin{theorem}\label{thm:main_properties_of_rho2(widetildeX)}\
  \begin{enumerate}
  \item\label{list:main_properties_of_rho2(widetildeX):homotopy_invariance} \emph{(Simple)
      Homotopy invariance}, see~\cite[Theorem~1.35~(1) on page~37 and Theorem~3.96~(i) on
    page~163]{Lueck(2002)},\cite[Theorem~6.7~(2)]{Lueck(2018)}.

    \begin{enumerate}
    \item If $X$ and $Y$ are homotopy equivalent spaces, then we get for $n \ge 0$
      \[b_n^{(2)}(\widetilde{X}) = b_n^{(2)}(\widetilde{Y});
      \]
    \item Suppose that $X$ and $Y$ are homotopy equivalent finite $CW$-complexes
      and $\widetilde{X}$ or $\widetilde{Y}$ is $\det$-$L^2$-acyclic.  Then
      both $\widetilde{X}$ and $\widetilde{Y}$ are $\det$-$L^2$-acyclic;

    \item Let $f \colon X \to Y$ be  a homotopy equivalence of finite $CW$-complexes.
      Assume that $\widetilde{X}$ and
      $\widetilde{Y}$ are $\det$-$L^2$-acyclic. Suppose that
      $f$ is simple, or  that $\pi_1(X)$ satisfies the Determinant
      Conjecture~\ref{con:Determinant_Conjecture},  or that the $K$-theoretic Farrell-Jones Conjecture
      for $\IZ \pi_1(X)$  holds.  Then
      \[
        \rho^{(2)}(\widetilde{Y}) = \rho^{(2)}(\widetilde{X});
      \]
    \end{enumerate}

  \item\label{list:main_properties_of_rho2(widetildeX):Euler-Poincare_formula}
    \emph{Euler-Poincar\'e formula}, see~\cite[Theorem~1.35~(2) on page~37]{Lueck(2002)}.
    \\[1mm]
    We get for the Euler characteristic $\chi(X)$ of a finite $CW$-complex $X$
    \[
      \chi(X) = \sum_{n \ge 0 } (-1)^n \cdot b_n^{(2)}(\widetilde{X});
    \]

  \item\label{list:main_properties_of_rho2(widetildeX):sum_formula}
    \emph{Sum formula}, see~\cite[Theorem~3.96~(2)  on page~164]{Lueck(2002)}.\\[1mm]
    Consider a pushout of finite $CW$-complexes such that $j_1$ is an inclusion of
    $CW$-complexes, $j_2$ is cellular, and $X$ inherits its $CW$-complex structure from
    $X_0$, $X_1$ and $X_2$
    \[
      \xymatrix{X_0 \ar[r]^-{j_1} \ar[d]_{j_2} & X_1 \ar[d]^{i_1} \\ X_2 \ar[r]_-{i_2} & X.}
    \]
    Assume that for $k=0,1,2$ the map $\pi_1(i_k,x)\colon \pi_1(X_k,x) \to \pi_1(X,j_k(x) )$
    induced by the obvious map $i_k\colon X_k \to X$ is injective for all base points $x$ in
    $X_k$.
    \begin{enumerate}
    \item If $\widetilde{X_0}$, $\widetilde{X_1}$, and $\widetilde{X_2}$ are $L^2$-acyclic,
      then $\widetilde{X}$ is $L^2$-acyclic;

    \item If $\widetilde{X_0}$, $\widetilde{X_1}$, and $\widetilde{X_2}$ are
      $\det$-$L^2$-acyclic, then $\widetilde{X}$ is $\det$-$L^2$-acyclic and we get
      \[
        \rho^{(2)}(\widetilde{X}) = \rho^{(2)}(\widetilde{X_1}) +
        \rho^{(2)}(\widetilde{X_2}) - \rho^{(2)}(\widetilde{X_0});
      \]
    \end{enumerate}

  \item\label{list:main_properties_of_rho2(widetildeX):Poincare_duality}
    \emph{Poincar\'e duality}, see~\cite[Theorem~1.35~(3) on page~37 and Theorem~3.96~(3)  on page~164]{Lueck(2002)}.\\[1mm]
    Let $M$ be a closed manifold of dimension $d$
    \begin{enumerate}
    \item Then
      \[
        b_n^{(2)}(\widetilde{M}) = b_{d-n}^{(2)}(\widetilde{M});
      \]
    \item Suppose that $n$ is even and $\widetilde{M}$ is $\det$-$L^2$-acyclic. Then
      \[
        \rho^{(2)}(\widetilde{M}) = 0;
      \]
    \end{enumerate}

  \item\label{list:main_properties_of_rho2(widetildeX):product_formula}
    \emph{Product formula}, see~\cite[Theorem~1.35~(4) on page~37 and Theorem~3.96~(4)  on page~164]{Lueck(2002)}.\\[1mm]
    Let $X$ and $Y$ be finite $CW$-complexes.

    \begin{enumerate}

    \item Then
      \[
        b_n^{(2)}(\widetilde{X \times Y}) = \sum_{\substack{i,j \in \IN\\ n = i+j}}
        b_i^{(2)}(\widetilde{X}) \cdot b_j^{(2)}(\widetilde{Y});
      \]
    \item Suppose that $\widetilde{X}$ is $\det$-$L^2$-acyclic. Then
      $\widetilde{X \times Y}$ is $\det$-$L^2$-acyclic and
      \[
        \rho^{(2)}(\widetilde{X \times Y}) = \chi(Y) \cdot \rho^{(2)}(\widetilde{X});
      \]
    \end{enumerate}
  \item\label{list:main_properties_of_rho2(widetildeX):multiplicativity}
    \emph{Multiplicativity}, see~\cite[Theorem~1.35~(9) on page~38 and Theorem~3.96~(5)  on page~164]{Lueck(2002)}.\\[1mm]
    Let $X \to Y$ be a finite covering of finite $CW$-complexes with $d$ sheets.

    \begin{enumerate}
    \item Then we get for $n \ge 0$
      \[
        b_n^{(2)}(\widetilde{X}) = d \cdot b_n^{(2)}(\widetilde{Y});
      \]
    \item Then $\widetilde{X}$ is $\det$-$L^2$-acyclic if and only if $\widetilde{Y}$ is
      $\det$-$L^2$-acyclic, and in this case
      \[
        \rho^{(2)}(\widetilde{X}) = d \cdot \rho^{(2)}(\widetilde{Y});
      \]
    \end{enumerate}

  \item\label{list:main_properties_of_rho2(widetildeX):sofic_det-class}
    \emph{Determinant class}.\\[1mm]
    If $\pi_1(C)$ satisfies the Determinant Conjecture~\ref{con:Determinant_Conjecture} for
    each component $C$ of the finite $CW$-complex $X$, then $\widetilde{X}$ is of
    determinant class;

  \item\label{list:main_properties_of_rho2(widetildeX):det-class:b_0_upper(2)}
    \emph{$0$th $L^2$-Betti number}, see~\cite[Theorem~1.35~(8) on page~38]{Lueck(2002)}.\\[1mm]
    If $X$ is a connected finite $CW$-complex with fundamental group $\pi$, then
    \[
      b_0^{(2)}(\widetilde{X}) =
      \begin{cases}
        \frac{1}{|\pi|} & \text{if}\; \pi \; \text{is finite};
        \\
        0 & \text{otherwise;}
      \end{cases}
    \]

  \item\label{list:main_properties_of_rho2(widetildeX):det-class:fibrations} \emph{Fibration
      formula}, see~\cite[Lemma~1.41 on page~45 and Corollary~3.103 on
    page~166]{Lueck(2002)}.  \smallskip
    \begin{enumerate}
    \item Let $p \colon E \to B$ a fibration whose base space $B$ is a connected finite
      $CW$-complex and whose fiber is homotopy equivalent to a finite $CW$-complex $Z$. Suppose
      that for every $b \in B$ and $x \in F_b := p^{-1}(b)$ the inclusion $p^{-1}(b) \to E$
      induces an injection on the fundamental groups $\pi_1(F_b,x) \to \pi_1(E,x)$, and that
      $Z$ is $L^2$-acyclic.

      Then $E$ is homotopy equivalent to a finite $CW$-complex $X$ which is $L^2$-acyclic;

    \item Let $F \xrightarrow{i} E \xrightarrow{p} B$ be locally trivial fiber bundle of
      finite $CW$-complexes.  Suppose that $B$ is connected, the map
      $\pi_1(F,x) \to\pi_1(E,i(x))$ is bijective for every base point $x \in F$, and
      $\widetilde{F}$ is $\det$-$L^2$-acyclic.

      Then $\widetilde{E}$ is $\det$-$L^2$-acyclic and
      \[
        \rho^{(2)}(\widetilde{E}) = \chi(B) \cdot \rho^{(2)}(\widetilde{F});
      \]
    \end{enumerate}

  \item\label{list:main_properties_of_rho2(widetildeX):S_upper_1-actions}
    \emph{$S^1$-actions}, see~\cite[Theorem~1.40 on page~43 and Theorem~3.105  on page~168]{Lueck(2002)}.\\[1mm]
    Let $X$ be a connected compact $S^1$-$CW$-complex, for instance a closed smooth manifold
    with smooth $S^1$-action. Suppose that for one orbit $S^1/H$ (and hence for all orbits)
    the inclusion into $X$ induces a map on $\pi_1$ with infinite image (so in particular the
    $S^1$-action has no fixed points). Then $\widetilde{X}$ is $\det$-$L^2$-acyclic and
    $\rho^{(2)}(\widetilde{M})$ vanishes;

  \item\label{list:main_properties_of_rho2(widetildeX):aspherical}
    \emph{Aspherical spaces}, see~\cite[Theorem~3.111  on page~171 and Theorem~3.113  on page~172]{Lueck(2002)}.
    \smallskip
    \begin{enumerate}

    \item Let $M$ be an aspherical closed smooth manifold with a smooth $S^1$-action.  Then
      the conditions appearing in
      assertion~\eqref{list:main_properties_of_rho2(widetildeX):S_upper_1-actions} are
      satisfied and hence $\widetilde{M}$ is $\det$-$L^2$-acyclic and
      $\rho^{(2)}(\widetilde{X})$ vanishes;

    \item If $X$ is an aspherical finite $CW$-complex whose fundamental group contains an
       elementary amenable infinite normal subgroup, then $\widetilde{X}$ is
      $\det$-$L^2$-acyclic and $\rho^{(2)}(\widetilde{X})$ vanishes;

    \end{enumerate}

  \item\label{list:main_properties_of_rho2(widetildeX):mapping_tori}
    \emph{Mapping tori}, see~\cite[Theorem~1.39 on page~42]{Lueck(2002)}.\\[1mm]
    Let $f \colon X \to X$ be a self homotopy equivalence of a finite $CW$-complex.  Denote
    by $T_f$ its mapping torus.

    \begin{enumerate}
    \item Then $\widetilde{T_f}$ is $L^2$-acyclic;

    \item If $\widetilde{X}$ is $\det$-$L^2$-acyclic, then $\rho^{(2)}(\widetilde{T_f})$ vanishes;
    \end{enumerate}

  \item\label{list:main_properties_of_rho2(widetildeX):hyperbolic}
    \emph{Hyperbolic manifolds}, see~\cite{Hess-Schick(1998)},~\cite[Theorem~1.39 on page~42]{Lueck(2002)}.\\[1mm]
    Let $M$ be a hyperbolic closed manifold of dimension $d$.
    \begin{enumerate}
    \item If $d$ is odd, $\widetilde{M}$ is $\det$-$L^2$-acyclic;

    \item Suppose that $d = 2m$ is even. Then $b_n^{(2)}(\widetilde{M})$ vanishes for
      $n \not= m$, and we have $(-1)^m \cdot \chi(M) = b_m^{(2)}(\widetilde{M})> 0$;

      \item For every number $m$ there exists an explicit constant $C_m> 0$ with the following
      property: If $M$ is a hyperbolic closed manifold of dimension $(2m+1)$ with volume
      $\vol(M)$, then
      \[\rho^{(2)}(\widetilde{M}) = (-1)^m \cdot C_m \cdot \vol(M).
      \]
      We have $C_1 = \frac{1}{6\pi}$. The number $\pi^m \cdot C_m$ is always rational;
    \end{enumerate}

  \item\label{list:main_properties_of_rho2(widetildeX):approximation} \emph{Approximation of
      $L^2$-Betti numbers by classical Betti numbers},
    see~\cite{Lueck(1994c)},\cite[Chapter~13]{Lueck(2002)}.
    \\[1mm]
    Let $X$ be a connected finite $CW$-complex with fundamental group \linebreak $G = \pi_1(X)$.
    Suppose that $G$ comes with a descending chain of subgroups
    \[
      G = G_0 \supseteq G_1 \supseteq G_2 \supseteq \cdots
    \]
    such that $G_i$ is normal in $G$, the index $[G:G_i]$ is finite, and we have
    $\bigcap_{i \ge 0} G_i = \{1\}$.

    Then $G_i\backslash \widetilde{X} \to X$ is a finite $[G:G_i]$-sheeted covering, and we
    get for $n \ge 0$
    \[
      b_n^{(2)}(\widetilde{X} ) = \lim_{i \to \infty} \frac{b_n(G_i\backslash
        \widetilde{X})}{[G:G_i]},
    \]
    where $b_n(G_i\backslash \widetilde{X})$ is the classical $n$th Betti number of the finite
    $CW$-complex $G_i\backslash \widetilde{X}$.
  \end{enumerate}
  \end{theorem}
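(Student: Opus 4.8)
Almost every assertion of Theorem~\ref{thm:main_properties_of_rho2(widetildeX)} is harvested from the cited literature, and the organizing principle for a uniform treatment is to pass everywhere to the abelian category of $\caln(G)$-modules and exploit the formal properties of $\dim_{\caln(G)}$ recorded above (additivity on short exact sequences, cofinality, the continuity property) together with the behaviour of the Fuglede--Kadison determinant under the constructions involved. Thus homotopy invariance of the $b_n^{(2)}$ follows because $\caln(G)\otimes_{\IZ G}-$ carries a $\IZ\pi$-chain homotopy equivalence to an $\caln(\pi)$-chain homotopy equivalence; the determinant-class and $\rho^{(2)}$-invariance statements use the (simple-)homotopy invariance of the Fuglede--Kadison torsion, where the Determinant Conjecture or the $K$-theoretic Farrell--Jones Conjecture is invoked to kill the $\operatorname{Wh}$-indeterminacy; the Euler--Poincar\'e and product formulas are the usual Euler-characteristic and Künneth bookkeeping with real dimensions; the sum formula, mapping-torus formula, $S^1$- and aspherical cases come from the Mayer--Vietoris, Wang, and fibration long exact sequences of $\caln(\pi)$-modules plus additivity; multiplicativity under a $d$-sheeted covering is restriction/induction along a finite-index subgroup; Poincar\'e duality comes from the $(-)^{\ast}$-duality on the $\caln(\pi)$-chain level; and the hyperbolic-manifold assertions are the analytic computations of Lott, Mathai, Hess--Schick and the evaluation of the $L^2$-torsion of $\IH^{2m+1}$, which I would simply quote. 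I would verify these in the order listed. The only item that is not bookkeeping plus a citation is the last one, the approximation of $b_n^{(2)}(\widetilde X)$ by normalized ordinary Betti numbers, so I sketch that.

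First I would reduce to a single operator. For each degree $n$ let $\Delta_n = c_{n+1}c_{n+1}^{\ast}+c_n^{\ast}c_n$ be the combinatorial Laplacian of the finite free $\IZ G$-chain complex $C_*(\widetilde X)$; it is a self-adjoint positive matrix over $\IZ G$, and Hodge theory gives $b_n^{(2)}(\widetilde X)=\dim_{\caln(G)}\ker(\Delta_n)$ and $b_n(G_i\backslash\widetilde X)=\dim_{\IC}\ker(\Delta_n^{(i)})$, where $\Delta_n^{(i)}$ is the matrix over $\IC[G/G_i]=\caln(G/G_i)$ obtained by reducing the coefficients modulo $G_i$. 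Since $\dim_{\IC}\ker(\Delta_n^{(i)})=[G:G_i]\cdot\dim_{\caln(G/G_i)}\ker(\Delta_n^{(i)})$, writing $F(\lambda)$, $F_i(\lambda)$ for the spectral density functions (in the sense of Subsection~\ref{subsec:Fuglede-Kadison_determinants}) of $\Delta_n$ over $\caln(G)$ and of $\Delta_n^{(i)}$ over $\caln(G/G_i)$, the claim becomes $F(0)=\lim_{i\to\infty}F_i(0)$.

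Next I would prove weak convergence of the spectral measures. Because $\bigcap_{i}G_i=\{1\}$ and the chain is descending, for any $u=\sum_g a_g g\in\IZ G$ one has $\tr_{\caln(G/G_i)}(u\bmod G_i)=\sum_{g\in G_i}a_g$, which equals $a_1=\tr_{\caln(G)}(u)$ once $i$ is large enough that no nontrivial element of the finite support of $u$ lies in $G_i$. Applying this to the diagonal entries of $\Delta_n^p$ shows $\int\lambda^p\,dF_i\to\int\lambda^p\,dF$ for every $p\ge 0$. All these measures have the same total mass (the number of $n$-cells) and are supported in a common interval $[0,K]$, because the operator norm of $\Delta_n^{(i)}$ is bounded by a constant depending only on the $\ell^1$-norms of the entries of $\Delta_n$. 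Convergence of all moments on a fixed compact interval forces weak convergence $dF_i\to dF$, whence $\limsup_i F_i(\lambda)\le F(\lambda)$ for closed half-lines $[0,\lambda]$ and $\liminf_i dF_i([0,\lambda))\ge dF([0,\lambda))$; letting $\lambda\downarrow 0$ and using right-continuity $F(\lambda)\to F(0)$ yields $\limsup_i F_i(0)\le F(0)$.

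The hard part — and the only place integrality is used — is the reverse inequality, i.e.\ showing that no spectrum escapes to $0$ in the limit. Over the finite group $G/G_i$ the operator $\Delta_n^{(i)}$ is, in the regular-representation basis, an integer matrix, so the product of its nonzero eigenvalues $\lambda_1,\dots,\lambda_r$ (with multiplicities $\mu_j$) is a nonzero integer and hence $\ge 1$; equivalently $\det_{\caln(G/G_i)}(\Delta_n^{(i)})\ge 1$, which gives the uniform bound $\int_{0+}^{\infty}\ln\lambda\,dF_i\ge 0$, and then $\int_{(0,1]}\ln\lambda\,dF_i\ge -\,m\ln K=:-C$ with $C$ independent of $i$ (here $m$ is the number of $n$-cells). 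A standard logarithmic estimate now gives, for $0<\epsilon<1$, the uniform bound $dF_i\bigl((0,\epsilon]\bigr)\le C/\ln(1/\epsilon)$, so that $F_i(0)\ge dF_i\bigl([0,\epsilon)\bigr)-C/\ln(1/\epsilon)$; taking $\liminf_i$ and using $\liminf_i dF_i([0,\epsilon))\ge dF([0,\epsilon))\ge F(0)$ and then $\epsilon\downarrow 0$ gives $\liminf_i F_i(0)\ge F(0)$. Combining the two inequalities yields $F(0)=\lim_i F_i(0)$, which is the approximation formula. I expect the determinant lower bound in this last paragraph to be the conceptual crux: it is exactly the finite-group instance of the Determinant Conjecture~\ref{con:Determinant_Conjecture}, and it is what makes the statement fail for non-integral (e.g.\ generic real) coefficients.
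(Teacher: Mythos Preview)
Your proposal is correct. The paper itself does not supply a proof of Theorem~\ref{thm:main_properties_of_rho2(widetildeX)} at all: each item is stated with a pointer to the literature (mostly to~\cite{Lueck(2002)}), and the reader is referred there. Your treatment does the same for items (1)--(13), with an accurate one-line indication of the mechanism behind each, and then gives a self-contained sketch of item~(14). That sketch follows exactly the argument of the cited source~\cite{Lueck(1994c)} (and~\cite[Chapter~13]{Lueck(2002)}): reduction to the combinatorial Laplacian, convergence of traces of powers from $\bigcap_i G_i=\{1\}$ giving weak convergence of spectral measures on $[0,K]$, the Portmanteau bound $\limsup_i F_i(0)\le F(0)$, and the crucial uniform logarithmic estimate $dF_i((0,\epsilon])\le C/\ln(1/\epsilon)$ coming from the integrality of $\det_{\IC}$ of the reduced Laplacians, which yields $\liminf_i F_i(0)\ge F(0)$. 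Your identification of this last step with the finite-group case of the Determinant Conjecture~\ref{con:Determinant_Conjecture} is also on the mark and is precisely how the general Approximation Conjecture~\ref{con:Approximation_conjecture_for_L2-Betti_numbers} is deduced from it in Remark~\ref{rem:The_Determinant_Conjecture_implies_the_Approximation_Conjecture_for_L2-Betti_numbers}. One cosmetic point: in your weak-convergence paragraph you do not actually need to let $\lambda\downarrow 0$ to obtain $\limsup_i F_i(0)\le F(0)$; Portmanteau applied directly to the closed set $\{0\}$ already gives it.
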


  We refer for more information about $L^2$-invariants and their applications to algebra,
  geometry, group theory, index theory, operator algebras, topology, and $K$-theory for
  instance to~\cite{Kammeyer(2019)} and~\cite{Lueck(2002)}. We will discuss some of them later in this article.

  %%%%%%%%%%%%%%%%%%%%%%%%%%%%%%%%%%%%%%%%%%%%%%%%%%%%%%%%%%%%%%%%%%%%%%%%%%%%%%%%%
  %%%%%%%%%%%%%%%%%%%%%%%%%%%%%%%%%%  Section 3: The Atiyah Conjecture   %%%%%%%%%%%%%%%%%%%%%%
  %%%%%%%%%%%%%%%%%%%%%%%%%%%%%%%%%%%%%%%%%%%%%%%%%%%%%%%%%%%%%%%%%%%%%%%%%%%%%%%%%

  \typeout{------------------------ Section 3: The Atiyah Conjecture  --------------------------------}

  \section{The Atiyah Conjecture}\label{sec:The_Atiyah_Conjecture}

  %-----------------------------------------------------------------------------------------

  In this section we discuss one of the most prominent conjectures in the field of $L^2$-invariants, the Atiyah Conjecture.
  It predicts the possible values of $L^2$-betti numbers and has many interesting implications.
  For example Kaplansky's Zero Divisor Conjecture for rational group rings follows from it.

  \subsection{Statement of the Atiyah Conjecture}%
  \label{subsec:Statement_of_the_Atiyah_Conjecture}

  \begin{conjecture}[Atiyah Conjecture]\label{con:Atiyah_Conjecture}
    Consider a field $F$ with $\IQ \subseteq F \subseteq \IC$.
    Let $G$ be a group possessing an upper bound on the orders of its finite subgroups.
    Let $\lcm(G)$ be the natural number given by the least common multiple of the orders of its finite subgroups.

    We say that  $G$ satisfies the \emph{Atiyah Conjecture with coefficients in $F$},
   if for any  finitely presented $FG$-module $M$ the von Neumann dimension satisfies
   \[\lcm(G) \cdot \dim_{\caln(G)}(\caln(G) \otimes_{FG} M) \in \IZ.
   \]

  \end{conjecture}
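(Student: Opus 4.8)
The plan is not to give a complete proof, since the Atiyah Conjecture is open in general, but to lay out the strategy — essentially due to Linnell — that establishes it for a large class of groups, and to pinpoint where it breaks down. The starting point is the reformulation in terms of the ring $\calu(G)$ of (unbounded) operators affiliated to $\caln(G)$: this is a von Neumann regular ring containing $\caln(G)$, hence $FG$; the dimension function extends to all $\calu(G)$-modules; and since $\calu(G)$ is the Ore localization of $\caln(G)$, one has $\dim_{\caln(G)}(\caln(G) \otimes_{FG} M) = \dim_{\calu(G)}(\calu(G) \otimes_{FG} M)$ for every $FG$-module $M$. For a finitely presented $M = \coker(A)$ with $A$ a matrix over $FG$, the continuity property of the dimension function (property (4) recalled in Section~\ref{subsec:Group_von_Neumnan_algebras_and_their_dimension_functions}) identifies this von Neumann dimension with $m$ minus the rank of $A$ computed over the \emph{division closure} $\cald(FG)$ of $FG$ inside $\calu(G)$, where $m$ is the number of rows of $A$. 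It therefore suffices to show that $\cald(FG)$ is semisimple Artinian and that the (reduced, i.e.\ $\caln(G)$-) dimensions of its simple modules lie in $\tfrac{1}{\lcm(G)}\IZ$; semisimplicity forces $\rk_{\cald(FG)}(A)$ to be a nonnegative combination of these, and hence $\dim_{\caln(G)}(\caln(G)\otimes_{FG} M)\in \tfrac{1}{\lcm(G)}\IZ$ for every finitely presented $M$.

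First I would verify the base cases. For finite $G$ one has $\caln(G) = \IC G$, $\dim_{\caln(G)} = \tfrac{1}{|G|}\dim_{\IC}$, and $\lcm(G) = |G|$, so the claim is immediate. For free $G$ one invokes Linnell's theorem that $\cald(FG)$ is a skew field (here $\lcm(G)=1$), which rests on structural properties of $FG$ — it is a free ideal ring — together with Cohn localization. Next I would prove the two closure properties that bootstrap from these building blocks: (a) if $1 \to H \to G \to Q \to 1$ with $H$ in the class and $Q$ elementary amenable with a uniform bound on the orders of its finite subgroups, then $G$ is again in the class — proved by transfinite induction along the elementary amenable hierarchy, identifying $\calu(G)$ with a crossed product of $\calu(H)$ by $Q$ and taking Ore localizations at each step; (b) the class is closed under directed unions of subgroups, which follows from the cofinality property of the dimension function (property (3) in Section~\ref{subsec:Group_von_Neumnan_algebras_and_their_dimension_functions}). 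Assembling these gives the conjecture for Linnell's class (elementary amenable extensions of free groups, closed under directed unions), and subsequent work — e.g.\ of Jaikin-Zapirain and of others, together with inputs from the Farrell-Jones philosophy — pushes the class considerably further.

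The main obstacle, and the reason the conjecture remains open, is step (a) once torsion is present, and more fundamentally the absence of any general device for proving $\cald(FG)$ semisimple when $G$ is not assembled from free groups by such extensions — for instance when its torsion is not compatible with any approximation by finite quotients. That the bounded-torsion hypothesis is essential is shown by the counterexamples of Austin and of Grabowski: dropping it (allowing infinitely generated torsion, as in suitable lamplighter-type groups) produces irrational and even transcendental $L^2$-Betti numbers. So any correct proof must use the uniform bound $\lcm(G)$ in an essential way, and I would expect the hard part to be controlling $\cald(FG)$ across a single extension step in the presence of finite subgroups; I do not expect the general conjecture to fall out of this plan.
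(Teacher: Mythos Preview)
This statement is a \emph{conjecture}, not a theorem, and the paper does not prove it; it states it, gives equivalent reformulations, and surveys its status. You correctly recognise this and instead outline Linnell's strategy, which is precisely what the paper sketches in Subsections~\ref{subsec:Status_of_the_Atiyah_Conjecture}--\ref{subsec:The_Algebraic__Atiyah_Conjecture}. Your account of the base cases (finite and free groups), the bootstrap via elementary amenable extensions and directed unions to the class~$\calc$, and the Austin--Grabowski counterexamples when the bound on finite subgroups is dropped all match the paper's discussion.

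One point of phrasing worth tightening: you reduce the conjecture to ``$\cald(FG)$ is semisimple Artinian and the $\caln(G)$-dimensions of its simple modules lie in $\tfrac{1}{\lcm(G)}\IZ$''. The paper formulates Linnell's program slightly differently, as two pieces: a ring-theoretic part (semisimplicity of $\cald(G)$) and a $K$-theoretic part (surjectivity of $\colim_{|H|<\infty} K_0(FH) \to K_0(\cald(G))$). Your dimension condition on simple modules is a consequence of this $K$-theoretic surjectivity, but the paper's formulation makes clearer where tools like the Farrell--Jones Conjecture enter. Also, your appeal to the continuity property to identify the von Neumann dimension with $m$ minus a rank over $\cald(FG)$ is not quite self-contained: that identification already presupposes enough structure on $\cald(FG)$ (e.g.\ semisimplicity, or at least flatness over $FG$) and is part of what Linnell's program establishes rather than an a priori input. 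These are refinements of exposition rather than gaps; the overall shape of your outline is correct and agrees with the paper.
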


  Note that $G$ is torsionfree if and only if $\lcm(G) = 1$, so the Atiyah conjecture in this case predicts integrality of $L^2$-Betti numbers.
  For a field $F$, let us denote by $\cala_F$ the collection of all groups which admit a bound on the order of their finite subgroups and satisfy the Atiyah conjecture with coefficients in $F$.
   Obviously
  $\cala_{\IC} \subseteq \cala_F \subseteq \cala_{\IQ}$.

  \begin{remark}[Equivalent formulations of the Atiyah Conjecture~\ref{con:Atiyah_Conjecture}]%
    \label{rem:equivalent_formulation_of_the_Atiyah_Conjecture}
    Consider a field $F$ with $\IQ \subseteq F \subseteq \IC$.
    Let $G$ be a group that possesses an upper bound on the orders of is
    finite subgroups. Then the  following assertions are equivalent:
    \begin{itemize}
    \item[(1)]
    We have $G \in \cala_F$;

  \item[(2)]
    For any matrix $A \in M_{m,n}(FG)$ the dimension $\dim_{\caln(G)}(\ker(r_A))$ of
    the kernel of the $\caln(G)$-homomorphism $r_A\colon \caln(G)^m \to \caln(G)^n$ given by
    right multiplication with $A$ satisfies $\lcm(G) \cdot \dim_{\caln(G)}(\ker(r_A)) \in \IZ$;

  \item[(3)]
    For any $F G$-chain complex $C_*$ of finitely generated free $F G$-modules and all $n \in \IZ$ we have
    \[
   \lcm(G) \cdot  \dim_{\caln(G)}(H_n(\caln(G) \otimes_{F G} C_*)) \in \IZ.
  \]

  \end{itemize}

  If we additionally assume $F = \IQ$, then the
  three assertions above and the following assertion are equivalent:

  \begin{itemize}

  \item[(4)]
    For any $\IZ G$-chain complex $C_*$ of finitely generated free $\IZ G$-modules and all $n \in \IZ$ we have
    \[
   \lcm(G) \cdot   \dim_{\caln(G)}(H_n(\caln(G) \otimes_{\IZ G} C_*)) \in \IZ.
  \]
  \end{itemize}

  If we additionally assume $F = \IQ$ and  that $G$ is  finitely presented, then the
  four  assertions above and the following assertion are equivalent:

  \begin{itemize}
  \item[(5)] For any closed manifold $M$ with $\pi_1(M) \cong G$ and any $n \ge 0$ we have
  \[
  \lcm(G) \cdot  b_n^{(2)}(\widetilde{M}) \in \IZ.
   \]
  \end{itemize}
  All of these claims follow from from~\cite[Lemma~10.5 on page~371 and Lemma~10.7 on
  372]{Lueck(2002)}, the equality $\dim_{\caln(G)}(\caln(G)) = 1$, and the Additivity of the
  dimension function $\dim_{\caln(G)}$.
  \end{remark}

  The Atiyah Conjecture~\ref{con:Atiyah_Conjecture} is rather surprizing in view
  of~\eqref{L_upper_2-Betti_number_and_heat_kernel} and assertion~(5) appearing in
  Remark~\ref{rem:equivalent_formulation_of_the_Atiyah_Conjecture}, since there seems to be
  no reason why the expression appearing on the right hand side
  of~\eqref{L_upper_2-Betti_number_and_heat_kernel} should be an integer if $\pi_1(M)$ is
  torsionfree.

  %-----------------------------------------------------------------------------------------

  \subsection{Status of the Atiyah Conjecture}%
  \label{subsec:Status_of_the_Atiyah_Conjecture}

  The notions of elementary amenable groups and amenable groups are explained for instance
  in~\cite[Subsection~6.4.1]{Lueck(2002)}.

  \begin{definition}[Class of groups $\calc$ ]\label{not:class_of_groups_calc}
    Let $\calc$ be the smallest class of groups satisfying the following conditions:

    \begin{enumerate}
    \item $\calc$ contains all free groups;

    \item If $\{G_i \mid i \in I\}$ is a directed system of subgroups directed by inclusion
      such that each $G_i$ belongs to $\calc$, then $G = \bigcup_{i \in I} G_i$ belongs to $\calc$;

  \item Let $1 \to K \to G \to Q \to 1$ be an extension of groups such that
    $H$ belongs to $\calc$ and $Q$ is elementary amenable, then $G$ belongs to $\calc$.

  \end{enumerate}
\end{definition}

  \begin{definition}[Class of groups $\cald$ ]\label{not:class_of_groups_cald}

    Let $\cald$ be the smallest class of groups satisfying the following conditions:

    \begin{enumerate}

    \item\label{not:class_of_groups_cald_trivial} The trivial group belongs to $\cald$;

    \item\label{not:class_of_groups_cald:colimits} If $\{G_i : i \in I\}$ is a filtered
      system of groups in $\cald$ (with arbitrary structure maps), then its colimit again
      belongs to $\cald$;

    \item\label{not:class_of_groups_cald:limits} If $\{G_i : i \in I\}$ is a cofiltered
      system of groups in $\cald$ (with arbitrary structure maps), then its limit again
      belongs to $\cald$;

    \item\label{not:class_of_groups_cald;subgroups} If $G$ belongs to $\cald$ and
      $H \subseteq G$ is a subgroup, then $H \in \cald$;

    \item\label{not:class_of_groups_cald_elem_amen} If $p\colon G \to A$ is an epimorphism
      of a torsionfree group $G$ onto an elementary amenable group $A$ and if
      $p^{-1}(B) \in \cala_F$ for every finite group $B \subset A$, then $G \in \cala_F$.

      \end{enumerate}
  \end{definition}

  Note that each element in $\cald$ is a torsionfree group, and the class $\cald$ contains
  all residually (torsionfree elementary amenable) groups.

  A group is called \emph{locally indicable}, if every non-trivial finitely generated
  subgroup admits an epimorphism onto $\IZ$. Locally indicable groups are torsionfree.
  Examples for locally indicable groups are one-relator groups.

  \begin{theorem}[Status of the Atiyah Conjecture~\ref{con:Atiyah_Conjecture} with
    coefficients in $F$]%
    \label{the:status_of_Atyah_Conjecture} Consider a field $F$ with
    $\IQ \subseteq F \subseteq \IC$.

    \begin{enumerate}

    \item\label{the:Status_of_the_Atiyah_Conjecture:Linnell} If $G$ belongs to $\calc$ and
      possesses an upper bound on the orders of is finite subgroups, then $G \in \cala_F$;

    \item\label{the:status_of_Atyah_Conjecture:cald}
     If $G$ belongs to $\cald$, then $G$ is torsionfree and $G \in \cala_F$;

   \item\label{the:status_of_Atyah_Conjecture:locally_indicable} If
     $1 \to H \to G \to Q \to 1$ is an extension of groups, $H$ is torsionfree and belongs
     to $\cala_F$, and $Q$ is locally indicable, then $G$ is torsionfree and belongs to
     $\cala_F$;

    \item\label{the:status_of_Atyah_Conjecture:subgroups} If $G \in\cala_F$ and
      $H \subseteq G$ is a subgroup, then
      $H \in \cala_F$;

   \item\label{the:status_of_Atyah_Conjecture:directed_unions}
     If G is the directed union $\bigcup_{i \in I} G_i$ of subgroups $G_i$ directed by inclusion
     and each $G_i$ belongs to $\cala_F$, then $G$ belongs to $\cala_F$;

   \item\label{the:status_of_Atyah_Conjecture:fin_gen} The group $G$ belongs to $\cala_F$
     if and only if all its finitely generated subgroups belong to $\cala_F$;

  \item\label{the:status_of_Atyah_Conjecture:finite_quotients}
   If $1 \to K \to G \to Q \to 1$ is an
      extension of groups such that $K$ is finite and $G$ belongs to $\cala_F$, then $Q$
      belongs to $\cala_F$;

   \item\label{the:status_of_Atyah_Conjecture:pi_low} Let $M$ be a connected (not
     necessarily compact) $d$-dimensional manifold (possibly with non-empty boundary) such
     that $d \le 3$ and its fundamental group $\pi_1(M)$ is torsionfree, then
     $\pi_1(M) \in \calc$ and hence $\pi_1(M) \in \cala_F$;

   \item\label{the:status_of_Atyah_Conjecture:special_groups} If the group $G$ possesses an
     upper bound on the orders of its finite subgroups and belongs to one of the following
     classes below, then $G$ belongs to $\cala_F$:
     \begin{enumerate}
     \item Residually \{torsionfree elementary amenable\} groups;
     \item Free by elementary amenable groups;
     \item Braid groups;
     \item Right-angled Artin and Coxeter groups;
     \item Torsionfree $p$-adic analytic pro-$p$-groups;
     \item Locally indicable groups;
     \item One-relator groups.
     \end{enumerate}
   \end{enumerate}
  \end{theorem}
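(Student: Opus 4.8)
This is a "status" theorem collecting many known results, so the proof will be a compilation of citations and short reductions rather than a single argument. The plan is to prove the structural assertions (4)–(7) first, since they are the engine driving everything else, and then deduce (1)–(3) and (8)–(9) by combining them with the deepest input, Linnell's theorem. First I would recall that assertion (4) — passage to subgroups — follows from induction: if $H \subseteq G$ and $A \in M_{m,n}(FH) \subseteq M_{m,n}(FG)$, then $\caln(H) \otimes_{FH} (FH)^k$ embeds compatibly into $\res^G_H \caln(G) \otimes_{FG} (FG)^k$ and the von Neumann dimensions agree up to the index-normalization, so the integrality of $\lcm(G)\cdot\dim$ forces that of $\lcm(H)\cdot\dim$ once one checks $\lcm(H) \mid \lcm(G)$, which is immediate since finite subgroups of $H$ are finite subgroups of $G$; this uses only the equivalent formulation (2) of Remark \ref{rem:equivalent_formulation_of_the_Atiyah_Conjecture} and the restriction behaviour of $\dim_{\caln(-)}$. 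Assertion (5), directed unions, follows from the cofinality property (item (3) of the dimension function) together with the fact that a matrix over $FG$ already lives over some $FG_i$; assertion (6) is then formal from (4) and (5). Assertion (7) — dividing out a finite normal subgroup — is exactly \cite[Theorem~10.19 or similar]{Lueck(2002)} and follows by an induction/restriction argument along $\IC G = \IC Q \otimes$ (something), so I would just cite it.

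Next I would establish assertion (1), the Linnell-type statement: if $G \in \calc$ with a bound on finite subgroup orders, then $G \in \cala_F$. The key point is that $\cala_\IC \subseteq \cala_F \subseteq \cala_\IQ$ (noted after Conjecture \ref{con:Atiyah_Conjecture}), so it suffices to treat $F = \IC$, and for $F = \IC$ this is precisely Linnell's theorem as extended by Schick and others, which one cites. Assertion (2), the class $\cald$: here I would argue by induction over the five defining properties of $\cald$ in Definition \ref{not:class_of_groups_cald}. The base case (trivial group) is clear; closure under filtered colimits uses assertion (5) (or rather a colimit version of it — for colimits with arbitrary structure maps one writes the colimit as a directed union of images, noting torsionfreeness is preserved); closure under cofiltered limits uses assertion (4) since a limit embeds into a product and finitely generated subgroups of the limit map to subgroups of the factors; closure under subgroups is assertion (4) again; and the last clause is built into the definition of $\cald$ to match assertion (2) exactly, so it is a tautology at that stage (the real content is that $\cald$ is closed under the operations, combined with clause (5) of Definition~\ref{not:class_of_groups_cald} which literally asserts membership in $\cala_F$). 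Torsionfreeness is the remark immediately following Definition \ref{not:class_of_groups_cald}.

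For assertion (3), the locally indicable extension result, I would cite the theorem of Linnell–Schick or Jaikin-Zapirain–López-Álvarez that locally indicable groups, and more generally such extensions of $\cala_F$-groups by locally indicable quotients, stay in $\cala_F$; the proof of that result itself goes through the Hughes-free embedding of the Malcev–Neumann type skew field and is far too involved to reproduce, so a citation is the honest move. Finally, assertion (8) follows from assertion (1) together with the Scott–Shalen / 3-manifold structure theory showing $\pi_1$ of a $3$-manifold (torsionfree) lies in $\calc$ — one reduces via geometrization and the fact that fundamental groups of Haken pieces and Seifert pieces are built from free and elementary amenable pieces — and assertion (9) is a list, each entry of which is a separate published theorem: (9a) is subsumed in (2) or cited directly; (9b), free-by-elementary-amenable, follows from (1) since such groups lie in $\calc$; (9c) braid groups and (9d) RAAGs/RACGs reduce to the residually-torsionfree-nilpotent (hence residually $\cald$) case, invoking (2) and (6); (9e) is a theorem on $p$-adic analytic pro-$p$ groups; and (9f)–(9g) follow from (3) since one-relator groups are locally indicable (Brodskii). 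The main obstacle is that the theorem is not really "proved" from scratch at all — the genuinely hard inputs (Linnell's theorem for (1), the Hughes-free machinery for (3), 3-manifold geometrization for (8)) are deep external results, so the burden of the write-up is organizing the reductions cleanly and verifying the bookkeeping with $\lcm(G)$ and the normalization of $\dim_{\caln(G)}$ under restriction, rather than any single new argument.
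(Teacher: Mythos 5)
Your overall plan---compile citations for the deep inputs and deduce the rest from the structural assertions (4)--(7)---matches the paper's approach at a high level, and your treatments of (4), (5), (6), (7), (1), (3), (9b), (9f), (9g) are essentially what the paper does (the paper cites~\cite[Theorem~6.29~(2)]{Lueck(2002)},~\cite[Lemma~10.4]{Lueck(2002)},~\cite[Lemma~13.45]{Lueck(2002)}, Linnell~\cite{Linnell(1993)}, and Jaikin-Zapirain--L\'opez-\'Alvarez~\cite[Proposition~6.5]{Jaikin-Zapirain+Lopez-Alvarez(2020)}, respectively). Your citation for (7) as ``Theorem~10.19'' is a slip---that is Linnell's theorem---the correct reference is~\cite[Lemma~13.45 on page~473]{Lueck(2002)}.

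However, there is a genuine gap in your treatment of assertion~(2). You propose to prove it by induction over the five defining operations of $\cald$ in Definition~\ref{not:class_of_groups_cald}, reducing the two limit clauses to (4) and (5). This fails. For closure of $\cala_F$ under filtered colimits with \emph{arbitrary} structure maps, you suggest writing the colimit as a directed union of images; but the image of $G_i$ is a quotient of $G_i$ by a kernel that need not be finite, and $\cala_F$ is not known to be closed under arbitrary quotients (only quotients with finite kernel, assertion~(7)). For closure under cofiltered limits, you suggest embedding the limit into a product and invoking (4); but $\cala_F$ is not known to be closed under infinite direct products, and it is also false that a finitely generated subgroup of the limit embeds into a single factor. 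The whole point is that closure of $\cala_F$ under cofiltered limits is a deep approximation result (L\"uck approximation in the spirit of Dodziuk--Linnell--Mathai--Schick--Yates), and the correct move, which the paper makes, is to cite~\cite[Corollary~1.2]{Jaikin-Zapirain(2019)} directly as a black box, together with~\cite[Theorem~1.4]{Dodziuk-Linnell-Mathai-Schick_Yates(2003)}.

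There is a second gap in your treatment of assertion~(8). You claim that geometrization together with Scott--Shalen shows that torsionfree $3$-manifold groups lie in $\calc$ because the pieces are ``built from free and elementary amenable pieces.'' This is not correct for the hyperbolic pieces: the fundamental group of a closed hyperbolic $3$-manifold is not a free-by-(elementary amenable) group, nor is it obviously obtained from such by the operations defining $\calc$. Proving that torsionfree $3$-manifold groups lie in $\calc$ is the content of the recent theorem of Kielak--Linton~\cite[Theorem~1.1]{Kielak-Linton(2023Atiyah)}, which relies on the Agol--Wise virtual fibering machinery; this is what the paper cites. (Your reduction of $d=2$ to $d=3$ by crossing with $S^1$ and using (4) would be fine once $d=3$ is in place, and indeed that is the paper's argument.)
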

  \begin{proof}~\eqref{the:Status_of_the_Atiyah_Conjecture:Linnell} This is due to Linnell,
    see for instance~\cite{Linnell(1993)} or~\cite[Theorem~10.19 on page~378]{Lueck(2002)}.
    \\[1mm]~\eqref{the:status_of_Atyah_Conjecture:cald}
    This follows from~\cite[Corollary~1.2]{Jaikin-Zapirain(2019)}, which is based on
    on~\cite[Theorem~1.4]{Dodziuk-Linnell-Mathai-Schick_Yates(2003)}.
    \\[1mm]~\eqref{the:status_of_Atyah_Conjecture:locally_indicable} This follows
    from~\cite[Proposition~6.5]{Jaikin-Zapirain+Lopez-Alvarez(2020)}.
    \\[1mm]~\eqref{the:status_of_Atyah_Conjecture:subgroups} This follows
    from~\cite[Theorem~6.29~(2) on page~253]{Lueck(2002)}.
    \\[1mm]~\eqref{the:status_of_Atyah_Conjecture:directed_unions}
    See~\cite[Lemma~10.4 on  page~371]{Lueck(2002)}.
    \\[1mm]~\eqref{the:status_of_Atyah_Conjecture:fin_gen} This
    follows from assertions~\eqref{the:status_of_Atyah_Conjecture:subgroups}
    and~\eqref{the:status_of_Atyah_Conjecture:directed_unions}.
    \\[1mm]~\eqref{the:status_of_Atyah_Conjecture:finite_quotients} This follows
    from\cite[Lemma~13.45 on page~473]{Lueck(2002)}.
    \\[1mm]~\eqref{the:status_of_Atyah_Conjecture:pi_low} This follows
    from~\cite[Theorem~1.1]{Kielak-Linton(2023Atiyah)} for $d = 3$. The case $d = 2$ can be
    reduced to the case $d =3$ by crossing with $S^1$ and
    assertion~\eqref{the:status_of_Atyah_Conjecture:subgroups}.
    \\[1mm]~\eqref{the:status_of_Atyah_Conjecture:special_groups} This follows from other assertions
    or from~\cite[Theorem~1.1 and Corollary~1.2]{Jaikin-Zapirain(2019)} using~\cite[Theorem~2]{Linnell-Okun-Schick(2012)}
    and~\cite[Theorem~1.1]{Farkas-Linnell(2006)}.
  \end{proof}

  \begin{remark}\label{rem:Atiyah_for_hyperbolic_groups}
    The class $\cala_F$ is very large by aforementioned results. Nevertheless, we do not know whether the
    Atiyah Conjecture~\ref{con:Atiyah_Conjecture} holds for all hyperbolic groups
    or for all amenable groups.
  \end{remark}

  There are partial results on the difficult question, whether the
  Atiyah Conjectures~\ref{con:Atiyah_Conjecture} holds for a group $G$ if it holds for a subgroup of finite
  index, see for instance~\cite{Linnell-Schick(2007)}.

  \begin{remark}\label{rem:Atiyah_without_bound_on_htr_orders_of_finite_subgroups}
  	Dropping the condition on an upper bound on the orders of finite subgroups, one might still ask if the $L^2$-Betti numbers are always rational. This goes back to Atiyah's original question~\cite[page~72]{Atiyah(1976)}, who asked for rationality of $L^2$-Betti numbers of closed manifolds.
    Austin~\cite[Corollary~1.2]{Austin(2013)} gave the first example of a finitely generated
    group $G$, where for some matrix $A \in M_{m,n}(\IQ G)$ the dimension
    $\dim_{\caln(G)}(\ker(r_A))$ of the kernel of the $\caln(G)$-homomorphism
    $r_A\colon \caln(G)^m \to \caln(G)^n$ is
    irrational. Grabowski~\cite[Theorem~1.3]{Grabowski(2014Turing)} proved, using Turing
    machines, that any non-negative real number can occur in this way for some finitely
    generated group $G$ and some matrix $A$.  L\"oh and Uschold~\cite{Loeh-Uschold(2022)}
    investigate the computability degree of real numbers arising as $L^2$-Betti numbers or
    $L^2$-torsion of groups, parametrised over the Turing degree of the word problem.
    Roughly speaking, the complexity of the computation of $L^2$-invariants of a group is the same as the complexity of the word problem.  This is due to the combinatorial computation in terms of characteristic sequences mentioned in Remark~\ref{rem:combinatorial_computation}.
  \end{remark}

  %-----------------------------------------------------------------------------------------

  \subsection{Embedding the group ring of a torsionfree group into a skewfield}%
  \label{subsec:Embedding_the_group_ring_of_a_torsionfree_group_into_a_skewfielde}

  Associated to the von Neumann algebra $\caln(G)$ is the algebra of affiliated operators
  $\calu(G)$ which contains $\caln(G)$.  It can be defined analytically or just as the Ore
  localization of $\caln(G)$ with respect to the multiplicative subset of non-zero
  divisors.  Now one can consider the so called division closure $\cald(G)$ of $\IC G$ in
  $\calu(G)$, i.e., the smallest ring $\IC G \subset \cald (G) \subset \calu(G)$ such that
  if $x \in \cald(G)$ is invertible in $\calu(G)$, its inverse is already contained in
  $\cald(G)$.

  The proof of the following is based on ideas of Peter Linnell from~\cite{Linnell(1993)}
  which have been elaborated and expanded on in~\cite[Theorem~8.29 on page~330 and Lemma~10.39 on
  page~388]{Lueck(2002)} and~\cite{Reich(2006)}, see
  also~\cite[Theorem~3.8~(1) and~{2}]{Friedl-Lueck(2019Thurston)}.  The proofs given in the references above
  for $F = \IC$ and $F = \IQ$ carry directly over to arbitrary $F$.

  \begin{theorem}[Main properties of $\cald(G)$]\label{the:Main_properties_of_cald(G)}
    Let $G$ be a torsionfree group. Consider a field $F$ with $\IQ \subseteq F \subseteq \IC$.

    \begin{enumerate}

    \item\label{the:Main_properties_of_cald(G):skew_field} The group $G$ satisfies the
      Atiyah Conjecture with coefficients in $F$ if and only if $\cald(G)$ is a skew
      field;

    \item\label{the:Main_properties_of_cald(G):dim} Suppose that $G$ satisfies the Atiyah
      Conjecture.  Let $C_*$ be a projective $F G$-chain complex. Then we get for all $n \ge 0$
      \[
        b_n^{(2)}\bigl(\caln(G) \otimes_{FG} C_*\bigr) =
        \dim_{\cald(G)}\bigl(H_n(\cald(G) \otimes_{\IQ G} C_*)\bigr).
      \]
      In particular $b_n^{(2)}\bigl(\caln(G) \otimes_{F G} C_*\bigr)$ is an integer or $\infty$.

    \end{enumerate}
  \end{theorem}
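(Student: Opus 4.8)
The plan is to carry out everything inside the ring $\calu(G)$ of affiliated operators, using the standard structural facts provided by Linnell's theory as developed in \cite{Lueck(2002)} (Chapters~8 and~10) and \cite{Reich(2006)}: $\calu(G)$ is the Ore localization of $\caln(G)$ at its set of non-zero-divisors, hence flat over $\caln(G)$, and it is von Neumann regular; moreover the dimension extends to a function $\dim_{\caln(G)}(-)$ on all $\calu(G)$-modules which is additive on short exact sequences, cofinal, faithful on finitely generated projective $\calu(G)$-modules, satisfies $\dim_{\caln(G)}(\calu(G))=1$, and is unchanged by $\calu(G)\otimes_{\caln(G)}-$. Throughout, $\cald(G)$ denotes the division closure of $FG$ in $\calu(G)$ (for $F=\IC$ this is the ring introduced before the statement), and $b_n^{(2)}(\caln(G)\otimes_{FG}C_*)$ abbreviates $\dim_{\caln(G)}(H_n(\caln(G)\otimes_{FG}C_*))$.

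The computational heart of~(2), which also gives one implication of~(1), is the following, valid whenever $\cald(G)$ is a skew field. Let $C_*$ be a projective $FG$-chain complex. Then $\cald(G)\otimes_{FG}C_*$ is a complex of free $\cald(G)$-modules (each term is projective over the skew field $\cald(G)$), hence chain homotopy equivalent to its homology regarded as a complex with zero differentials; applying the additive functor $\calu(G)\otimes_{\cald(G)}-$ gives $H_n(\calu(G)\otimes_{FG}C_*)\cong\calu(G)\otimes_{\cald(G)}H_n(\cald(G)\otimes_{FG}C_*)$. Flatness of $\calu(G)$ over $\caln(G)$ gives $H_n(\calu(G)\otimes_{FG}C_*)\cong\calu(G)\otimes_{\caln(G)}H_n(\caln(G)\otimes_{FG}C_*)$. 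Applying $\dim_{\caln(G)}$ and using invariance under $\calu(G)\otimes_{\caln(G)}-$, the second description computes $b_n^{(2)}(\caln(G)\otimes_{FG}C_*)$; for the first, if $V$ is a $\cald(G)$-vector space of dimension $\kappa$ then $\calu(G)\otimes_{\cald(G)}V\cong\calu(G)^{(\kappa)}$ has $\dim_{\caln(G)}$ equal to $\kappa$ — by additivity and $\dim_{\caln(G)}(\calu(G))=1$ if $\kappa$ is finite, by cofinality applied to $\calu(G)^{(\kappa)}$ as a directed union of finitely generated free submodules if $\kappa$ is infinite. Hence $b_n^{(2)}(\caln(G)\otimes_{FG}C_*)=\dim_{\cald(G)}(H_n(\cald(G)\otimes_{FG}C_*))$, which is~(2) (the ``in particular'' being immediate) as soon as $\cald(G)$ is known to be a skew field. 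Specializing to a complex $C_*$ of finitely generated free $FG$-modules, so that each $H_n(\cald(G)\otimes_{FG}C_*)$ is finite-dimensional over $\cald(G)$, we see that $\cald(G)$ being a skew field forces $\dim_{\caln(G)}(H_n(\caln(G)\otimes_{FG}C_*))\in\IZ$; since $G$ is torsionfree, $\lcm(G)=1$, and this is formulation~(3) of the Atiyah Conjecture in Remark~\ref{rem:equivalent_formulation_of_the_Atiyah_Conjecture}, so $G\in\cala_F$.

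The converse, ``$G\in\cala_F\Rightarrow\cald(G)$ is a skew field'', is the harder direction, and I would prove it via the rational closure $\calr$ of $FG$ in $\calu(G)$, that is, the set of all entries $(A^{-1})_{ij}$ where $A$ ranges over the square matrices over $FG$ that become invertible over $\calu(G)$. It is a subring of $\calu(G)$ containing $FG$ and closed under forming inverses of those of its elements that are invertible in $\calu(G)$; being such a ring it contains the division closure, so $FG\subseteq\cald(G)\subseteq\calr$. I claim the Atiyah Conjecture forces $\calr$ to be a skew field. Let $0\neq y=(A^{-1})_{ij}\in\calr$ with $A\in M_n(FG)\cap\GL_n(\calu(G))$, and put $B=\squarematrix{A}{e_j^{\top}}{e_i}{0}\in M_{n+1}(FG)$, where $e_k$ is the $k$-th standard basis vector. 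The block-triangular factorization, valid because $A$ is invertible over $\calu(G)$ and whose Schur complement is $0-e_iA^{-1}e_j^{\top}=-y$, gives $B=U\cdot\operatorname{diag}(A,-y)\cdot V$ with $U,V\in\GL_{n+1}(\calu(G))$; hence, writing $r_{(-)}$ for right multiplication, $\ker(r_B)\cong\ker(r_A)\oplus\ker(r_y)$ as $\calu(G)$-modules and $\ker(r_A)=0$. Using flatness and invariance of the dimension, $\dim_{\caln(G)}(\ker r_y)=\dim_{\caln(G)}(\ker r_B)=\dim_{\caln(G)}\bigl(\ker(r_B\colon\caln(G)^{n+1}\to\caln(G)^{n+1})\bigr)$, which lies in $\IZ$ by the Atiyah Conjecture applied to $B$ (formulation~(2) of Remark~\ref{rem:equivalent_formulation_of_the_Atiyah_Conjecture}, with $\lcm(G)=1$). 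On the other hand $y\neq0$ gives $\im(r_y)\neq0$, which is a finitely generated projective $\calu(G)$-module since $\calu(G)$ is von Neumann regular, so $\dim_{\caln(G)}(\im r_y)>0$ by faithfulness; additivity on $0\to\ker r_y\to\calu(G)\to\im r_y\to0$ and $\dim_{\caln(G)}(\calu(G))=1$ give $\dim_{\caln(G)}(\ker r_y)=1-\dim_{\caln(G)}(\im r_y)<1$. Being a non-negative integer, $\dim_{\caln(G)}(\ker r_y)=0$, so $\ker r_y=0$ by faithfulness; likewise $\dim_{\caln(G)}(\coker r_y)=0$ forces $\coker r_y=0$. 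Hence $r_y$ is bijective, $y\in\calu(G)^{\times}$, and $y^{-1}\in\calr$. So $\calr$ is a skew field; consequently, for $0\neq x\in\cald(G)\subseteq\calr$ we get $x^{-1}\in\calr\subseteq\calu(G)$, so $x$ is invertible in $\calu(G)$ and therefore $x^{-1}\in\cald(G)$, as $\cald(G)$ is division-closed in $\calu(G)$. Thus $\cald(G)$ is a skew field.

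The main obstacle lies not in these essentially formal manipulations but in the structural input about $\calu(G)$ recalled in the first paragraph — von Neumann regularity, flatness over $\caln(G)$, and the existence of a faithful, additive, cofinal dimension function invariant under $\calu(G)\otimes_{\caln(G)}-$ — which is exactly the package put together in \cite{Lueck(2002)} (Theorem~8.29 and Chapter~10) and \cite{Reich(2006)}; among the arguments themselves, the converse direction of~(1) via the rational closure is the one that requires care. Finally, nothing above uses the field $F$ beyond $\IQ\subseteq F\subseteq\IC$ (which is all that is needed for $FG\subseteq\caln(G)$ and for the equivalent formulations of the Atiyah Conjecture in Remark~\ref{rem:equivalent_formulation_of_the_Atiyah_Conjecture} to be available), so the proofs written for $F=\IQ$ and $F=\IC$ in the cited references apply verbatim for arbitrary $F$.
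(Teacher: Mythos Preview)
Your proof is correct and follows essentially the approach of the references the paper cites (Linnell~\cite{Linnell(1993)}, \cite[Theorem~8.29 and Lemma~10.39]{Lueck(2002)}, \cite{Reich(2006)}): the paper does not give its own argument but defers entirely to those sources, and what you have written is a faithful and careful rendering of that standard proof --- the flatness of $\calu(G)$ over $\caln(G)$, von Neumann regularity of $\calu(G)$, the extended dimension function, and the rational-closure/Schur-complement trick for the harder implication of~(1) are exactly the ingredients used there. Your closing remark that nothing depends on $F$ beyond $\IQ\subseteq F\subseteq\IC$ is precisely the observation the paper makes in lieu of a proof.
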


  Theorem~\ref{the:Main_properties_of_cald(G)} shows that the Atiyah
  Conjecture~\ref{con:Atiyah_Conjecture} is related to the question whether for a
  torsionfree group $G$ the group ring $FG$ can be embedded into a skew field, see for
  instance~\cite{Henneke-Kielak(2021)}.  Note  that the existence of
  an embedding of $FG$ into a skewfield implies that
  $FG$ has no non-trivial zero-divisors which is predicted by the  Zero-Divisor-Conjecture
  of Kaplansky.

  %-----------------------------------------------------------------------------------------

  \subsection{The Algebraic Atiyah Conjecture}%
  \label{subsec:The_Algebraic__Atiyah_Conjecture}

  Recall that Linnell's program to approach the Atiyah Conjecture consists of a $K$-theoretic part and a ring theoretic part, namely that
 the composite
 \begin{equation*}
   \colim_{H \subseteq G, |H| < \infty} K_0(FH) \to K_0(FG) \to K_0(\cald(G))
 \end{equation*}
 is surjective and
 that  $\cald(G)$ is semisimple, see~\cite[Lemma~10.28 on page~382 and Theorem~10.38 on page~387]{Lueck(2002)}.
 There is the following
  so called algebraic Atiyah Conjecture, which is a purely $K$-theoretic statement
  and taken from~\cite[Conjecture~7.2]{Jaikin-Zapirain(2017positive)},
  c,f.~\cite[Lemma~10.26 on page~382]{Lueck(2002)}. It implies the
  Atiyah Conjecture~\ref{con:Atiyah_Conjecture}, see~\cite[Lemma~10.26 on page~382]{Lueck(2002)}.

  \begin{conjecture}[Algebraic Atiyah Conjecture]\label{con_algebraic_Atiyah_Conjecture}
    Let $G$ be a group and let $F$  be a field $F$ with $\IQ \subseteq F \subseteq \IC$,
    which is closed under complex conjugation.
    Let $\calr_{FG}$ be the $\ast$-regular  closure of $FG$ in $\calu(G)$.

    We say that  $G$ satisfies the \emph{Algebraic Atiyah Conjecture with coefficients in $F$}
    if the composite of the canonical maps
    \[
    \colim_{H \subseteq G, |H| < \infty} K_0(FH) \to K_0(FG) \to K_0(\calr_{FG})
    \]
    is surjective.
  \end{conjecture}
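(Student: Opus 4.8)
The natural plan is to carry out Linnell's two-part program, recalled just above, with the $\ast$-regular closure $\calr_{FG}$ in place of the division closure $\cald(G)$, and the first observation is that for groups already known to satisfy the Atiyah Conjecture~\ref{con:Atiyah_Conjecture} the two closures coincide. A $\ast$-regular subring of $\calu(G)$ is von Neumann regular, hence division closed --- from $xyx = x$ with $x$ a unit in $\calu(G)$ one gets $y = x^{-1}$ --- so that $\cald(G) \subseteq \calr_{FG}$ unconditionally; conversely, since $F$ is closed under complex conjugation the division closure $\cald(G)$ is $\ast$-closed, so once it happens to be semisimple it is $\ast$-regular and therefore $\calr_{FG} \subseteq \cald(G)$. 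For torsionfree $G$, Theorem~\ref{the:Main_properties_of_cald(G)}~\eqref{the:Main_properties_of_cald(G):skew_field} makes $\cald(G)$, hence $\calr_{FG}$, a skew field, so $K_0(\calr_{FG}) \cong \IZ$ while the only finite subgroup of $G$ is trivial; the colimit map reduces to $K_0(F) \to K_0(\calr_{FG}) \cong \IZ$, which carries $[F]$ to the generator $[\calr_{FG}]$. Thus the Algebraic Atiyah Conjecture holds automatically for every torsionfree group satisfying the Atiyah Conjecture, in particular for all groups in the class $\cald$, and the genuine content lies with groups that have torsion.

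For such groups the ring-theoretic half of the plan is to establish that $\calr_{FG}$ is semisimple, i.e., a finite product $\prod_{i=1}^{k} M_{n_i}(D_i)$ of matrix algebras over skew fields $D_i$; given a bound on the orders of the finite subgroups together with the Atiyah Conjecture, this is Linnell's statement that $\cald(G) = \calr_{FG}$ is semisimple. The $K$-theoretic half is then to show that each generator $[D_i^{n_i}]$ of $K_0(\calr_{FG}) \cong \bigoplus_{i} \IZ \cdot [D_i^{n_i}]$ lies in the image of $\colim_{H \subseteq G,\, |H| < \infty} K_0(FH)$. Concretely, I would show that every simple $\calr_{FG}$-module, restricted along $FG \hookrightarrow \calr_{FG}$ and then to a suitable finite subgroup $H \subseteq G$, becomes a summand of a module induced from $FH$, so that up to stabilization and conjugation the idempotent cutting it out is already defined over $M_m(FH)$ for some $m$. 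This induction-completeness step is exactly what Linnell carries out for his class $\calc$ in~\cite{Linnell(1993)},~\cite[Chapter~10]{Lueck(2002)}; for groups satisfying the $K$-theoretic Farrell-Jones Conjecture one would instead feed the resulting computation of $K_0(FG)$ into the comparison map $K_0(FG) \to K_0(\calr_{FG})$.

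The hard part is precisely this $K$-theoretic surjectivity for groups with torsion that lie outside Linnell's class $\calc$ and beyond the reach of Farrell-Jones, where one has no prior structural grip on $\calr_{FG}$ at all: one must exclude that passage to the $\ast$-regular closure manufactures finitely generated projective modules not already visible at finite subgroups. I expect this to require either the full strength of the Atiyah Conjecture~\ref{con:Atiyah_Conjecture} (to force $\cald(G)$, hence $\calr_{FG}$, semisimple) together with a genuinely new completeness argument, or an extension of the approximation techniques of Jaikin-Zapirain that underlie Theorem~\ref{the:status_of_Atyah_Conjecture}~\eqref{the:status_of_Atyah_Conjecture:cald}. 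A secondary subtlety arises precisely when one does not already know $\cald(G)$ to be semisimple: then $\calr_{FG}$ and $\cald(G)$ may genuinely differ, the clean skew-field bookkeeping above breaks down, and the $K_0$-computation must be carried out for $\calr_{FG}$ directly.
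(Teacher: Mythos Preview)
The statement you were asked to prove is a \emph{conjecture}, and the paper does not prove it; it merely records the conjecture, notes that it implies the Atiyah Conjecture~\ref{con:Atiyah_Conjecture}, and discusses its relationship to the Farrell--Jones Conjecture and to the center-valued Atiyah Conjecture. There is therefore no ``paper's own proof'' to compare against.

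Your proposal is not a proof either, and to your credit you say so: you correctly isolate the torsionfree case as a consequence of Theorem~\ref{the:Main_properties_of_cald(G)} (modulo the Atiyah Conjecture, which you are tacitly assuming there), and you correctly identify that the substantive content lies in the $K$-theoretic surjectivity for groups with torsion outside Linnell's class~$\calc$, which is genuinely open. Your ring-theoretic remarks (that von Neumann regular subrings of $\calu(G)$ are division closed, that $\cald(G)$ is $\ast$-closed when $F$ is conjugation-closed, and hence that $\cald(G) = \calr_{FG}$ once $\cald(G)$ is semisimple) are correct and useful orientation. But the passage ``I would show that every simple $\calr_{FG}$-module \ldots\ becomes a summand of a module induced from $FH$'' is precisely the step nobody knows how to do in general, and nothing in your outline indicates a new idea for it. So what you have written is an accurate summary of the known landscape and of Linnell's program, not a proof; since the paper itself offers no proof, this is the appropriate state of affairs, but you should present it as a discussion of strategy rather than as a proof proposal.
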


  Note that the Farrell-Jones Conjecture, which is known for a large class of groups, see
  for instance~\cite[Theorem~12.56 and Theorem~15.1]{Lueck(2022book)}, predicts the
  bijectivity of the map $\colim_{H \subseteq G, |H| < \infty} K_0(FH) \to K_0(FG)$. If the
  Farrell-Jones Conjecture holds, then the Algebraic Atiyah
  Conjecture~\ref{con_algebraic_Atiyah_Conjecture} is equivalent to the claim that the map
  $K_0(FG) \to K_0(\calr_{FG})$ is surjective.  For a group $G$, for which there is a bound
  on its finite subgroups, the algebraic Atiyah
  Conjecture~\ref{con_algebraic_Atiyah_Conjecture} is equivalent to the so called
  center-valued Atiyah Conjecture with coefficients in $F$, which implies the Atiyah
  Conjecture~\ref{con:Atiyah_Conjecture} with coefficients in $F$,
  see~\cite[Definition~2.5.1, Proposition~2.5.2, Theorem~3.1.4]{Henneke(2021PhD)} and
  also~\cite[Definition~1.2, Proposition~1.3]{Knebusch-Linnell-Schick(2017)}.

  For more information about the Atiyah Conjecture we refer for
  instance to~\cite[Chapter~10]{Lueck(2002)}.

  %%%%%%%%%%%%%%%%%%%%%%%%%%%%%%%%%%%%%%%%%%%%%%%%%%%%%%%%%%%%%%%%%%%%%%%%%%%%%%%%%
  %%%%%%%%%%%%%%%%%%%%%%%%%%%%%%%%%%  Section 4: The Singer Conjecture   %%%%%%%%%%%%%%%%%%%%%%
  %%%%%%%%%%%%%%%%%%%%%%%%%%%%%%%%%%%%%%%%%%%%%%%%%%%%%%%%%%%%%%%%%%%%%%%%%%%%%%%%%

  \typeout{----------------------------- Section 4: The Singer Conjecture  ---------------------------}

  \section{The Singer Conjecture}\label{sec:The_SingerConjecture}

  %-----------------------------------------------------------------------------------------

  We now turn attention to a series of conjectures about $L^2$-invariants of aspherical manifolds.
  They deal with the phenomenon that $L^2$-invariants often vanish outside of the middle dimension.

  \subsection{Statement of the Singer Conjecture}%
  \label{subsec:Statement_of_the_Singer_Conjecture}

  \begin{conjecture}[Singer Conjecture]\label{con:Singer_Conjecture}
    If $M$ is an aspherical closed topological  manifold, then we get for $n \ge 0$
    \[
      b_n^{(2)}(\widetilde{M}) = 0 \quad \text{if} \; 2n \not= \dim(M).
    \]
    If $M$ is an aspherical closed topological  manifold of even dimension $2m$ , then
    \[
      (-1)^m \cdot \chi(M) = b_m^{(2)}(\widetilde{M}) \ge  0.
    \]
    If $M$ is a closed connected smooth manifold of even dimension $2m$ admitting a Riemannian metric of negative sectional curvature, then
    \[
      (-1)^m \cdot \chi(M) = b_m^{(2)}(\widetilde{M}) > 0.
    \]
  \end{conjecture}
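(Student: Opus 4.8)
A natural strategy is to reduce all three assertions to a single vanishing statement about $L^2$-harmonic forms and to attack that with comparison geometry; the assertions about $\chi(M)$ are then essentially formal. For an aspherical space $X$ one has $b_n^{(2)}(\widetilde{X}) = b_n^{(2)}(\pi_1(X))$ since $\widetilde{X} = E\pi_1(X)$, so the conjecture is really a statement about Poincar\'e duality groups, and all known approaches route through geometry. I would fix a Riemannian metric on $M$, pull it back to $\widetilde{M}$, and invoke Dodziuk's $L^2$-Hodge--de Rham theorem to identify $b_n^{(2)}(\widetilde{M}) = \dim_{\caln(\pi)}\bigl(\mathcal{H}^n_{(2)}(\widetilde{M})\bigr)$, the von Neumann dimension of the space of square-integrable harmonic $n$-forms on $\widetilde{M}$. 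If $M$ is only a topological manifold this Hodge-theoretic step is unavailable and one must argue algebraically over $\caln(\pi)$; in dimension $\le 3$ the issue evaporates, topological manifolds being smoothable there, and in fact the conjecture is already known for aspherical closed $3$-manifolds by geometrization, combining the hyperbolic case, Theorem~\ref{thm:main_properties_of_rho2(widetildeX)}\eqref{list:main_properties_of_rho2(widetildeX):hyperbolic}, the $S^1$-action case, Theorem~\ref{thm:main_properties_of_rho2(widetildeX)}\eqref{list:main_properties_of_rho2(widetildeX):S_upper_1-actions}, and the sum formula, Theorem~\ref{thm:main_properties_of_rho2(widetildeX)}\eqref{list:main_properties_of_rho2(widetildeX):sum_formula}. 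The target then becomes: on the universal cover of an aspherical closed manifold, every $L^2$-harmonic form of degree $n$ with $2n \ne \dim(M)$ vanishes.

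To prove this vanishing I would try to imitate Gromov's theorem for K\"ahler hyperbolic manifolds. There the K\"ahler form $\omega$ lifts to a form on $\widetilde{M}$ admitting a bounded primitive; the $\mathfrak{sl}_2$-action generated by $L = \omega \wedge (-)$ and its adjoint $\Lambda$ preserves $\mathcal{H}^{\ast}_{(2)}(\widetilde{M})$, and a bounded-primitive estimate forces every $L^2$-harmonic form to be primitive, hence concentrated in the middle degree. For an aspherical manifold carrying a metric of negative sectional curvature, the substitute for ``$\omega$ has a bounded primitive'' should come from the Hadamard structure of $\widetilde{M}$: one would attempt a Bochner--Weitzenb\"ock estimate in which the Weitzenb\"ock curvature term acting on $\Lambda^n T^{\ast}\widetilde{M}$ is bounded below away from zero for $n$ off the middle --- this is where negativity, and very likely curvature pinching, must enter --- together with control of the geometry at infinity in the spirit of the Donnelly--Xavier and Jost--Zuo vanishing theorems, so as to conclude $\mathcal{H}^n_{(2)}(\widetilde{M}) = 0$. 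For constant curvature this is the classical computation of Dodziuk recorded in Theorem~\ref{thm:main_properties_of_rho2(widetildeX)}\eqref{list:main_properties_of_rho2(widetildeX):hyperbolic}; beyond that, and a fortiori for a general aspherical $M$, no such argument is known.

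Granting the vanishing, the remaining assertions are nearly automatic. When $\dim(M) = 2m$, Poincar\'e duality, Theorem~\ref{thm:main_properties_of_rho2(widetildeX)}\eqref{list:main_properties_of_rho2(widetildeX):Poincare_duality}, gives $b_n^{(2)}(\widetilde{M}) = b_{2m-n}^{(2)}(\widetilde{M})$, and combining this with the vanishing and the Euler--Poincar\'e formula, Theorem~\ref{thm:main_properties_of_rho2(widetildeX)}\eqref{list:main_properties_of_rho2(widetildeX):Euler-Poincare_formula}, one gets $\chi(M) = \sum_n (-1)^n b_n^{(2)}(\widetilde{M}) = (-1)^m b_m^{(2)}(\widetilde{M})$, whence $(-1)^m \chi(M) = b_m^{(2)}(\widetilde{M}) \ge 0$. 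For the third assertion the only extra content is the strict inequality, and by the identity just derived it is equivalent to $\chi(M) \ne 0$, i.e.\ to the Hopf conjecture on the sign of the Euler characteristic of a closed negatively curved manifold. I would try to obtain it either from a Chern--Gauss--Bonnet argument, proving that the Pfaffian integrand has constant sign $(-1)^m$, or by producing directly a nonzero $L^2$-harmonic $m$-form on $\widetilde{M}$; for real hyperbolic $M$ the latter is again Dodziuk's explicit computation, and in general both routes are open.

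The main obstacle is the degree-off-the-middle vanishing in the absence of a K\"ahler (or any parallel) form: Gromov's $\mathfrak{sl}_2$-mechanism has no evident counterpart for a general negatively curved manifold, and the Bochner-type arguments available in the literature yield vanishing only under rather strong curvature pinching, so even the purely cohomological form of the conjecture --- let alone the sharper statement about $\caln(\pi)$-dimensions --- is out of reach for variable negative curvature. For a general aspherical closed manifold the situation is harder still: there need not be any nonpositively curved metric, so no curvature estimate is available, and no algebraic mechanism over $\caln(\pi)$ is known that would force the vanishing from Poincar\'e duality of $\pi_1(M)$ alone. Finally, the strict positivity in the even-dimensional negatively curved case inherits, through the identity above, the full difficulty of the Hopf conjecture.
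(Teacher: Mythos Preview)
The statement you are attempting to prove is a \emph{conjecture}, not a theorem, and the paper does not contain a proof of it. The Singer Conjecture is presented precisely as an open problem; the surrounding text surveys the cases in which it is known (low dimensions via geometrization, locally symmetric spaces, pinched negative curvature, K\"ahler hyperbolic manifolds, manifolds whose fundamental group contains an amenable infinite normal subgroup, nontrivial $S^1$-actions, fibrations over $S^1$) and explicitly remarks that ``evidence for the Singer Conjecture comes from computations only and no promising proof strategy is known.'' There is therefore no ``paper's own proof'' against which to compare your proposal.

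Your write-up is not a proof attempt so much as a survey of heuristics, and you are candid about this: you say that beyond constant curvature ``no such argument is known,'' that the Bochner-type arguments only work under strong pinching, and that the general aspherical case is ``out of reach.'' That honesty is appropriate. The one part of your discussion that \emph{is} a genuine argument --- deriving the equality $(-1)^m\chi(M)=b_m^{(2)}(\widetilde{M})$ from the vanishing assertion via the Euler--Poincar\'e formula and Poincar\'e duality --- is correct and is exactly what the paper notes immediately after stating the conjecture. But this is a formal consequence of the first assertion, not progress toward it. The substantive content of the conjecture is the vanishing of $b_n^{(2)}(\widetilde{M})$ for $2n\neq\dim(M)$ and the strict positivity in the negatively curved even-dimensional case, and for neither do you (or the paper, or anyone) offer a proof.
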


  The equality $(-1)^m \cdot \chi(M) = b_m^{(2)}(\widetilde{M})$ appearing in the
  Singer Conjecture~\ref{con:Singer_Conjecture} above follows from the the Euler-Poincar\'e
  formula $\chi(M) = \sum_{p \ge 0} (-1)^p \cdot b_p^{(2)}(\widetilde{M})$.

  The Singer Conjecture~\ref{con:Singer_Conjecture} is consistent with the Atiyah
  Conjecture in the sense that it predicts that the $L^2$-Betti numbers
  $b_n^{(2)}(\widetilde{M})$ for an aspherical closed manifold $M$ are all integers.

  In original versions of the Singer Conjecture~\ref{con:Singer_Conjecture}
  the condition aspherical closed manifolds was
  replaced by the condition closed Riemannian manifold with non-positive sectional
  curvature. Note that a closed Riemannian manifold with non-positive sectional curvature is
  aspherical by Hadamard's Theorem.

  %-----------------------------------------------------------------------------------------

  \subsection{Hopf Conjectures}%
  \label{subsec:Hopf_Conjectures}

  Obviously the
  Singer Conjecture~\ref{con:Singer_Conjecture} implies the following conjecture in the
  cases, where $M$ is aspherical or has negative sectional curvature.

  \begin{conjecture}[Hopf Conjecture]\label{con:Hopf_Conjecture}
    If $M$ is an aspherical closed topological manifold of even dimension $\dim (M) = 2m$, then
    \[
      (-1)^{m} \cdot \chi(M) \ge 0.
    \]
    If $M$ is a closed smooth manifold of even dimension $\dim (M) = 2m$ with Riemannian metric
    and with sectional curvature $\sec(M)$, then
    \[
      \begin{array}{rlllllll}
        (-1)^m \cdot \chi(M) & > & 0 & &
                                         \text{if}   & \sec(M) & < & 0;
        \\
        (-1)^m \cdot \chi(M) & \ge   & 0 & &
                                             \text{if}   & \sec(M) & \le & 0;
        \\
        \chi(M) & = & 0 & &
                            \text{if}  & \sec(M) & = & 0;
        \\
        \chi(M) & \ge & 0 & &
                              \text{if}   & \sec(M) & \ge & 0;
        \\
        \chi(M) & > & 0 & &
                            \text{if}  & \sec(M) & > & 0.
      \end{array}
    \]
  \end{conjecture}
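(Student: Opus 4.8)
The plan is to split the statement according to its hypotheses and to reduce the aspherical and negatively curved parts to the Singer Conjecture~\ref{con:Singer_Conjecture}. If $M$ is a closed aspherical manifold of dimension $2m$ for which the Singer Conjecture holds, then $b_n^{(2)}(\widetilde{M}) = 0$ for every $n \neq m$, so the Euler--Poincar\'e formula (Theorem~\ref{thm:main_properties_of_rho2(widetildeX)}~\eqref{list:main_properties_of_rho2(widetildeX):Euler-Poincare_formula}) collapses to $\chi(M) = (-1)^m \cdot b_m^{(2)}(\widetilde{M})$, and the right-hand side is $\geq 0$ since $L^2$-Betti numbers are non-negative; when $M$ admits a metric of negative sectional curvature the sharper Singer prediction $b_m^{(2)}(\widetilde{M}) > 0$ upgrades this to the strict inequality. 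The case $\sec(M) \leq 0$ is then absorbed into the aspherical case, because by Hadamard's theorem $\widetilde{M}$ is diffeomorphic to $\IR^{2m}$, so $M$ is aspherical.

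The flat case $\sec(M) = 0$ can instead be settled unconditionally: by Bieberbach's theorem $M$ has a finite cover diffeomorphic to the torus $T^{2m}$, and multiplicativity of the Euler characteristic under finite coverings then forces $\chi(M) = 0$. The cases $\sec(M) \geq 0$ and $\sec(M) > 0$ are of a different nature, since they are not consequences of the Singer Conjecture, and the only systematic tool is the Chern--Gauss--Bonnet theorem, which expresses $\chi(M)$ as the integral over $M$ of a fixed universal polynomial (the Pfaffian) in the components of the curvature tensor. In dimension $2$ this is classical Gauss--Bonnet and immediately gives all the sign statements. In dimension $4$ one carries out the elementary algebraic check, going back to Milnor and Chern, that the Gauss--Bonnet integrand has the predicted sign pointwise as soon as the sectional curvature has a definite sign, which proves the conjecture in that dimension as well.

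I expect the main obstacle to sit in two places. First, in dimensions $\geq 6$ the pointwise Chern--Gauss--Bonnet strategy breaks down: Geroch constructed metrics of negative sectional curvature whose Gauss--Bonnet integrand changes sign, so the $\sec > 0$ and $\sec \geq 0$ cases genuinely require a new idea and cannot be reduced to a local positivity statement. Second, for the aspherical and negatively curved cases the argument is only as strong as the Singer Conjecture itself, which is wide open; it is known only in special situations (aspherical $3$-manifolds via geometrization, hyperbolic manifolds, K\"ahler hyperbolic manifolds by Gromov's vanishing theorem, various locally symmetric spaces, and certain aspherical manifolds with right-angled or building-type structure), and establishing it for a general high-dimensional closed aspherical manifold is the heart of the matter. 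In particular, for negatively curved K\"ahler manifolds Gromov's work already yields the strict inequality $(-1)^m \chi(M) > 0$, so the truly unsolved territory is non-K\"ahler negative curvature, general aspherical manifolds, and non-negative curvature in dimensions $\geq 6$.
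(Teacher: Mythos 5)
This is an open conjecture, and the paper does not supply a proof; its only comment is the one sentence immediately preceding the statement, namely that the Singer Conjecture~\ref{con:Singer_Conjecture} implies the Hopf Conjecture in the aspherical and negative-curvature cases. Your write-up correctly reproduces that reduction: under Singer, $b_n^{(2)}(\widetilde{M})=0$ for $n\neq m$, the Euler--Poincar\'e formula gives $\chi(M)=(-1)^m\, b_m^{(2)}(\widetilde{M})\geq 0$, the $\sec<0$ case gives strict inequality via the sharpened Singer statement, and the $\sec\leq 0$ case is absorbed into the aspherical case by Cartan--Hadamard. So on the part the paper actually addresses you are in agreement.

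Where you go beyond the paper, your observations are accurate and worth recording: the $\sec\equiv 0$ case is a theorem (Bieberbach plus multiplicativity of $\chi$ under finite covers forces $\chi(M)=0$, with no appeal to Singer); the $\sec\geq 0$ and $\sec>0$ cases are genuinely outside the Singer framework since such manifolds are typically not aspherical; in dimensions $2$ and $4$ the Chern--Gauss--Bonnet integrand has the correct pointwise sign (Chern, Milnor); and Geroch's example shows the pointwise-sign strategy fails already in dimension $6$, so the high-dimensional positive-curvature cases require a fundamentally new idea. None of this contradicts the paper, which simply does not discuss those cases.

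One small caveat: you phrase your text as though it were a proof, but it is a reduction-plus-survey, which is the right thing to do here since the Hopf Conjecture is open. It would be cleaner to say explicitly up front that the conjecture is unproved and that what follows are implications and partial results, so the reader does not mistake the Singer reduction for an unconditional argument.
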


  The following version of the Hopf Conjecture for $L^2$-torsion appears in~\cite[Conjecture~11.3 on page~418]{Lueck(2002)}

  \begin{conjecture}[Hopf Conjecture for $L^2$-torsion]\label{con:Hopf_Conjecture_for_L2-torsion}
    If $M$ is an aspherical closed topological manifold of odd dimension $\dim (M) = 2m+1$,
    then $M$ is $\det$-$L^2$-acyclic, and its $L^2$-torsion satisfies
    \[
      (-1)^{m} \cdot \rho^{(2)}(\widetilde{M})\ge 0.
    \]
    If $M$ is a closed smooth manifold of odd dimension $\dim (M) = 2m+1$ with Riemannian metric
    and with sectional curvature $\sec(M) < 0$, then $M$ is $\det$-$L^2$-acyclic, and its $L^2$-torsion satisfies
    \[
      (-1)^{m} \cdot \rho^{(2)}(\widetilde{M}) > 0.
    \]
    If $M$ is an aspherical closed topological manifold of odd dimension $\dim (M) = 2m+1$
    whose fundamental group contains an amenable infinite normal subgroup,
    then $M$ is $\det$-$L^2$-acyclic and its $L^2$-torsion satisfies
    \[
      \rho^{(2)}(\widetilde{M}) =  0.
    \]

  \end{conjecture}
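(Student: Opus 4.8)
The plan is to treat the conjecture's three assertions separately, since only the third lies within reach of the results collected above. \textbf{The amenable-normal-subgroup assertion.} First I would reduce from a closed topological manifold $M$ to a finite aspherical $CW$-model: a closed topological manifold has a preferred simple homotopy type represented by a finite $CW$-complex (Chapman's topological invariance of Whitehead torsion), so by the simple-homotopy-invariance part of Theorem~\ref{thm:main_properties_of_rho2(widetildeX)}~\eqref{list:main_properties_of_rho2(widetildeX):homotopy_invariance} both $\det$-$L^2$-acyclicity and the value of $\rho^{(2)}$ are unchanged; alternatively one uses the Determinant Conjecture~\ref{con:Determinant_Conjecture} for $\pi_1(M)$ together with ordinary homotopy invariance. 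Next, Cheeger--Gromov vanishing gives $b_n^{(2)}(\widetilde M) = 0$ for all $n$ once $\pi_1(M)$ contains an infinite amenable normal subgroup, which together with determinant class (the Determinant Conjecture, known for sofic groups) yields $\det$-$L^2$-acyclicity. Finally $\rho^{(2)}(\widetilde M) = 0$ follows from the argument underlying Theorem~\ref{thm:main_properties_of_rho2(widetildeX)}~\eqref{list:main_properties_of_rho2(widetildeX):aspherical}, with ``elementary amenable'' upgraded to ``amenable'' by replacing the transfinite induction with an amenable-exhaustion estimate; the delicate point is that $L^2$-torsion, unlike $L^2$-Betti numbers, does not transform simply under restriction to the normal subgroup, so the estimate must be run directly on the $\caln(\pi_1(M))$-chain complex, using continuity of the Fuglede--Kadison determinant to pass to the limit.

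For the two inequalities I would proceed in two stages. \textbf{Stage one: $\det$-$L^2$-acyclicity.} Since $\dim M = 2m+1$ is odd, Poincar\'e duality, Theorem~\ref{thm:main_properties_of_rho2(widetildeX)}~\eqref{list:main_properties_of_rho2(widetildeX):Poincare_duality}, identifies $b_n^{(2)}(\widetilde M)$ with $b_{2m+1-n}^{(2)}(\widetilde M)$, so the Singer Conjecture~\ref{con:Singer_Conjecture} for $M$ --- vanishing of $b_n^{(2)}(\widetilde M)$ away from the middle dimension, of which there is none in odd dimensions --- is \emph{equivalent} to $L^2$-acyclicity of $\widetilde M$; adding the Determinant Conjecture gives $\det$-$L^2$-acyclicity. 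This reduces the acyclicity half of the statement to established conjectures, but it is not unconditionally known for general aspherical $M$. \textbf{Stage two: the sign of $\rho^{(2)}$.} Here I would take the genuinely hyperbolic case of Theorem~\ref{thm:main_properties_of_rho2(widetildeX)}~\eqref{list:main_properties_of_rho2(widetildeX):hyperbolic} as a template, where $\rho^{(2)}(\widetilde M) = (-1)^m C_m\vol(M)$ with $C_m > 0$, so the predicted sign is already correct there. To extend this to variable negative, or merely non-positive, curvature one would pass to the $L^2$-analytic torsion via the $L^2$-Cheeger--M\"uller theorem of~\cite{Burghelea-Friedlander-Kappeler-McDonald(1996a)}, express it through the heat kernel on the universal covering in the spirit of~\eqref{L_upper_2-Betti_number_and_heat_kernel}, and try to control the sign of the resulting regularised determinant of the Laplacian from the sign of the curvature, paralleling the Chern--Gauss--Bonnet route used for the Hopf Conjecture~\ref{con:Hopf_Conjecture} on $\chi$.

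The hard part is precisely Stage two for variable curvature: unlike $\chi$, the $L^2$-torsion is a \emph{secondary}, non-local invariant, so there is no pointwise curvature integrand to estimate, and the heat-kernel expression is a limit of traces with no known monotonicity in the curvature tensor --- this is why the conjecture is open in general, and why the only cases one can currently settle are those that force $\rho^{(2)}(\widetilde M)$ to a boundary value: it vanishes when $M$ carries a fixed-point-free $S^1$-action or $\pi_1(M)$ has an infinite amenable normal subgroup, by Theorem~\ref{thm:main_properties_of_rho2(widetildeX)}~\eqref{list:main_properties_of_rho2(widetildeX):S_upper_1-actions} and~\eqref{list:main_properties_of_rho2(widetildeX):aspherical}, and the strict inequality holds with an explicit constant in the hyperbolic case. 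A partial result one could realistically hope to prove along these lines is that $\rho^{(2)}(\widetilde M) = 0$ whenever $M$ fibres over $S^1$, or is more generally a mapping torus of a self-homotopy-equivalence of a finite aspherical complex, via Theorem~\ref{thm:main_properties_of_rho2(widetildeX)}~\eqref{list:main_properties_of_rho2(widetildeX):mapping_tori}.
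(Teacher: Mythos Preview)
This statement is a \emph{conjecture}, not a theorem, and the paper does not prove it; it only records the special cases in which it is known (dimension $\le 3$, locally symmetric spaces, $\pi_1(M)$ containing an \emph{elementary} amenable infinite normal subgroup, nontrivial $S^1$-action), with references. So there is no ``paper's proof'' to compare against; the relevant question is whether your sketch actually establishes anything new.

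The genuine gap is in your treatment of the third assertion, which you describe as ``within reach''. The vanishing of $L^2$-Betti numbers under an amenable infinite normal subgroup is indeed Cheeger--Gromov, but the vanishing of $\rho^{(2)}(\widetilde M)$ is not: Theorem~\ref{thm:main_properties_of_rho2(widetildeX)}~\eqref{list:main_properties_of_rho2(widetildeX):aspherical} only gives $\rho^{(2)}(\widetilde M)=0$ when the normal subgroup is \emph{elementary} amenable, and the paper explicitly flags (Subsection~\ref{Vanishing_results}) that ``it is unknown whether $\rho^{(2)}(G) = 0$ vanishes if $G$ contains an amenable infinite normal subgroup''. Your proposed upgrade --- ``replacing the transfinite induction with an amenable-exhaustion estimate'' and appealing to ``continuity of the Fuglede--Kadison determinant'' --- is precisely the step nobody knows how to carry out: the proof for elementary amenable groups proceeds by transfinite induction over the elementary amenable hierarchy, and there is no known replacement mechanism for general amenable groups (F{\o}lner exhaustions do not interact well with determinants in the way you suggest). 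The one result that goes beyond elementary amenable is Li--Thom's $\rho^{(2)}(G)=0$ for admissible amenable $G$, but that concerns the whole group, not a normal subgroup. Separately, your appeal to the Determinant Conjecture for determinant class is itself conditional: it is known for sofic groups, but not for arbitrary $\pi_1(M)$.

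Your discussion of the first two assertions is honest about their conjectural status and correctly identifies the obstacles (no local integrand for $\rho^{(2)}$, no curvature monotonicity for heat traces), so there is nothing to object to there beyond noting that this is commentary rather than a proof plan.
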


  The Hopf Conjecture~\ref{con:Hopf_Conjecture_for_L2-torsion} for $L^2$-torsion is known to
  be true if one of the following conditions is satisfied:
  \begin{enumerate}
  \item $\dim(M) \le 3$;
  \item $M$ is a locally symmetric space;
  \item $\pi_1(M)$ contains an elementary amenable infinite normal subgroup;
  \item $M$ carries a non-trivial $S^1$-action.
  \end{enumerate}

  Statement (1) follows from combining~\cite[Theorem~0.7]{Lueck-Schick(1999)} with  Thurston's Geometrization conjecture, which
  is known to be true
  by~\cite{Kleiner-Lott(2008),Morgan-Tian(2014)} following the spectacular outline of Perelman.
    Statement (2) can be found in~\cite[Corollary~5.16 on page~231]{Lueck(2002)} and statements (3) and (4) are from~\cite[Corollary~1.13]{Lueck(2013l2approxfib)}. See also
  Remark~\ref{rem:motivation_Modifiied_Conjecture}.

  % -----------------------------------------------------------------------------------------

  \subsection{Status of the Singer Conjecture}%
  \label{subsec:Status_of_the_Singer_Conjecture}

    The Singer Conjecture~\ref{con:Singer_Conjecture} is known for an aspherical closed
    smooth manifold $M$ in the following cases:
    \begin{enumerate}
    \item  $\dim(M) \le 3$;
    \item  $M$ comes with a Riemannian metric whose sectional curvature
      is negative and satisfies certain pinching conditions;
    \item $M$ is a locally symmetric space;
    \item $M$ possesses  a Riemannian metric, whose sectional curvature
      is negative and $M$ carries some K\"ahler structure;
      \item  $M$ is an aspherical closed K\"ahler manifold, whose fundamental group is
        word-hyperbolic in the sense of Gromov~\cite{Gromov(1987)}.
      \item $\pi_1(M)$ contains an amenable infinite normal subgroup.
      \item $M$ carries a non-trivial $S^1$-action;
      \item $M$ fibers over $S^1$.
  \end{enumerate}
  The precise statements and proofs and the relevant references,
  e.g.,~\cite{Ballmann-Bruening(2001), Cheeger-Gromov(1986), Gromov(1991), Jost-Xin(2000),
    Lueck(1994b), Olbrich(2002)}, can be found in~\cite[Theorem~1.39 on page~42,
  Corollary~1.43 on page~48, Theorem~1.44 on page~48, Section~11.1]{Lueck(2002)}, again
  using Thurston's Geometrization Conjecture for the three dimensional case.  We also
  mention that proofs of (1) - (5) use geometric properties of the manifold whereas (6) -
  (8) use algebraic topological techniques and do not use the manifold structure.

  Partial results about the Singer Conjecture~\ref{con:Singer_Conjecture} for right-angled
  Coxeter groups can be found in Davis-Okun~\cite{Davis-Okun(2001)}.

  The paper by Albanese, Di Cerbo and Lombardi~\cite{Albanese-Di-Cerbo-Lombardi(2023)}
  deals with the Singer Conjecture for aspherical complex surfaces and
  proves it for aspherical complex surfaces with residually finite
  fundamental groups in~\cite[Theorem~1.5]{Albanese-Di-Cerbo-Lombardi(2023)}.

  In contrast to the Atiyah Conjecture, evidence for the Singer
  Conjecture~\ref{con:Singer_Conjecture} comes from computations only and no promising
  proof strategy is known.  In some sense Poincar\'e duality together with $L^2$-bounds on
  differential forms on $\widetilde M$ seem to force the $L^2$-Betti numbers
  $b_p^{(2)}(\widetilde{M})$ of an aspherical closed manifold to concentrate in the middle
  dimension.  One may wonder what happens if we replace $M$ by an aspherical finite
  Poincar\'e complex in the Singer Conjecture~\ref{con:Singer_Conjecture}.  There are
  counterexamples to the Singer Conjecture~\ref{con:Singer_Conjecture} if one weakens
  aspherical to rationally aspherical, see~\cite[Theorem~4]{Avramidi(2018)}. The reader
  should also take a look at Remark~\ref{rem:Singer_and_growth} and
  Remark~\ref{rem:motivation_Modifiied_Conjecture}.

  % -----------------------------------------------------------------------------------------

  \subsection{The proper Singer Conjecture and the action dimension}%
  \label{subsec:The_proper_Singer_Conjecture_and_the_action_dimension}

    One may generalize the Singer Conjecture~\ref{con:Singer_Conjecture}
    to the following

    \begin{conjecture}[Proper Singer Conjecture]\label{con_proper_Singer_Conjecture}
      A group $G$ satisfies the \emph{proper Singer Conjecture}, if for any contractible
      topological manifold $M$ without boundary on which $G$ acts properly and cocompactly,
      we have for all $n \ge 0$
      \[
        b_n^{(2)}(M;\caln(G)) = 0 \quad \text{if} \; 2n \not= \dim(M).
      \]
    \end{conjecture}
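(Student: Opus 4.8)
The plan is to split off the cases already within reach and then to isolate the genuinely new difficulty. First I would record that when the $G$-action on $M$ is also free, $M/G$ is a closed aspherical topological manifold with fundamental group $G$ and $M = \widetilde{M/G}$, so the Proper Singer Conjecture~\ref{con_proper_Singer_Conjecture} for $(G,M)$ is literally the Singer Conjecture~\ref{con:Singer_Conjecture} for $M/G$; in particular it holds whenever $\dim(M) \le 3$, whenever $G$ contains an amenable infinite normal subgroup, and in several of the other cases listed in Subsection~\ref{subsec:Status_of_the_Singer_Conjecture}. More generally, if the fixed point sets of the $G$-action on $M$ are contractible then $M$ is a model for $\underline{E}G$ and $b_n^{(2)}(M;\caln(G)) = b_n^{(2)}(G)$, so it would suffice to prove the conjecture for some minimal-dimensional cocompact model for $\underline{E}G$ that happens to be a manifold; combining this with multiplicativity under finite coverings, Theorem~\ref{thm:main_properties_of_rho2(widetildeX)}~\eqref{list:main_properties_of_rho2(widetildeX):multiplicativity}, reduces the conjecture for a virtually torsionfree $G$ to its torsionfree finite-index subgroups.

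Next I would exploit the link with the action dimension signalled by the title of the subsection. Since $M$ is a contractible manifold of dimension $m := \dim(M)$, an equivariant Poincar\'e--Lefschetz duality argument---realizing $M$ as the interior of a compact $G$-manifold with boundary and comparing with the long exact sequence of the pair---should relate $b_n^{(2)}(M;\caln(G))$ for $2n > m$ to $L^2$-Betti numbers of the boundary at infinity, so that the essential content becomes the vanishing of the \emph{low-degree} numbers $b_n^{(2)}(M;\caln(G))$ for $2n < m$; an induction on $m$ would then be natural. The most tractable concrete instance is that of right-angled Coxeter and Artin groups, where $M$ can be taken to be the Davis complex or a regular neighbourhood of the Salvetti complex, the relevant chain complex is completely combinatorial, and one can try to upgrade the computations of Davis--Okun~\cite{Davis-Okun(2001)} to the proper cocompact setting; here the integrality provided by Theorem~\ref{the:status_of_Atyah_Conjecture} is a useful sanity check on any such computation.

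The hard part, and the reason this remains only a conjecture, is that the free cocompact case already \emph{is} the open Singer Conjecture, so any general argument would in particular have to prove it: at present there is no mechanism that forces concentration of $L^2$-cohomology in the middle dimension for an arbitrary contractible manifold. Concretely, the known sources of middle-dimensional concentration---negative or nonpositive curvature, K\"ahler geometry, $S^1$-actions, amenable normal subgroups---are all properties of the \emph{quotient} and have no visible analogue once the action has fixed points and no invariant geometry; producing a curvature-free, purely homological substitute for them is exactly what is missing. A realistic intermediate target would be to settle the case where $M$ is in addition a model for $\underline{E}G$, which reduces the statement to the group-theoretic assertion $b_n^{(2)}(G) = 0$ for $2n \ne \dim(M)$, and there to combine the integrality coming from the Atiyah Conjecture~\ref{con:Atiyah_Conjecture} with the dimension bound $b_n^{(2)}(G) = 0$ for $n > \vcd(G)$ and an equivariant Poincar\'e duality for $\underline{E}G$ in order to severely constrain which $L^2$-Betti numbers can be nonzero.
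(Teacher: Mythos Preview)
This statement is a \emph{conjecture}, not a theorem, and the paper does not prove it; accordingly there is no ``paper's own proof'' to compare against. You correctly recognise this yourself in your third paragraph, where you note that the free cocompact case already contains the open Singer Conjecture~\ref{con:Singer_Conjecture}. So your write-up is not a proof proposal in the usual sense but a strategy discussion, and as such it is broadly reasonable.

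That said, here is how your discussion compares with the paper's own commentary following the conjecture. The paper makes two points: first, that for torsionfree $G$ the proper Singer Conjecture is equivalent to the ordinary Singer Conjecture for one (hence any) aspherical closed manifold with fundamental group $G$; second, and more substantially, that the proper Singer Conjecture for $G$ is \emph{equivalent} to the cadim-Conjecture for manifolds with PL-boundary, by~\cite[Theorem~4.10]{Okun-Schreve(2016)}. Your first paragraph covers the first point and goes a bit further (the $\underline{E}G$ observation and the reduction to torsionfree finite-index subgroups via restriction are correct and not in the paper). Your second paragraph gestures at the action-dimension link via an equivariant Poincar\'e--Lefschetz argument, but stops short of the sharp statement the paper records: the Okun--Schreve equivalence is the precise formulation of what you are reaching for, and it would strengthen your discussion to cite it directly rather than sketch a duality argument.

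One small caution: your claim that ``it would suffice to prove the conjecture for some minimal-dimensional cocompact model for $\underline{E}G$ that happens to be a manifold'' is not obviously a reduction, since the conjecture quantifies over \emph{all} contractible manifolds $M$ with a proper cocompact $G$-action, and such an $M$ need not be a model for $\underline{E}G$ (fixed-point sets of finite subgroups need not be contractible). The Okun--Schreve reformulation is what makes the passage between different such $M$ precise.
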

    Note that the proper Singer Conjecture is a statement about a group $G$, whereas the
    Singer Conjecture~\ref{con:Singer_Conjecture} is a statement about an aspherical closed
    manifold. Provided that $G$ is torsionfree, the proper Singer Conjecture holds for $G$
    if and only if the Singer Conjecture~\ref{con:Singer_Conjecture} holds for one (and hence all)
    aspherical closed manifold $M$ with $\pi_1(M) \cong G$.

    The proper Singer Conjecture for a group $G$ is equivalent to the cadim-Con\-jec\-ture (cadim standing for compact action dimension)
    for $G$ for manifolds with PL-boundary, see~\cite[Theorem~4.10]{Okun-Schreve(2016)}.
    The latter predicts that for a contractible topological manifold $M$ (possibly with boundary)
    that admits a cocompact proper topological action of $G$ such that the boundary
    $\partial M$ of $M$ carries a $\PL$-structure for which the $G$-action on $\partial M$
    is through $\PL$-automorphisms we have
  \[
    2 \cdot \sup\{n \in \IZ^{\ge 0} \mid b_n^{(2)}(G) \not= 0\} \le  \dim(M).
  \]
  For more informations about the notion of the  (compact) action dimension and its
  relationship  to $L^2$-Betti numbers we refer for instance
  to~\cite{Avramidi-Davis-Okun-Schreve(2016), Bestvina-Kapovich-Kleiner(2002), Okun-Schreve(2016)}.

  % -----------------------------------------------------------------------------------------

  \subsection{The $\IF_p$-Singer Conjecture}\label{subsec:The_F_p-Singer_Conjecture}
  One may also consider the \emph{$\IF_p$-Singer Conjecture}, which predicts that
  for an aspherical closed topological  manifold $M$ and any prime $p$ we get for  $n \ge 0$
  \[
    b_n^{(2)}(\pi_1(M);\IF_p) = 0 \quad \text{if} \; 2n \not= \dim(M).
  \]
  Here $b_n^{(2)}(\pi_1(M);\IF_p)$ is defined only if $\pi_1(M)$ is residually finite.
  Namely, for a chain of normal subgroups of finite index
  $\pi_1(M) = \Gamma_0 \supseteq \Gamma_1 \supseteq \Gamma_2 \supseteq \cdots$ with
  $\bigcap_{n = 0}^{\infty} \Gamma_n = \{1\}$, one puts
  $b_n^{(2)}(\pi_1(M);\IF_p) = \limsup_{n \to \infty} \frac{b_n(\Gamma_k;\IF_p)}{[\Gamma:  \Gamma_k]}$.
  It is unknown in general  whether this definition depends on the chain of subgroups  $\Gamma_k$ and the statement above is to be understood in the sense that it holds for any such chain.
  The definition of $L^2$-Betti numbers over $\IF_p$ is motivated by the
  Approximation Theorem in characteristic zero~\ref{thm:main_properties_of_rho2(widetildeX)}~\eqref{list:main_properties_of_rho2(widetildeX):approximation}.
  The $\IF_p$-Singer Conjecture is open for $\dim(M) = 3$. However, for any odd prime $p$,
  the $\IF_p$-Singer Conjecture fails in all odd dimensions larger or equal than $7$ and all even dimensions
  larger or equal than $14$, see~\cite[Theorem~4]{Avramidi-Okun-Schreve(2021)}.

  %%%%%%%%%%%%%%%%%%%%%%%%%%%%%%%%%%%%%%%%%%%%%%%%%%%%%%%%%%%%%%%%%%%%%%%%%%%%%%%%%
  %%%%%%%%%%%%%%%% e%%%%%%%%%%%% Section 5: Determinant Conjecture %%%%%%%%%%%%%%%%%%%%%%%%%%%
  %%%%%%%%%%%%%%%%%%%%%%%%%%%%%%%%%%%%%%%%%%%%%%%%%%%%%%%%%%%%%%%%%%%%%%%%%%%%%%%%%

  \typeout{---------------------------- Section 5: Determinant Conjecture
    ---------------------------}

  \section{The Determinant Conjecture}\label{sec:The_Determinant-Conjecture}

  Next we explain the Determinant Conjecture~\cite[Conjecture~13.2 on
  page~454]{Lueck(2002)}.
  It is needed for the definition of $L^2$-determinant  class and implies homotopy invariance of $L^2$-torsion,
  see Theorem~\ref{thm:main_properties_of_rho2(widetildeX)}~\eqref{list:main_properties_of_rho2(widetildeX):homotopy_invariance}.
  Furthermore, it implies the Approximation Conjecture, see
  Remark~\ref{rem:The_Determinant_Conjecture_implies_the_Approximation_Conjecture_for_L2-Betti_numbers}.

  \begin{conjecture}[Determinant Conjecture for a group $G$]\label{con:Determinant_Conjecture}\ \\
    For any matrix $A \in M_{r,s}(\IZ G)$, the Fuglede-Kadison determinant of the
    $\caln(G)$-homomorphism $r_A \colon \caln(G)^r \to \caln(G)^s$ given by right
    multiplication with $A$ satisfies
    \[
      {\det}_{\caln(G)}\left(r_A \right) \ge 1.
    \]
  \end{conjecture}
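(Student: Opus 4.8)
The plan is to prove the Determinant Conjecture not for all groups -- it is open in general -- but for as large a class of groups as the method reaches, ultimately the \emph{sofic} groups, by reducing through finite-dimensional approximation to the case of finite groups. This is the strategy of L\"uck for residually finite groups, of Schick for residually amenable groups, and of Elek--Szab\'o for sofic groups.

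\emph{Base case.} Suppose $G$ is finite, so $\caln(G) = \IC G$. By Example~\ref{exa:det_for_finite_groups}, ${\det}_{\caln(G)}(r_A) = {\det}_{\IC}\bigl(\overline{(r_A)^{\ast}(r_A)}\bigr)^{1/(2|G|)}$. Using the basis $G$ of $\IZ G$, right multiplication by an element of $\IZ G$ is an integer matrix (a $\IZ$-combination of permutation matrices), so $r_A$ is represented by an integer matrix $B$, and ${\det}_{\IC}\bigl(\overline{(r_A)^{\ast}(r_A)}\bigr)$ is the product of the nonzero eigenvalues of the positive semidefinite integer matrix $B^{\ast}B$. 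That product is, up to sign, the coefficient of $t^{\dim\ker(B^{\ast}B)}$ in $\det(tI + B^{\ast}B)$, which has integer coefficients; since the nonzero eigenvalues are real and positive, the product is a positive integer. Hence the eigenvalue-product bound $\prod_{\mu\neq 0}\mu \ge 1$ holds, and with the convention $\det(0)=1$ we get ${\det}_{\caln(G)}(r_A) \ge 1$ whenever $G$ is finite.

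\emph{Approximation.} Let $G$ be residually finite with finite-index normal subgroups $G = G_0 \supseteq G_1 \supseteq \cdots$ and $\bigcap_i G_i = \{1\}$; let $A_i \in M_{r,s}(\IZ[G/G_i])$ be the reduction of $A$, with spectral density function $F_i$ (Subsection~\ref{subsec:Fuglede-Kadison_determinants}) of $r_{A_i}$ over $\caln(G/G_i)$, and let $F$ be that of $r_A$ over $\caln(G)$. Their measures are finite, of total mass $\le r$, and supported in a common interval $[0,C]$ with $C$ depending only on an $\ell^1$-bound on $A$ that does not grow under reduction. The moment computation behind the Approximation Theorem~\ref{thm:main_properties_of_rho2(widetildeX)}~\eqref{list:main_properties_of_rho2(widetildeX):approximation} -- the value at $1 \in G/G_i$ of a word in $A_i, A_i^{\ast}$ stabilises to the value at $1 \in G$ of the corresponding word in $A, A^{\ast}$ -- gives weak convergence $dF_i \to dF$, and the base-case bound $\prod \mu \ge 1$ applied at finite level makes the $\ln$-mass of $dF_i$ on $(0,\epsilon)$ uniformly small in $i$ as $\epsilon \to 0$. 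Splitting $\int_{0+}^{\infty}\ln\lambda\, dF$ at a small $\epsilon$ and comparing (weak convergence and continuity of $\ln$ on $[\epsilon,C]$ for the outer part; the uniform estimate together with $\ln\lambda \le 0$ for the inner part) yields
\[
  \ln {\det}_{\caln(G)}(r_A) \;\ge\; \limsup_{i \to \infty} \ln {\det}_{\caln(G/G_i)}(r_{A_i}) \;\ge\; 0,
\]
the last inequality by the base case; so residually finite groups satisfy the conjecture. For residually amenable $G$ one replaces finite quotients by amenable quotients and uses F\o lner exhaustions inside each to return to finite pieces; for sofic $G$ a sofic approximation $G \to \operatorname{Sym}(X_i)$ furnishes integer matrices over $\IC[\operatorname{Sym}(X_i)]$ acting on $\IC^{X_i}$, to which the same two inputs and comparison apply. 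One also records the formal closure properties of the class of groups satisfying the conjecture -- stable under subgroups, directed unions, and extensions with amenable kernel or amenable quotient -- to package examples.

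\emph{Main obstacle.} The genuine difficulty is that there is no known way to manufacture the required finite (or amenable) approximations for an \emph{arbitrary} countable group: the conjecture would follow from every group being sofic, itself a major open problem, and no route bypassing soficity is available. A structural point is that $\lambda \mapsto \ln\lambda$ is only upper semicontinuous near $0$, so the method delivers only the one-sided bound $\det \ge 1$, never the value of the determinant; upgrading it would require a uniform control of the spectral density functions near $0$ along the approximation -- the same absence of a uniform spectral gap that obstructs naive approaches to the Approximation Conjecture.
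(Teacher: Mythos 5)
The labeled statement is a \emph{conjecture}, not a theorem: the paper offers no proof of it, only a list of the class $\calf$ of groups for which it is currently known, with references to Elek--Szab\'o, L\"uck, and Schick. You correctly recognise this, and your ``proof'' is in fact a survey of exactly the known partial results that the paper cites, so there is no gap against the paper -- there is simply nothing in the paper to compare a proof against.

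Taking the write-up on its own terms, your sketch is a fair account of the standard strategy. The base case is correct: for finite $G$ the matrix $B^{\ast}B$ is a positive semidefinite integer matrix, the product of its nonzero eigenvalues is (up to sign) an integer coefficient of $\det(tI - B^{\ast}B)$, hence a positive integer, so the Fuglede--Kadison determinant is a $(2|G|)$-th root of a positive integer and is $\ge 1$. The approximation step is also the right idea: weak convergence of spectral measures on a uniformly bounded interval, together with the finite-level lower bound $\ln\det_i \ge 0$, gives a uniform bound on the negative log-mass near zero, and upper semicontinuity of $\ln\det$ under weak convergence then yields $\ln\det_{\caln(G)}(r_A) \ge \limsup_i \ln\det_{\caln(G/G_i)}(r_{A_i}) \ge 0$. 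One small imprecision: what the base-case bound delivers is not that the $\ln$-mass of $dF_i$ on $(0,\epsilon)$ is ``uniformly small as $\epsilon \to 0$'' but that it is \emph{uniformly bounded} in $i$ (by $r\ln C$, say), which in turn forces $F_i(\epsilon) - F_i(0) \le r\ln C / (-\ln\epsilon)$ uniformly; it is this last estimate on the spectral density, not on the $\ln$-mass directly, that one feeds into the splitting argument. Your closing paragraph correctly identifies the genuine obstruction -- there is no known finite or amenable approximation scheme for a general group, soficity is open, and one-sided semicontinuity of $\ln$ near $0$ is why the method can never produce more than the inequality -- which is precisely why the statement remains a conjecture.
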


  The class $\calf$ of groups for which it is true has the following properties:
  \begin{enumerate}

  \item\label{rem:status_of_Determinant_Conjecture:trivial_group} \emph{Trivial group}.\\
    The trivial group belongs to $\calf$;

  \item\label{rem:status_of_Determinant_Conjecture:amenable_quotient}
    \emph{Amenable quotient}.\\
    Let $H \subset G$ be a normal subgroup. Suppose that $H \in \calf$ and the quotient
    $G/H$ is amenable. Then $G \in \calf$;

  \item\label{rem:status_of_Determinant_Conjecture:direct_limit}
  	\emph{Filtered colimits}.\\
  	The class $\calf$ is closed under countable filtered colimits;

  \item\label{rem:status_of_Determinant_Conjecture:inverse_limit}
   \emph{Cofiltered limits}.\\
   The class $\calf$ is closed under countable cofiltered limits;

  \item\label{rem:status_of_Determinant_Conjecture:subgroups}
    \emph{Subgroups}.\\
    If $H$ is isomorphic to a subgroup of a group $G$ with $G \in \calf$, then
    $H \in \calf$.

  \item\label{rem:status_of_Determinant_Conjecture:quotient_with_finite_kernel}
    \emph{Quotients with finite kernel}.\\
    Let $1 \to K \to G \to Q \to 1$ be an exact sequence of groups. If $K$ is finite and $G$
    belongs to $\calf$, then $Q$ belongs to $\calf$;

  \item\label{rem:status_of_Determinant_Conjecture:sofic_groups} \emph{Sofic groups}.\\
  Sofic groups belong to
    $\calf$.
  \end{enumerate}

  For the verification of the Determinant Conjecture for the groups above,
  see~\cite[Theorem~5]{Elek-Szabo(2005)},%
~\cite[Section~13.2 on pages~459~ff]{Lueck(2002)},~\cite[Theorem~1.21]{Schick(2001b)}.

  To sketch how large $\calf$ really is, let us mention that already the class of sofic groups is very large.  It is closed under direct and free products,
  taking subgroups, taking inverse and direct limits over directed index sets, and is closed
  under extensions with amenable groups as quotients and a sofic group as kernel.  In
  particular it contains all residually amenable groups.  One expects that there exists
  non-sofic groups but no example is known.  More information about sofic groups can be
  found for instance in~\cite{Elek-Szabo(2006)} and~\cite{Pestov(2008)}. More information
  about the Determinant Conjecture~\ref{con:Determinant_Conjecture} can be found
  in~\cite[Chapter~13]{Lueck(2002)} and~\cite{Schick(2001b)}.

   %%%%%%%%%%%%%%%%%%%%%%%%%%%%%%%%%%%%%%%%%%%%%%%%%%%%%%%%%%%%%%%%%%%%%%%%%%%%%%%%%
  %%%%%%%%%%%%%%%%%%%%%%%%  Section 6: Approximation  for finite index  normal chains %%%%%%%%%%%%%%%%
  %%%%%%%%%%%%%%%%%%%%%%%%%%%%%%%%%%%%%%%%%%%%%%%%%%%%%%%%%%%%%%%%%%%%%%%%%%%%%%%%%

  \typeout{------------------- Section 6: Approximation  for finite index  normal chains ------------------}

\section{Approximation for finite index normal chains}%
\label{sec:Approximation_for_finite_index_normal_chains}

Recall that the Approximation
Theorem~\ref{list:main_properties_of_rho2(widetildeX):approximation} predicts that
$L^2$-Betti numbers of spaces can be computed as limits of normalized ordinary Betti
numbers of certain towers of coverings. In this section we discuss this phenomenon for
more general invariants.

  %------------------------------------------------------------------------------------------

  \subsection{Basic setup for approximation for finite index normal chains}%
  \label{subsec:Basic_setup_for_approximation_for_finite_index_normal_chains}

  Let $G$ be a (discrete) group. A \emph{finite index normal chain} $\{G_i\}$  for $G$ is a  descending chain of subgroups
  \begin{eqnarray}
    & G = G_0 \supseteq G_1 \supseteq G_2 \supseteq \cdots &
    \label{finite_index_normal_chain}
  \end{eqnarray}
  such that $G_i$ is normal in $G$, the index $[G:G_i]$ is finite and $\bigcap_{i \ge 0} G_i = \{1\}$.

  Let $p \colon \overline{X} \to X$ be a $G$-covering. Put $X[i] := G_i\backslash \overline{X}$.
  We obtain a $[G:G_i]$-sheeted covering $p[i] \colon X[i] \to X$. Its total space $X[i]$
  inherits the structure of a finite $CW$-complex, a closed manifold, or a closed Riemannian
  manifold respectively if $X$ has the structure of a finite $CW$-complex, a closed manifold,
  or a closed Riemannian manifold respectively.

  Let $\alpha$ be a classical
  topological invariant such as the Euler characteristic, the signature, the $n$th Betti
  number with coefficients in the field $\IQ$ or $\IF_p$, torsion in the sense of
  Reidemeister or Ray-Singer,
  or the logarithm of the cardinality of the torsion subgroup of the $n$th homology group
  with integral coefficients. We want to study the sequence
  \begin{eqnarray*}
    \left(\frac{\alpha(X[i])}{[G:G_i]}\right)_{i \ge 0}.
    \label{alpha(X[i])/[G:G_i]}
  \end{eqnarray*}

  \begin{problem}[Approximation Problem]\label{pro:approximation_problem}\
    \begin{enumerate}

    \item Does the sequence converge?

    \item If yes, is the limit independent of the chain $\{G_i\}$?

    \item If yes, what is the limit?

    \end{enumerate}
  \end{problem}

  The hope is that the answer to the first two  questions is yes and the limit turns out to
  be an $L^2$-analogue $\alpha^{(2)}$ of $\alpha$ applied to the $G$-space $\overline{X}$, i.e., one can prove
  an equality of the type
  \begin{eqnarray}
   \lim_{i \to \infty} \frac{\alpha(X[i])}{[G:G_i]} = \alpha^{(2)}(\overline{X};\caln(G)).
  \label{expected_formula}
  \end{eqnarray}
  Here $\caln(G)$ stands for the group von Neumann algebra and is a reminiscence of the  fact that the $G$-action on
  $\overline{X}$ plays a role.
  Equation~\eqref{expected_formula} is often used to compute the $L^2$-invariant  $\alpha^{(2)}(\overline{X};\caln(G))$ by its
  finite-dimensional analogues $\alpha(X[i])$.
  On the other hand, it implies the existence of finite  coverings with large $\alpha(X[i])$, if  $\alpha^{(2)}(\overline{X};\caln(G))$
  is known to be positive.

  %----------------------------------------------------------

  \subsection{The Euler characteristic}\label{subsec:Euler_characteristic}

  The \emph{Euler characteristic}
  $\chi(X)$ of a finite $CW$-complex is
  multiplicative under finite coverings. Because  this implies \linebreak $\chi(X) =
  \frac{\chi(X[i])}{[G:G_i]}$, the answer in this case is yes to the  questions
  appearing in Problem~\ref{pro:approximation_problem}, and the limit is
  \begin{eqnarray}
    \lim_{i \to \infty} \frac{\chi(X[i])}{[G:G_i]} & = & \chi(X).
    \label{solution_for_chi(X)}
  \end{eqnarray}

  %----------------------------------------------------------

  \subsection{The Signature}\label{subsec:Signature}

  Next we consider the \emph{signature} $\sign(M)$ of a closed oriented topological $4k$-dimensional manifold $M$.
  It is known that it is multiplicative under finite coverings, however, the proof is more involved
  than the one for the Euler characteristic. It follows for instance from Hirzebruch's
  Signature Theorem, see~\cite{Hirzebruch(1970)}, or Atiyah's $L^2$-index
  theorem~\cite[(1.1)]{Atiyah(1976)} in the smooth case; for closed topological manifolds
  see Schafer~\cite[Theorem 8]{Schafer(1970)}.
  Since this implies $\sign(X) = \frac{\sign(X[i])}{[G:G_i]}$, each of the
  questions appearing in Problem~\ref{pro:approximation_problem} has a positive answer, and the limit is
  \begin{eqnarray}
    \lim_{i \to \infty} \frac{\sign(X[i])}{[G:G_i]} & = & \sign(X).
    \label{solution_for_sign(X)_manifolds}
  \end{eqnarray}

  The next level of generality is to pass from a oriented closed topological manifold to a oriented finite Poincar\'e
  complex, whose definition is due to Wall~\cite{Wall(1967)}.  For them the signature is
  still defined if the dimension is divisible by $4$.  There are Poincar{\'e} complexes $X$ for which
  the signature is not multiplicative under finite coverings,
  see~\cite[Example~22.28]{Ranicki(1992)},~\cite[Corollary~5.4.1]{Wall(1967)}.  Hence the
  situation is more complicated here.  Nevertheless, it turns out in this case each of the
  questions appearing in Problem~\ref{pro:approximation_problem} has a positive answer,  and the limit is
  \begin{eqnarray}
    \lim_{i \to \infty} \frac{\sign(X[i])}{[G:G_i]} & = & \sign^{(2)}(\overline{X};\caln(G)),
    \label{solution_for_signi(M)_Poincare}
  \end{eqnarray}
  where $\sign^{(2)}(\overline{X};\caln(G))$ denotes the $L^2$-signature, which is in general
  different from $\sign(X)$ for a finite Poincar\'e complex $X$.

  For more information and details we refer to~\cite{Lueck-Schick(2003),Lueck-Schick(2005)}.

  %----------------------------------------------------------

  \subsection{Approximation of $L^2$-Betti numbers in characteristic zero}%
  \label{subsec:Approximation_of_L2_-Betti_numbers_in_characteristic_zero}

  Fix a field $F$ of characteristic zero. We consider the \emph{$n$th Betti number
    with $F$-coefficients} $b_n(X;F) := \dim_F(H_n(X;F))$.  Note that $b_n(X;F) =
  b_n(X;\IQ) = \rk_{\IZ}(H_n(X;\IZ))$ holds, where $\rk_{\IZ}$ denotes the rank of a finitely
  generated abelian group. In this case each of the
  questions appearing in Problem~\ref{pro:approximation_problem} has a positive answer by the main
  result of L\"uck's article~\cite{Lueck(1994c)}.

  \begin{theorem}\label{the:approx_Betti_char_zero}
    Let $F$ be a field of characteristic zero, and let $X$ be a finite   $CW$-complex. Then for each finite index normal chain $\{ G_i \}$ we have
    \[
    \lim_{i \to \infty} \frac{b_n(X[i];F)}{[G:G_i]} = b^{(2)}_n(\overline{X};\caln(G)),
    \]
    where $b^{(2)}(\overline{X};\caln(G))$ denotes the $n$th $L^2$-Betti number.
  \end{theorem}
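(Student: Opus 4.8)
The plan is to follow Lück's original approximation argument, which reduces the statement to a spectral convergence result for a sequence of matrices over the group rings $\IZ[G/G_i]$ converging (in a suitable sense) to a matrix over $\caln(G)$. First I would fix a $G$-$CW$-structure on $\overline{X}$ with finitely many $G$-cells (pulled back from $X$), so that the cellular chain complex $C_*(\overline{X})$ is a finite complex of finitely generated free $\IZ G$-modules, and each differential is given by right multiplication with a matrix $A_n$ over $\IZ G$. For the covering $X[i] = G_i \backslash \overline{X}$ one obtains $C_*(X[i];F) = F[G/G_i] \otimes_{\IZ G} C_*(\overline{X})$, with differentials given by the image $A_n[i]$ of $A_n$ under $\IZ G \to \IZ[G/G_i] \subseteq F[G/G_i]$. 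Since everything is additive over the finitely many degrees, it suffices to prove the corresponding statement for the sequence of a single morphism, i.e., to show
\[
  \lim_{i \to \infty} \frac{\dim_F \ker(A_n[i] \otimes \id_F)}{[G:G_i]} = \dim_{\caln(G)}\bigl(\ker(r_{A_n})\bigr),
\]
and then combine over $n$ via the identity $b_n = \dim C_n - \rk(c_n) - \rk(c_{n+1})$ (and its $L^2$-analogue using additivity of $\dim_{\caln(G)}$ from Section~\ref{subsec:Group_von_Neumnan_algebras_and_their_dimension_functions}).

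The core of the argument is a spectral measure comparison. For each $i$ consider the positive semidefinite matrix $B_n[i] := A_n[i] A_n[i]^*$ acting on the finite-dimensional space $F[G/G_i]^{(\cdot)}$, and let $\mu_i$ be its normalized spectral counting measure (eigenvalue distribution divided by $[G:G_i]$); likewise let $\mu$ be the spectral measure of the positive operator $r_{A_n} r_{A_n}^*$ on $\caln(G)$ with respect to the canonical trace. The von Neumann dimension of the kernel is $\mu(\{0\})$, and the normalized dimension of $\ker(A_n[i])$ is $\mu_i(\{0\})$. I would first show that all the moments $\int \lambda^k \, d\mu_i(\lambda) = \frac{1}{[G:G_i]}\tr(B_n[i]^k)$ converge to $\int \lambda^k \, d\mu(\lambda) = \tr_{\caln(G)}((r_{A_n}r_{A_n}^*)^k)$ as $i \to \infty$; this is a direct computation because $\tr$ on $\IZ[G/G_i]$ counts coefficients at the identity coset, $\tr_{\caln(G)}$ counts coefficients at $1 \in G$, and the condition $\bigcap_i G_i = \{1\}$ guarantees that for fixed $k$ the relevant coefficient of the $k$-fold product stabilizes once $i$ is large. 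Hence $\mu_i \to \mu$ weakly (the measures are supported in a fixed compact interval $[0, C]$ since $\|A_n\|_1$ bounds the operator norm uniformly). Weak convergence alone gives only $\limsup_i \mu_i(\{0\}) \le \mu(\{0\})$ together with $\mu(\{0\}) \le \liminf$ would need a lower semicontinuity that fails in general; the decisive extra input is a uniform bound on the measures near $0$.

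The hard part — and this is the heart of Lück's theorem — is establishing that there is no escape of spectral mass to $0$, i.e., that $\mu(\{0\}) \le \liminf_i \mu_i(\{0\})$, equivalently that the log-determinants $\int_{0^+} \log \lambda \, d\mu_i(\lambda)$ are bounded below uniformly in $i$. This is exactly where the integrality of the matrices over $\IZ G$ is used: the Fuglede–Kadison-type determinant of $A_n[i] A_n[i]^*$ over $\IQ[G/G_i]$ is, up to the factor $[G:G_i]$, the logarithm of the product of the nonzero eigenvalues, and since $B_n[i]$ has integer entries and its characteristic polynomial has integer coefficients, the product of its nonzero eigenvalues is a positive integer, hence $\ge 1$; this yields $\int_{0^+}\log\lambda \, d\mu_i \ge 0$ for all $i$. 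Combined with the uniform upper spectral bound and weak convergence, a standard argument (pass to the distribution functions $F_i(\lambda) = \mu_i([0,\lambda])$, use that $\int_{0^+}^{C} \frac{F_i(\lambda) - F_i(0)}{\lambda}\,d\lambda$ is controlled, and take $\lambda \to 0$ after $i \to \infty$ carefully) forces $F_i(0) \to F(0)$, which is the claimed equality. I would then assemble the degree-wise statements into the theorem via additivity, noting that the passage from $F = \IQ$ to an arbitrary characteristic-zero field $F$ is immediate because $b_n(X[i];F) = b_n(X[i];\IQ)$ and the right-hand side does not involve $F$. The main obstacle is genuinely the no-mass-escape estimate near zero; everything else is bookkeeping with chain complexes and elementary trace computations.
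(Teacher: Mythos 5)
Your proposal is correct and is precisely the argument of L\"uck's original approximation theorem in the cited reference~\cite{Lueck(1994c)}: reduce to a single matrix over $\IZ G$, establish weak convergence of normalized spectral measures by a trace-moment computation using $\bigcap_i G_i = \{1\}$, and then rule out spectral mass accumulating at $0^+$ via the uniform lower bound $\int_{0^+}\log\lambda\,d\mu_i \ge 0$ coming from the integrality of the matrices (the product of the non-zero eigenvalues of $A_n[i]A_n[i]^*$ is a non-zero integer). The bookkeeping reduction over $n$ and the observation that the left-hand side is insensitive to the characteristic-zero field $F$ are both as in the original source, so this is the same route the paper implicitly takes.
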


  L\"oh and Uschold~\cite[Proposition~6.6]{Loeh-Uschold(2022)} prove  a quantitative version of
  Theorem~\ref{the:approx_Betti_char_zero}. Essentially they show
  \[
    \left| b^{(2)}_n(\overline{X};\caln(G)) -   \frac{b_n(X[i];F)}{[G:G_i]}\right|
    \le
    a \cdot \left(1 - \frac{1}{b\cdot i}\right)^{i^2} + \frac{a \cdot \log(b)}{\log(i)}
  \]
  for certain constants $a$ and $b$ depending on $X$.

  %----------------------------------------------------------

  \subsection{Approximation of $L^2$-Betti numbers in prime characteristic}%
  \label{subsec:Approximation_of_L2_-Betti_numbers_in_prime_characteristic}

  The situation is more complicated and unclear in prime characteristic.
  Fix a prime $p$. Let $F$ be a field of characteristic $p$. We consider the
  \emph{$n$th Betti number with $F$-coefficients} $b_n(X;F) := \dim_F(H_n(X;F))$.  Note
  that $b_n(X;F) = b_n(X;\IF_p)$ holds where $\IF_p$ is the field of $p$-elements.  In this
  setting a general answer to Problem~\ref{pro:approximation_problem} is only known in
  special cases.  The main problem is that one does not have an analogue of the von Neumann
  algebra in characteristic $p$ and the construction of an appropriate extended dimension
  function, see~\cite{Lueck(1998a)}, is not known in general.

  If $G$ is torsionfree elementary amenable, one gets the full positive answer
  by Linnell-L\"uck-Sauer~\cite[Theorem~0.2]{Linnell-Lueck-Sauer(2011)}, where more
  explanations, e.g., about Ore localizations, are given and actually virtually torsionfree elementary
  amenable groups are considered.

  \begin{theorem}\label{the:dim_approximation_over_fields}
    Let $F$ be a field (of arbitrary characteristic) and $X$ be a connected finite
    $CW$-complex.  Let $G$ be a torsionfree elementary amenable group. Then for each finite index normal chain $\{ G_i \}$:
    \[
    \dim_{FG}^{\Ore} \bigl(H_n(\overline{X};F)\bigr) = \lim_{n \to \infty}
    \frac{b_n(X[i];F)}{[G:G_n]}.
    \]
  \end{theorem}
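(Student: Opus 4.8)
The plan is to reduce the statement to a convergence result for the normalized $F$-ranks of a single matrix over $FG$, and to prove that by induction along the elementary amenable hierarchy.

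\textbf{Setup: the Ore dimension.} Since $G$ is torsionfree elementary amenable, the group ring $FG$ has no zero divisors (Kropholler--Linnell--Moody), and, being the group ring of an amenable group, it satisfies the Ore condition with respect to its nonzero elements. Hence $FG$ admits a skew field of fractions $Q=Q(FG)$, and the localization $FG\to Q$ is flat. For a finitely generated $FG$-module $M$ one sets $\dim_{FG}^{\Ore}(M):=\dim_{Q}(Q\otimes_{FG}M)$ and extends this to arbitrary modules by the supremum over finitely generated submodules; flatness of $Q$ makes this function additive on short exact sequences and compatible with directed colimits. We may thus work throughout with $\dim_{FG}^{\Ore}$, which is the invariant appearing on the left-hand side of the theorem.

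\textbf{Reduction to one matrix.} Let $C_\ast=C_\ast(\overline{X})$ be the cellular $FG$-chain complex of $\overline X$; it is a finite complex of finitely generated free $FG$-modules, and $C_\ast[i]:=F[G/G_i]\otimes_{FG}C_\ast$ is the cellular $F$-chain complex of $X[i]$, so $b_n(X[i];F)=\dim_F H_n(C_\ast[i])$. For a finite complex of finite-dimensional vector spaces one has $\dim H_n=\dim C_n-\rk(\partial_n)-\rk(\partial_{n+1})$; applying this over $F$ to $C_\ast[i]$ and over $Q$ to $Q\otimes_{FG}C_\ast$, and using flatness (which gives $\dim_{FG}^{\Ore}H_n(C_\ast)=\dim_Q H_n(Q\otimes_{FG}C_\ast)$), the assertion of the theorem becomes equivalent to the following statement for each matrix $A\in M_{m,k}(FG)$ occurring as a differential of $C_\ast$: writing $r_A\colon FG^m\to FG^k$ for right multiplication and $r_A[i]\colon F[G/G_i]^m\to F[G/G_i]^k$ for its reduction,
\[
\lim_{i\to\infty}\frac{\dim_F\ker\bigl(r_A[i]\bigr)}{[G:G_i]}\;=\;\dim_{FG}^{\Ore}\ker(r_A).
\]

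\textbf{Proof of the matrix statement.} The lower bound $\liminf_i\ \ge\ \dim_{FG}^{\Ore}\ker(r_A)$ is the more elementary half: choose $v_1,\dots,v_r\in\ker(r_A)\subseteq FG^m$ whose images span $Q\otimes_{FG}\ker(r_A)$ over $Q$ (clearing denominators), note that their reductions modulo $G_i$ lie in $\ker(r_A[i])$, and use amenability of $G$ together with $\bigcap_iG_i=\{1\}$ to see, via a Følner-type argument, that these reductions and all but a vanishing fraction of their $G$-translates remain $F$-linearly independent, so that $\dim_F\ker(r_A[i])\ge(r-o(1))\cdot[G:G_i]$. The upper bound $\limsup_i\ \le\ \dim_{FG}^{\Ore}\ker(r_A)$ is the heart of the matter, and here the argument departs from the characteristic-zero case of Theorem~\ref{the:approx_Betti_char_zero}: there one bounds the low-lying spectrum of the combinatorial Laplacians $r_A^\ast r_A$ over $\IC[G/G_i]$ using positivity and the Determinant Conjecture~\ref{con:Determinant_Conjecture}, but over a general field $F$ there is no inner product and hence no spectral density function to exploit. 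Instead one argues by induction along the transfinite construction of elementary amenable groups. The directed-union clause is handled by cofinality of $\dim_{FG}^{\Ore}$ (and by the observation that the matrix $A$ and the required estimates involve only finitely much of $G$ at a time), which reduces the base case to $G$ finitely generated abelian, i.e. $G\cong\IZ^h$, where $FG$ is a commutative Noetherian domain and the normalized $F$-kernel dimensions can be controlled directly, e.g. through ideals of minors in $F[\IZ^h]$. The inductive step amounts to an extension $1\to K\to G\to\IZ\to1$: one writes $FG=FK\rtimes\IZ$, enlarges it to the skew Laurent ring $Q(FK)\rtimes\IZ$ whose skew field of fractions is $Q(FG)$, and then averages the estimate furnished by the induction hypothesis for $K$ over long intervals in the $\IZ$-direction, once more using amenability of $G$.

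\textbf{Main obstacle.} The crux is the upper bound in the matrix statement: ruling out that the kernels over the finite quotients are asymptotically larger than the Ore rank predicts, without the positivity that drives the characteristic-zero proof. This is precisely why the hypothesis ``torsionfree elementary amenable'' cannot be dropped — the argument leans on the rigid structure of $Q(FG)$ for such $G$ and on the inductive hierarchy — and why the corresponding statement over $\IF_p$ is open for general amenable groups. One should also note that, unlike in characteristic zero, the limit on the right-hand side is the purely algebraically defined Ore dimension, which need not coincide with $b_n^{(2)}(\overline X;\caln(G))$.
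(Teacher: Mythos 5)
The paper itself does not prove this theorem; it cites Linnell--L\"uck--Sauer~\cite[Theorem~0.2]{Linnell-Lueck-Sauer(2011)}. Your setup matches that source: the Ore localization $Q(FG)$ exists because $FG$ has no zero divisors (Kropholler--Linnell--Moody) and amenable domains satisfy the Ore condition, the left-hand side is computed through this skew field, and the theorem reduces by the usual Euler-characteristic bookkeeping to the single-matrix statement $\dim_F\ker\bigl(r_A[i]\bigr)/[G:G_i]\to\dim_{FG}^{\Ore}\ker(r_A)$. Your F\o{}lner sketch for the $\liminf$ inequality is also reasonable.

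The upper bound is where your proposal both diverges from the cited proof and has a genuine gap. Linnell--L\"uck--Sauer do \emph{not} induct along the elementary amenable hierarchy; the hypothesis ``torsionfree elementary amenable'' enters only through Kropholler--Linnell--Moody to secure the Ore domain, and the convergence argument itself is carried out for amenable $G$ directly, via F\o{}lner-set and quasi-tiling arguments comparing the quotients $G/G_i$ with a F\o{}lner exhaustion, in the spirit of Elek's rank function on amenable group algebras. Your inductive route is not only a different strategy but is incomplete at its decisive step. First, reducing to finitely generated $G$ by directed unions does not reduce you to iterated $\IZ$-extensions of abelian groups: the defining closure operations allow extensions with arbitrary elementary amenable quotient, so the class of finitely generated torsionfree elementary amenable groups is strictly larger than the poly-$\IZ$ world your base-and-step structure covers. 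Second, even for an honest $\IZ$-extension $1\to K\to G\to\IZ\to 1$, the phrase ``average the estimate from the induction hypothesis for $K$ over long intervals in the $\IZ$-direction'' is precisely the missing ingredient in disguise: one needs an a priori reason the normalized kernel dimensions over $F[G/G_i]$ cannot overshoot the Ore dimension by a non-vanishing amount, and neither the skew-Laurent structure $Q(FK)\rtimes\IZ$ nor the induction hypothesis for the induced chain $\{G_i\cap K\}$ in $K$ supplies that bound without some replacement for the positivity/log-det inequality that drives the characteristic-zero proof. You identify the obstruction correctly (no inner product, no spectral density function), but the hierarchy induction you sketch neither matches the cited proof nor closes that gap.
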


  Note that Theorem~\ref{the:dim_approximation_over_fields} is consistent with
  Theorem~\ref{the:approx_Betti_char_zero} since for a field $F$ of characteristic zero
  and  a torsionfree elementary amenable group $G$ we
  have $b_n^{(2)}(\overline{X};\caln(G)) = \dim_{FG}^{\Ore}
  \bigl(H_n(\overline{X};F)\bigr)$. The latter equality follows
  from~\cite[Theorem~6.37 on page~259, Theorem~8.29 on page~330,
  Lemma~10.16 on page~376, and Lemma~10.39 on page~388]{Lueck(2002)}.

  Here is another special case taken from
  Bergeron-L\"uck-Linnell-Sauer~\cite{Bergeron-Linnell-Lueck-Sauer(2014)},
  (see also Calegari-Emerton~\cite{Calegari-Emerton(2009bounds), Calegari-Emerton(2011)}), where we know
  the answer only for special chains.  Let $p$ be a prime, let $n$ be a positive integer,
  and let $\phi\colon G \rightarrow \GL_n (\IZ_p)$ be a homomorphism, where $\IZ_p$ denotes
  the $p$-adic integers. The closure of the image of $\phi$, which is denoted by $\Lambda$,
  is a $p$-adic analytic group admitting an exhausting filtration by open normal subgroups
  $\Lambda_i = \ker \left( \Lambda \rightarrow \GL_n (\IZ / p^i \IZ) \right)$. Put $G_i = \phi^{-1} (\Lambda_i )$.

  \begin{theorem}\label{the:BLLS}
    Let $F$ be a field (of arbitrary characteristic). Put $d= \dim  (\Lambda)$.
    Let $X$ be a finite $CW$-complex.  Then for any integer $n$ and
    as $i$ tends to infinity, we have:
    \[
    b_n(X[i];F) = b_n^{(2)}(\overline{X};F) \cdot [G : G_i] + O\left([G :
      G_i]^{1-{1/d}} \right),
    \]
    where $b_n^{(2)}(\overline{X};F)$ is the $n$th mod $p$ $L^2$-Betti numbers occurring
    in~\cite[Definition~1.3]{Bergeron-Linnell-Lueck-Sauer(2014)} which is defined using
    homology coefficients in the Iwasawa algebra of $G$.  In particular
    \[
    \lim_{i \to \infty} \frac{b_n(X[i] ;F)}{[G:G_i]} = b_n^{(2)}(\overline{X};F).
    \]
  \end{theorem}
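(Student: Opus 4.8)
The plan is to reduce the statement to a single growth estimate over the Iwasawa algebra $F[[\Lambda]]$ and to prove that estimate by Hilbert--Samuel methods, following~\cite{Bergeron-Linnell-Lueck-Sauer(2014)} (compare~\cite{Calegari-Emerton(2009bounds)}); we concentrate on $\operatorname{char}(F) = p$. First I would algebraicize: put $C_* := F \otimes_{\IZ} C^{\operatorname{cell}}_*(\overline{X})$, a finite complex of finitely generated free $FG$-modules, so that $b_n(X[i];F) = \dim_F H_n\bigl(F[G/G_i] \otimes_{FG} C_*\bigr)$ since $X[i] = G_i\backslash\overline{X}$. As $\phi(G)$ is dense in $\Lambda$ and $\Lambda_i$ is open, $G \to \Lambda/\Lambda_i$ is onto with kernel $G_i$, so $F[G/G_i] \cong F[\Lambda/\Lambda_i]$ as rings under $FG$ and $[G:G_i] = [\Lambda:\Lambda_i]$. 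Via $FG \to F[[\Lambda]]$ set $D_* := F[[\Lambda]] \otimes_{FG} C_*$, a finite complex of finitely generated free $F[[\Lambda]]$-modules; since $C_*$ is free, $F[\Lambda/\Lambda_i]\otimes_{F[[\Lambda]]} D_* \cong F[G/G_i]\otimes_{FG} C_*$, and I would define $b_n^{(2)}(\overline{X};F) := \rk_{F[[\Lambda]]}\bigl(H_n(D_*)\bigr)$, the rank over the Ore skew field of fractions of $F[[\Lambda_0]]$ for an open uniform normal subgroup $\Lambda_0 \trianglelefteq \Lambda$, and check it is independent of $\Lambda_0$ and matches~\cite[Definition~1.3]{Bergeron-Linnell-Lueck-Sauer(2014)}.

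Next, since $F[[\Lambda]]$ is Noetherian of global dimension $d = \dim(\Lambda)$ and $D_*$ is a bounded complex of free modules, there is a hyper-Tor (universal coefficient) spectral sequence $E^2_{j,q} = \Tor^{F[[\Lambda]]}_j\bigl(F[\Lambda/\Lambda_i],H_q(D_*)\bigr) \Rightarrow H_{j+q}\bigl(F[\Lambda/\Lambda_i]\otimes_{F[[\Lambda]]} D_*\bigr)$ with finitely many nonzero entries, their number bounded independently of $i$. Tracking the incoming differentials to the bottom row gives
\[
\Bigl| b_n(X[i];F) - \dim_F\bigl(F[\Lambda/\Lambda_i]\otimes_{F[[\Lambda]]} H_n(D_*)\bigr)\Bigr| \le \sum_{j\ge1}\sum_{q\ge0} \dim_F \Tor^{F[[\Lambda]]}_j\bigl(F[\Lambda/\Lambda_i],H_q(D_*)\bigr).
\]
So it suffices to prove that for every finitely generated $F[[\Lambda]]$-module $M$ and every $j\ge0$
\[
\dim_F \Tor^{F[[\Lambda]]}_j\bigl(F[\Lambda/\Lambda_i],M\bigr) = \delta_{j,0}\cdot\rk_{F[[\Lambda]]}(M)\cdot[\Lambda:\Lambda_i] + O\bigl([\Lambda:\Lambda_i]^{1-1/d}\bigr),
\]
and then apply this with $M = H_q(D_*)$ for all $q$, retaining the main term only from $j = 0$, $q = n$; together with $[G:G_i] = [\Lambda:\Lambda_i]$ this yields the theorem.

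To prove the displayed estimate I would first reduce to $\Lambda$ uniform: over an open uniform normal subgroup $\Lambda_0$ the ring $F[[\Lambda]]$ is free of finite rank over $F[[\Lambda_0]]$, so the $\Tor$ groups may be computed over $F[[\Lambda_0]]$ up to a bounded rescaling which does not change the error exponent. For $\Lambda$ uniform of dimension $d$ and $\operatorname{char}(F) = p$, Lazard's theorem identifies the associated graded of the $\omega$-adic (augmentation) filtration on $F[[\Lambda]]$ with a polynomial ring $F[x_1,\dots,x_d]$ in $d$ variables, and the congruence quotient $F[\Lambda/\Lambda_i]$ is sandwiched between $\omega$-adic truncations of length linear in $p^i$; consequently the relevant $\dim_F$'s become Hilbert--Samuel counts over $F[x_1,\dots,x_d]$. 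A module $M$ of full filtration dimension $d$ then has $\dim_F\bigl(F[\Lambda/\Lambda_i]\otimes_{F[[\Lambda]]} M\bigr)$ with leading term the Hilbert--Samuel multiplicity of $M$ times $[\Lambda:\Lambda_i]$, which equals $\rk_{F[[\Lambda]]}(M)\cdot[\Lambda:\Lambda_i]$; the torsion submodule of $M$ has filtration dimension $\le d-1$, hence growth $O\bigl([\Lambda:\Lambda_i]^{(d-1)/d}\bigr) = O\bigl([\Lambda:\Lambda_i]^{1-1/d}\bigr)$; and for $j \ge 1$ the module $\Tor^{F[[\Lambda]]}_j(F[\Lambda/\Lambda_i],M)$ is computed by a Chevalley--Eilenberg (Koszul-type) complex of length $d$ whose homology is torsion of filtration dimension $\le d-1$, so again of size $O\bigl([\Lambda:\Lambda_i]^{1-1/d}\bigr)$.

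The hard part will be precisely this last step: relating the honest congruence quotients $F[\Lambda/\Lambda_i]$ to $\omega$-adic truncations and bounding all the resulting lower-order contributions by $[\Lambda:\Lambda_i]^{1-1/d}$ with the correct exponent $1 - 1/d$; this is the quantitative input about $p$-adic analytic groups developed in~\cite{Bergeron-Linnell-Lueck-Sauer(2014)}, closely parallel to the completed-homology bounds of Calegari--Emerton~\cite{Calegari-Emerton(2009bounds)}. One also has to verify that $\rk_{F[[\Lambda]]}\bigl(H_n(D_*)\bigr)$ is well-defined and normalized to match~\cite[Definition~1.3]{Bergeron-Linnell-Lueck-Sauer(2014)}. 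Finally, in characteristic zero the bare limit formula is already Theorem~\ref{the:approx_Betti_char_zero}, with $b_n^{(2)}(\overline{X};F) = b_n^{(2)}(\overline{X};\caln(G))$; the rate $O\bigl([G:G_i]^{1-1/d}\bigr)$ is then recovered by running the same Hilbert--Samuel estimate after base change to a suitable completed group algebra, or directly on the quotients $F[\Lambda/\Lambda_i]$.
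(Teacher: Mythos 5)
The survey paper does not prove Theorem~\ref{the:BLLS}; it is quoted verbatim from Bergeron--Linnell--L\"uck--Sauer~\cite{Bergeron-Linnell-Lueck-Sauer(2014)}, so there is no in-paper proof to compare against. With that caveat, your outline is a credible reconstruction of the BLLS argument: pass to a finite free $FG$-complex, replace $FG$ by the Iwasawa algebra $F[[\Lambda]]$ along $\phi$, define $b_n^{(2)}(\overline X;F)$ as the rank of $H_n(D_*)$ over the Ore skew field of fractions of the Iwasawa algebra of an open uniform $\Lambda_0$, reduce the theorem to a Tor-growth estimate, and prove that estimate by Lazard theory together with Hilbert--Samuel multiplicity over $\mathrm{gr}\,F[[\Lambda_0]] \cong F[x_1,\dots,x_d]$, sandwiching $F[\Lambda/\Lambda_i]$ between $\omega$-adic truncations. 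This is the right shape and you correctly flag the quantitative sandwiching step as the true technical content.

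Two cautions about details in the middle of the argument. First, the clause asserting that for $j\ge 1$ the modules $\Tor^{F[[\Lambda]]}_j(F[\Lambda/\Lambda_i],M)$ are ``computed by a Chevalley--Eilenberg (Koszul-type) complex'' is not quite right as stated: $F[[\Lambda]]$ is non-commutative and $\Lambda_i$ is non-abelian, so there is no honest Koszul resolution of $F[\Lambda/\Lambda_i]$. What BLLS actually use is that $F[[\Lambda]]$ is Auslander regular Noetherian of finite global dimension, so $M$ has a finite resolution by finitely generated projectives; tensoring that resolution with $F[\Lambda/\Lambda_i]$ and applying the Hilbert--Samuel estimates to the terms gives the $O\bigl([\Lambda:\Lambda_i]^{1-1/d}\bigr)$ bound on the higher Tors and on the contribution of the torsion submodule to $\Tor_0$. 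The ``filtration dimension $\le d-1$'' heuristic is correct in spirit but should be run through this resolution, not through a Koszul complex. Second, your universal-coefficient spectral sequence reduction and the resulting inequality are fine, but to make the error uniform you should explicitly use that $D_*$ is a bounded complex of finitely generated free modules over the Noetherian ring $F[[\Lambda]]$, so that only finitely many $H_q(D_*)$ are nonzero and each is finitely generated; otherwise ``$\sum_{j\ge 1}\sum_{q\ge 0}$'' is not a priori a finite sum with controlled constants. Neither of these affects the correctness of the overall plan, which does match the route taken in~\cite{Bergeron-Linnell-Lueck-Sauer(2014)} (and, for completeness, in the char~$0$ case BLLS also run the Iwasawa-theoretic argument over $\IZ_p[[\Lambda]]$ rather than deducing the rate from L\"uck's approximation theorem, exactly as you indicate at the end).
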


  Returning to the setting of arbitrary finite index normal chain $(G_i)_{i \ge 0}$, we get by the universal
  coefficient theorem $b_n(X[i];\IQ) \le b_n(X[i];F)$ for any field $F$ and hence by
  Theorem~\ref{the:approx_Betti_char_zero} the inequality
  \[
    \liminf_{i \to \infty} \frac{b_n(X[i] ;F)}{[G:G_i]} \ge
    b_n^{(2)}(\overline{X};\caln(G)).
  \]
  If $p$ is a prime and we additionally assume that each index $[G:G_i]$ is a $p$-power,
  then the sequence $\frac{b_n(X[i] ;F)}{[G:G_i]}$ is monotone decreasing and in particular
  $\lim_{i \to \infty} \frac{b_n(X[i] ;F)}{[G:G_i]}$ exists,
  see~\cite[Theorem~1.6]{Bergeron-Linnell-Lueck-Sauer(2014)}.

  \begin{question}[Approximation in prime characteristic]\label{que:Approximation_in_zero_and_prime_characteristic}
    Does the sequence $\frac{b_n(X[i] ;F)}{[G:G_i]}$ converge and is the limit
    $\lim_{i \to \infty} \frac{b_n(X[i] ;F)}{[G:G_i]}$ independent of the chain $\{G_i\}$ for all
    fields $F$ and $n \ge 0$, provided that $X$ is finite and $\overline{X}$ is
    contractible?
  \end{question}

  \begin{remark}\label{rem:not_the_L2-Betti_number}
    The third author conjectured in the situation of
    Question~\ref{que:Approximation_in_zero_and_prime_characteristic} that the
    $\lim_{i \to \infty} \frac{b_n(X[i] ;F)}{[G:G_i]}$ is equal to
    $b_n^{(2)}(X;\caln(G))$ for every field $F$ and for all finite index normal chains $\{G_i\}$, see~\cite[Conjecture~3.4 on
    page~275]{Lueck(2016_l2approx)}.  This is true in characteristic zero by
    Theorem~\ref{the:approx_Betti_char_zero} and if $G$ is torsionfree elementary amenable by
    Theorem~\ref{the:dim_approximation_over_fields}, but not in prime characteristic by
    Avramidi-Okun-Schreve~\cite[Corollary~2]{Avramidi-Okun-Schreve(2021)}.  Namely,
    Avramidi-Okun-Schreve~\cite[Theorem~1]{Avramidi-Okun-Schreve(2021)} prove that for a
    right-angled Artin group $A_L$ with defining flag complex $L$ and any field $F$ the limit
    $\lim_{i \to \infty} \frac{b_n(X[i] ;F)}{[G:G_i]}$ exists, is independent of the
    chain $\{G_i\}$, and actually agrees with the reduced Betti number $\overline{b}_n(L;F)$ of $L$
    with coefficients in $F$. Furthermore, they construct examples where
    $\overline{b}_3(L;\IQ) = 0$ and $\overline{b}_3(L;\IF_2) = 1$ hold. These counterexamples do not
    exist in degree $n =1$.
  \end{remark}

  \begin{remark}[$L^2$-Betti numbers in finite characteristic via skew
    fields]\label{rem:embedding_group_ring_skew_field}
    Recall from Theorem~\ref{the:Main_properties_of_cald(G)} that if a torsionfree group $G$
    satisfies the Atiyah conjecture, then the group ring $\IC G$ embeds into a skew field
    $\cald(G)$ and $L^2$-Betti numbers are the Betti numbers with coefficients in this
    skew field.  Motivated by this, one might hope that for a torsionfree group $G$ and
    any field $F$ there exists a skew field $\cald_F(G)$ together with an embedding
    $F G \hookrightarrow \cald_F(G)$ which can be used to define $L^2$-Betti numbers with
    coefficients in $F$ as
    $b^{(2)}_n(\overline{X}; F) \coloneqq \dim_{\cald_F(G)}H_n^G(\overline{X};
    \cald_F(G))$.  An approach to Question~\ref{que:Approximation_in_zero_and_prime_characteristic} would then be to show that
    the sequence $\frac{b_n(X[i] ;F)}{[G:G_i]}$ converges to $b^{(2)}_n(\overline{X}; F)$
    for $i \to \infty$.

    Jaikin-Zapirain constructs such embeddings in~\cite[Corollary
    1.3]{Jaikin-Zapirain(2021)} for large classes of groups.  He shows that such an
    embedding of $FG$ into a skew field with very nice properties (it is a Hughes free
    division ring and the universal division ring of fractions of $FG$) exists if $G$ is
    residually (locally indicable amenable).  Combining the approximation results~\cite[Theorem 1.2]{Jaikin-Zapirain(2021)}
    and~\cite[Theorem~0.2]{Linnell-Lueck-Sauer(2011)} it is easy to check that in this
    situation the equality
    \begin{equation}
      b^{(2)}_n(\overline{X}; F) = \inf \left\{ \frac{b_n(\overline{X}/H; F)}{[G:H]} : H \le G, [G:H] < \infty \right\} \label{eq:approximation_skewfield}
    \end{equation}
    holds.  In the case $F = \IQ$ of the classical approximation theorem the easier to
    prove Kazhdan's inequality predicts that $L^2$-Betti numbers are bounded by
    \begin{equation*}
      b^{(2)}_n(\overline{X}; \caln(G)) \ge \limsup_{i \to \infty} \frac{b_n(X[i]; \IQ)}{[G:G_i]}
    \end{equation*}
    for any finite index normal chain $(G_i)$ for $G$.  It is not clear, however, whether
    an analogue of this holds in the present setting.  This would imply
    that~\eqref{eq:approximation_skewfield} still holds when one replaces $\inf$ by the limit
    over a finite index normal chain.
  \end{remark}

  %----------------------------------------------------------

  \subsection{Torsion}%
  \label{subsec:Torsion}

  For a detailed discussion of approximation for finite index normal chains for torsion invariants
  such as the Ray-Singer torsion or the integral torsion we refer for
  instance to~\cite[Sections~8 -- 10]{Lueck(2016_l2approx)} and also to
  Subsection~\ref{subsec:Approximation_of_L2_-torsion}.

  %----------------------------------------------------------

  \subsection{Homological torsion growth and $L^2$-torsion}%
  \label{subsec:Torsion_growth_and_L2-torsion}

  The following conjecture is taken from~\cite[Conjecture~1.12~(2)]{Lueck(2013l2approxfib)}.
  For locally symmetric spaces it reduces to the conjecture of Bergeron and
  Venkatesh~\cite[Conjecture~1.3]{Bergeron-Venkatesh(2013)}.

  \begin{conjecture}[Homological torsion growth and $L^2$-torsion]%
  \label{con:Homological_torsion_growth_and_L2-torsion}
  Let $M$ be an aspherical closed manifold.

     Then we get for  any natural number $n$ with $2n +1 \not= \dim(M)$
      \[
      \lim_{i \to \infty} \;\frac{\ln\big(\bigl|\tors\bigl(H_n(M[i];\IZ)\bigr)\bigr|\bigr)}{[G:G_i]} = 0.
      \]
      If the dimension $\dim(M) = 2m+1$ is odd, then $\widetilde{M}$ is $\det$-$L^2$-acyclic and we get
      \[
      \lim_{i \to \infty} \;\frac{\ln\big(\bigl|\tors\bigl(H_m(M[i];\IZ)\bigr)\bigr|\bigr)}{[G:G_i]} = (-1)^m \cdot
      \rho^{(2)}(\widetilde{M}).
      \]
    \end{conjecture}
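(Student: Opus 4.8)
Since the final statement is a conjecture, what follows is the natural strategy; it reduces the conjecture to three assertions about the finite covers $M[i]$, two of which are themselves open. First fix a finite $CW$-model $X$ for $M$, so that $X[i]:=G_i\backslash\widetilde{X}$ is a finite $CW$-complex with $H_\ast(X[i];\IZ)\cong H_\ast(M[i];\IZ)$ and cellular $\IZ$-chain complex $C_\ast(X[i])=\IZ[G/G_i]\otimes_{\IZ G}C_\ast(\widetilde{X})$, where $C_\ast(\widetilde{X})$ is a finite complex of finitely generated free $\IZ G$-modules. In odd dimension $\dim M=2m+1$ the assertion that $\widetilde{M}$ is $\det$-$L^2$-acyclic is exactly the conjunction of the Singer Conjecture~\ref{con:Singer_Conjecture} (which in odd dimensions predicts $b^{(2)}_n(\widetilde{M})=0$ for all $n$) with the Determinant Conjecture~\ref{con:Determinant_Conjecture} for $\pi_1(M)$; assume both, so that $\rho^{(2)}(\widetilde{M})=-\sum_n(-1)^n\ln{\det}_{\caln(G)}(\id\otimes c_n)$ is defined. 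Equip $C_n(X[i])$ with the inner product making the cellular basis orthonormal, let $\Delta_n[i]$ be the combinatorial Laplacian, let $\tau(X[i])=\exp\bigl(\tfrac12\sum_n(-1)^n\,n\,\ln{\det}'(\Delta_n[i])\bigr)$ be the associated Reidemeister--Franz torsion (with ${\det}'$ the product of the nonzero eigenvalues), and let $R_n[i]$ be the degree-$n$ regulator of $X[i]$, i.e.\ the covolume of $H_n(X[i];\IZ)/\tors$ inside $H_n(X[i];\IR)$ for the inner product induced by harmonic chains. Then one has the elementary, metric-independent identity
\[
\sum_{n\ge 0}(-1)^n\ln\bigl|\tors H_n(X[i];\IZ)\bigr|=-\ln\tau(X[i])+\sum_{n\ge 0}(-1)^n\ln R_n[i].
\]

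The conjecture would follow from three inputs fed through this identity. (i) \emph{Approximation of Fuglede--Kadison determinants}: $\tfrac1{[G:G_i]}\ln{\det}'(\Delta_n[i])\to\ln{\det}_{\caln(G)}(\Delta_n(\widetilde{X}))$ for every $n$, whence $-\tfrac1{[G:G_i]}\ln\tau(X[i])\to\rho^{(2)}(\widetilde{M})$, since the $L^2$-torsion of a $\det$-$L^2$-acyclic complex equals $-\tfrac12\sum_n(-1)^n n\ln{\det}_{\caln(G)}(\Delta_n)$. Here the inequality ``$\limsup\le$'' holds because $\widetilde{M}$ is $L^2$-acyclic, so $\Delta_n(\widetilde{X})$ has no spectral mass at zero, combined with L\"uck's spectral-density approximation theorem~\cite{Lueck(1994c)}; the reverse inequality is the approximation theorem for determinants, which is closely tied to the Determinant Conjecture~\ref{con:Determinant_Conjecture} and is known when $\pi_1(M)$ is sofic, in particular for all residually amenable groups, cf.~\cite{Elek-Szabo(2005), Schick(2001b)}. (ii) \emph{Subexponential regulator growth}: $\ln(R_n[i])/[G:G_i]\to 0$ for every $n$. (iii) \emph{Degreewise vanishing of torsion} away from the middle: $\ln|\tors H_n(X[i];\IZ)|/[G:G_i]\to 0$ whenever $2n+1\ne\dim M$ --- this is the first assertion of the conjecture, and one attacks it through the same identity applied to subcomplexes of $C_\ast(\widetilde{X})$ together with the Poincar\'e-duality isomorphism $\tors H_n(X[i];\IZ)\cong\tors H_{\dim M-1-n}(X[i];\IZ)$, which pairs the off-middle degrees and, granting (i) and (ii), reduces the alternating sum in the identity to its single middle term. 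With all three inputs in hand, dividing the identity by $[G:G_i]$ and letting $i\to\infty$ isolates the degree-$m$ contribution and identifies its growth rate as $(-1)^m\rho^{(2)}(\widetilde{M})$; when $\dim M$ is even the conjecture asserts only the vanishing statement, to which a parallel bookkeeping reduces it, using inputs (i) and (ii) and the vanishing of the relevant even-dimensional $L^2$-torsion from Poincar\'e duality, Theorem~\ref{thm:main_properties_of_rho2(widetildeX)}~\eqref{list:main_properties_of_rho2(widetildeX):Poincare_duality}.

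The main obstacle is the coupled pair of inputs (ii) and (iii): there is no general method for bounding covolumes of integral homology lattices along a tower, nor for ruling out exponential torsion growth outside the middle dimension, and these are genuine unknowns even for arithmetic locally symmetric spaces, where the conjecture specialises to the conjecture of Bergeron--Venkatesh~\cite{Bergeron-Venkatesh(2013)} and the strongest available results are conditional or confined to congruence towers of a prescribed shape. Note that even the Singer Conjecture only yields, via Theorem~\ref{thm:main_properties_of_rho2(widetildeX)}~\eqref{list:main_properties_of_rho2(widetildeX):approximation}, that the free ranks $b_n(X[i];\IQ)$ grow sublinearly, which says nothing about the size of the regulators. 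In the smooth setting one can replace $\ln\tau(X[i])$ by Ray--Singer analytic torsion via the Cheeger--M\"uller theorem and drive the determinant approximation with uniform lower bounds on the small Laplace spectrum, making (i) unconditional in many cases, but this leaves the regulators untouched. Consequently the conjecture is presently within reach only when (ii) and (iii) are controlled for free --- most notably when $b_n(X[i];\IQ)$ stays bounded along the tower in all relevant degrees, so that $H_\ast(X[i];\IZ)$ carries no free part to regulate beyond $H_0$ and $H_{\dim M}$ and (iii) becomes essentially vacuous, together with $\pi_1(M)$ sofic --- which already covers substantial families of hyperbolic manifolds and, in dimension $\le 3$, combines with Thurston geometrization and the formula $\rho^{(2)}(\widetilde{M})=-\tfrac1{6\pi}\vol(M)$ for closed hyperbolic $3$-manifolds from Theorem~\ref{thm:main_properties_of_rho2(widetildeX)}~\eqref{list:main_properties_of_rho2(widetildeX):hyperbolic}.
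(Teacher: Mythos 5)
Since the statement is a conjecture, the paper contains no proof of it, and your proposal is an organizational sketch of a strategy rather than a proof. Your decomposition through the metric-independent identity relating $\sum_n (-1)^n\ln|\tors H_n(X[i];\IZ)|$ to Reidemeister--Franz torsion and regulators, with the three inputs (i)--(iii), is indeed the standard framework used in the literature, and it correctly reflects the discussion of regulators and torsion growth in the reference~\cite[Sections~8--10]{Lueck(2016_l2approx)} that the paper points to. So as a roadmap it is sound.

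However, there is a substantive gap: you miss the paper's central observation about this conjecture, namely the incompatibility proved in Remark~\ref{rem:Singer_and_growth}. The paper shows that the Singer Conjecture~\ref{con:Singer_Conjecture} and Conjecture~\ref{con:Homological_torsion_growth_and_L2-torsion} cannot both hold, because their conjunction (via the $M\times M\times M$ trick, the K\"unneth formula, and the Universal Coefficient Theorem) implies the $\IF_p$-Singer Conjecture of Subsection~\ref{subsec:The_F_p-Singer_Conjecture}, which is known to fail by~\cite[Theorem~4]{Avramidi-Okun-Schreve(2021)}. Your strategy assumes the Singer Conjecture (to secure $L^2$-acyclicity in input (i)) and then aims at input (iii), which is precisely the part the Avramidi--Okun--Schreve counterexamples show cannot coexist with the Singer Conjecture. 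In other words, given that the Singer Conjecture is widely believed, the conjecture you are attacking is very likely \emph{false} as stated, and the off-middle vanishing in input (iii) is exactly where it fails; this is why the paper introduces the weaker Conjecture~\ref{con:Modified_Homological_torsion_growth_and_L2-torsion}, which drops the degreewise vanishing and retains only the alternating sum. A correct account of the status of this statement has to record this incompatibility rather than treat (iii) as an input one might hope to verify uniformly.

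Two smaller points: the conjecture says nothing about even $\dim(M)$ beyond the first display (there is no ``even-dimensional'' clause to reduce to), so your final parenthetical about the even case is not part of what must be established. And the case actually known in the literature is controlled not by sparsity of free parts in homology as you suggest, but by the vanishing criteria the paper records: $\pi_1(M)$ containing an elementary amenable infinite normal subgroup, or $M$ admitting a non-trivial $S^1$-action, via~\cite[Corollary~1.13]{Lueck(2013l2approxfib)}.
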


    Conjecture~\ref{con:Homological_torsion_growth_and_L2-torsion} is true,
    if $\pi_1(M)$ contains an elementary amenable infinite normal  subgroup or $M$ carries a
    non-trivial $S^1$-action,   see~\cite[Corollary~1.13]{Lueck(2013l2approxfib)}.

    \begin{remark}[Criteria for vanishing homology torsion growth]
      Recently, there has been some progress in proving vanishing of homology growth and
      torsion homology growth for certain groups.  These results are based on the general
      vanishing result of Abert-Bergeron-Fraczyk-Gaboriau~\cite[Theorem
      10.20]{Abert-Bergeron-Fraczyk-Gaboriau(2021)} and have been applied to show
      vanishing for special linear groups and mapping class groups~\cite[Theorem A,
      D]{Abert-Bergeron-Fraczyk-Gaboriau(2021)}.  Further applications of this theorem can
      be found in~\cite[Theorem A]{Andrew-Guerch-Hughes-Kudlinska(2023)},~\cite[Theorem
      A]{Andrew-Hughes-Kudlinska(2022)}, and~\cite[Proposition 1.6]{Uschold(2022)}, where
      vanishing of homology growth and torsion homology growth of mapping tori of finitely
      generated free groups with respect to a polynomially growing automorphism and of
      right-angled Artin groups which are inner amenable is shown.  Combining the first
      result with~\cite[Theorem 5.1]{Clay(2017free)} confirms
      Conjecture~\ref{con:Homological_torsion_growth_and_L2-torsion} for mapping tori of
      aspherical closed manifolds which have finitely generated free fundamental group and
      the monodromy is polynomially growing.
    \end{remark}

    \begin{remark}[The Singer Conjecture~\ref{con:Singer_Conjecture} and
      Conjecture~\ref{con:Homological_torsion_growth_and_L2-torsion} are not compatible]%
  \label{rem:Singer_and_growth}
      The Singer Conjecture~\ref{con:Singer_Conjecture} and
      Conjecture~\ref{con:Homological_torsion_growth_and_L2-torsion} about homological
      torsion growth and $L^2$-torsion cannot both be true in general. Namely, if both are
      true, then the $\IF_p$-Singer Conjecture of
      Subsection~\ref{subsec:The_F_p-Singer_Conjecture} would be true as pointed out by
      Avramidi-Okun-Schreve before Theorem~4 appearing
      in~\cite{Avramidi-Okun-Schreve(2021)}. The argument is as follows.

      Suppose we have an aspherical closed $d$-dimensional manifold
      $M$ and both the Singer Conjecture~\ref{con:Singer_Conjecture} and
      Conjecture~\ref{con:Homological_torsion_growth_and_L2-torsion} are true.
      Consider $n$ with $2n \not= d$. We have to show
      \begin{eqnarray}
       \lim_{i \to  \infty} \frac{b_{n}(M[i];\IF_p)}{[G:G_i]} = 0
        \label{to_show_for_F_p-Singer}
      \end{eqnarray}
      for $G = \pi_1(M)$ and a finite index normal chain $\{G_i\}$.
      Because of Poincare duality it suffices to consider the case $2n > d$.
      Put $M^3= M \times M \times M$, $G^3 = G \times G \times G$ and $G_i^3 = G_i \times G_i \times G_i$.
      Then $\widetilde{M^3} = \widetilde{M} \times \widetilde{M}\times \widetilde{M}$, $G^3 = \pi_1(M^3)$,
      and $M^3[i] = M[i]^3$.
     By the Singer Conjecture~\ref{con:Singer_Conjecture} applied to $M^3$ we get
     $b_{3n}^{(2)}(\widetilde{M^3}) = 0$.
     This implies by Theorem~\ref{the:dim_approximation_over_fields}
     \[
      \lim_{i \to  \infty}
      \frac{\rk_{\IZ}(H_{3n}(M[i]^3;\IZ))}{[G^3:G_i^3]}  = \lim_{i \to  \infty} \frac{b_{3n}(M[i]^3;\IQ)}{[G^3:G_i^3]}
      = b_{3n}^{(2)}(\widetilde{M^3};\IQ) = 0.
     \]
     Since $2(3n) > 2(3n-1) > 3d$ holds, we conclude from Conjecture~\ref{con:Homological_torsion_growth_and_L2-torsion}
     \begin{eqnarray*}
       \lim_{i \to \infty} \;\frac{\ln\big(\bigl|\tors\bigl(H_{3n}(M^3[i];\IZ)\bigr)\bigr|\bigr)}{[G^3:G_i^3]} & = & 0,
       \\
       \lim_{i \to \infty} \;\frac{\ln\big(\bigl|\tors\bigl(H_{3n-1}(M^3[i];\IZ)\bigr)\bigr|\bigr)}{[G^3:G_i^3]} & = & 0.
     \end{eqnarray*}
     We have for any $k \ge 0$
     \begin{eqnarray*}
       \dim_{\IF_p}(\IZ^k \otimes_{\IZ} \IF_p) & =  & \dim_{\IZ}(\IZ^k);
        \\
       \dim_{\IF_p}(\IZ/p^k \otimes_{\IZ} \IF_p) & =  & 1;
       \\
       \dim_{\IF_p}\bigl(\Tor^{\IZ}_1(\IZ/p^k,\IF_p)\bigr) & =  & 1;
       \\
       \ln\bigl(|\tors(\IZ/p^k )|\bigr)  & =  & k \cdot \ln(p).
     \end{eqnarray*}
     If $q$ is an prime different from $p$, we get $\IZ/q^k \otimes_{\IZ} \IF_p = 0$
     and $\Tor^{\IZ}_1(\IZ/q^k,\IF_p) = 0$. Hence we get for any finitely generated abelian group $M$
     \begin{eqnarray*}
       \dim_{\IF_p}(M\otimes_{\IZ} \IF_p)
       & \le &
       \dim_{\IZ}(M) + \frac{\ln\bigl(|\tors(M)|\bigr)}{\ln(2)};
       \\
       \dim_{\IF_p}(\Tor^{\IZ}_1(M,\IF_p)\bigr)
       & \le &
       \frac{\ln\bigl(|\tors(M)|\bigr)}{\ln(2)}.
     \end{eqnarray*}
     We conclude from the K\"unneth Formula and the Universal Coefficient Theorem
     \begin{eqnarray*}
        \frac{b_n(M[i];\IF_p)^3}{[G:G_i]^3}
       & = &
       \frac{b_n(M[i];\IF_p)^3}{[G^3:G_i^3]}
       \\
       & \le &
       \frac{b_{3n}(M[i]^3;\IF_p)}{[G^3:G_i^3]}
       \\
        & = &
              \frac{\dim_{\IF_p}\bigl(H_{3n}(M[i]^3;\IZ) \otimes_{\IZ} \IF_p\bigr) +
              \dim_{\IF_p}\bigl(\Tor^{\IZ}_1(H_{3n-1}(M[i];\IZ),\IF_p)\bigr)}{[G^3:G_i^3]}
       \\
        & \le &
        \frac{\dim_{\IZ}(H_{3n}(M[i]^3;\IZ))}{[G^3:G_i^3]}
           + \frac{\ln\big(\bigl|\tors\bigl(H_{3n}(M^3[i];\IZ)\bigr)\bigr|\bigr)}{[G^3:G_i^3] \cdot \ln(2)} \\
           && \hspace{30mm} + \frac{\ln\big(\bigl|\tors\bigl(H_{3n-1}(M^3[i];\IZ)\bigr)\bigr|\bigr)}{[G^3:G_i^3] \cdot \ln(2)}.
     \end{eqnarray*}
     Hence~\eqref{to_show_for_F_p-Singer} is true.

      We have already mentioned in
      Subsection~\ref{subsec:The_F_p-Singer_Conjecture} that the $\IF_p$-Singer Conjecture
      is not true in general, see~\cite[Theorem~4]{Avramidi-Okun-Schreve(2021)}.
      Hence the Singer Conjecture~\ref{con:Singer_Conjecture} or
      Conjecture~\ref{con:Homological_torsion_growth_and_L2-torsion} is not true.
    \end{remark}

    In view of Remark~\ref{rem:Singer_and_growth} we suggest to keep the Singer Conjecture~\ref{con:Singer_Conjecture}
    and weaken  Conjecture~\ref{con:Homological_torsion_growth_and_L2-torsion} to the following version.

    \begin{conjecture}[Modified Homological torsion growth and $L^2$-torsion]%
      \label{con:Modified_Homological_torsion_growth_and_L2-torsion}
      Let $M$ be an aspherical closed manifold of odd dimension $\dim(M) = 2m+1$.

      Then $M$ is $\det$-$L^2$-acyclic,
      the limit
      $\lim_{i \to \infty} \left(\sum_{n = 0}^{2m+1} (-1)^n\cdot
        \frac{\ln(|\tors(H_n(M[i];\IZ))|)}{[G:G_i]}\right)$
      exists and is given by
      \[
        \lim_{i \to \infty} \left(\sum_{n = 0}^{2m+1} (-1)^n \cdot
          \frac{\ln\big(\bigl|\tors\bigl(H_n(M[i];\IZ)\bigr)\bigr|\bigr)}{[G:G_i]}\right) =
       \rho^{(2)}(\widetilde{M}).
      \]
    \end{conjecture}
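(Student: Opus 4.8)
The plan is to peel the statement into its two qualitatively different assertions — that $\widetilde{M}$ is $\det$-$L^2$-acyclic, and that the alternating sum of normalized logarithmic torsion orders converges to $\rho^{(2)}(\widetilde{M})$ — and to reduce the second to an approximation theorem for $L^2$-torsion together with a vanishing statement for regulator growth. For the first part: since $\dim(M) = 2m+1$ is odd, $2n \neq \dim(M)$ for every $n$, so ``$\widetilde{M}$ is $L^2$-acyclic'' is exactly the odd-dimensional case of the Singer Conjecture~\ref{con:Singer_Conjecture}, and, given $L^2$-acyclicity, determinant class follows from the Determinant Conjecture~\ref{con:Determinant_Conjecture} for $\pi_1(M)$ via Theorem~\ref{thm:main_properties_of_rho2(widetildeX)}~\eqref{list:main_properties_of_rho2(widetildeX):sofic_det-class}. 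So I would grant both conjectures (or restrict to the cases where they are known: $\dim(M)\le 3$, $M$ locally symmetric, $\pi_1(M)$ with an elementary amenable infinite normal subgroup, or $M$ with a non-trivial $S^1$-action) and concentrate on the torsion-growth formula.

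Next I would reduce to combinatorial torsion at finite level. Write $C_* = C_*(\widetilde{M})$ for the cellular $\IZ\pi$-chain complex, so $C_*(M[i]) = \IZ[G/G_i]\otimes_{\IZ G}C_*$. For the finite based free $\IZ$-complex $C_*(M[i])$, the combinatorial torsion $\tau(M[i])\in\IR^{>0}$ — computed with the trivial real representation, the inner product making the cells orthonormal, and harmonic representatives of real homology — satisfies a classical identity of the form $\ln\tau(M[i]) = \pm\sum_n(-1)^n\ln\bigl|\tors(H_n(M[i];\IZ))\bigr| - \sum_n(-1)^n R_n(M[i])$, where $R_n(M[i])$ is the logarithm of the covolume of the lattice $H_n(M[i];\IZ)_{\mathrm{free}}\subset H_n(M[i];\IR)$ in the harmonic metric; by the Cheeger--M\"uller theorem this $\tau(M[i])$ is also the Ray--Singer analytic torsion of $M[i]$. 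It therefore suffices to establish (i) that $\lim_{i\to\infty}\ln\tau(M[i])/[G:G_i] = \rho^{(2)}(\widetilde{M})$, and (ii) that $\lim_{i\to\infty}[G:G_i]^{-1}\sum_n(-1)^n R_n(M[i]) = 0$.

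Statement (i) is the approximation conjecture for $L^2$-torsion, which I would attack with the spectral-measure method used for $L^2$-Betti numbers: express $\ln\tau(M[i])$ as an alternating combination of the integrals $\int_{0^+}^{\infty}\ln\lambda\,dF_n^{[i]}(\lambda)$, where $F_n^{[i]}$ is the spectral density function of the $n$-th combinatorial Laplacian on $\IC[G/G_i]\otimes C_*$, and compare term-by-term with the $\caln(G)$-spectral density functions of the Laplacians on $\caln(G)\otimes C_*$, whose alternating combination of integrals is $\pm\rho^{(2)}(\widetilde{M})$ by the definition of $\rho^{(2)}$ and $L^2$-acyclicity. The contribution of $[\varepsilon,\infty)$ converges after normalization by $[G:G_i]$, by weak convergence of the spectral measures and the uniform bound on the operator norms of the Laplacians. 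The obstruction is the mass near $0$: one must show $\lim_{\varepsilon\to 0}\limsup_i [G:G_i]^{-1}\bigl|\int_{0^+}^{\varepsilon}\ln\lambda\,dF_n^{[i]}\bigr| = 0$. The Determinant Conjecture controls this quantity at the limit (it gives ${\det}_{\caln(G)}(c_n)\ge 1$), but upgrading it to an estimate uniform in the tower $\{G_i\}$ — morally a Novikov--Shubin / small-eigenvalue positivity that persists under passage to finite covers — requires genuinely new input, and this is the main obstacle I foresee.

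For (ii), the Singer Conjecture together with the Approximation Theorem~\ref{the:approx_Betti_char_zero} gives $b_n(M[i];\IQ) = o([G:G_i])$, so the free ranks of $H_n(M[i];\IZ)$ grow sublinearly; but bounding the $R_n(M[i])$ themselves, both above and below, still needs effective control of the geometry of the covers $M[i]$ (injectivity radius, shortest geodesics, lengths of harmonic generators of homology, in the spirit of Bergeron--Venkatesh), equivalently control of the small part of the analytic spectrum through the Cheeger--M\"uller comparison already invoked. Carrying this out for arbitrary aspherical $M$, without the arithmetic structure available in the locally symmetric case, is exactly what makes the individual-degree statements of Conjecture~\ref{con:Homological_torsion_growth_and_L2-torsion} fail; the alternating sum in the present modified conjecture is arranged so that the unstable individual regulator contributions cancel, and proving that cancellation unconditionally — together with the uniform determinant estimate of the previous step — is what the whole problem comes down to.
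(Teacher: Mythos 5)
The statement you were asked to prove is a \emph{Conjecture}, not a theorem: the paper offers no proof of it, and Remark~\ref{rem:motivation_Modifiied_Conjecture} says explicitly that beyond the trivial case $\pi_1 \cong \IZ$ essentially nothing is known. So there is no argument in the paper for your proposal to be measured against. What there is, and what you in effect reconstruct, is the paper's \emph{motivation}: Remark~\ref{rem:Singer_and_growth} shows that the original Conjecture~\ref{con:Homological_torsion_growth_and_L2-torsion} and the Singer Conjecture~\ref{con:Singer_Conjecture} cannot both hold, and the alternating sum in the modified statement is chosen precisely so that this contradiction evaporates, because one no longer asserts vanishing of the per-degree torsion growth. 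Your observation that individual-degree estimates are exactly what fail, and that one must instead rely on cancellation in the alternating sum, is the correct reading of why the conjecture has this form.

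Your reduction — det-$L^2$-acyclicity from Singer plus the Determinant Conjecture~\ref{con:Determinant_Conjecture}; Cheeger--M\"uller to pass between analytic and Reidemeister torsion at finite level; the decomposition of $\ln\tau(M[i])$ into a torsion-order part and a regulator part; reducing the claim to approximation of $L^2$-torsion plus vanishing of the normalized alternating regulator sum — is the standard framework, and it is essentially the content of Sections~8--10 of~\cite{Lueck(2016_l2approx)}, which the paper cites for exactly this relationship but does not reproduce. You also correctly isolate the two unresolved obstacles: a uniform-in-$i$ control of the small-spectrum contribution to the Fuglede--Kadison determinant along the tower, which is what the (poorly understood) Approximation Conjecture for Fuglede--Kadison determinants would supply, and sublinear regulator growth, which currently requires arithmetic or hyperbolic geometric input \`a la Bergeron--Venkatesh that is unavailable for general aspherical $M$. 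In other words: you have reproduced the known heuristic and correctly diagnosed where it stalls. You should not present this as a proof, and the paper does not claim to have one; it presents the statement as an open problem intended to replace the older Conjecture~\ref{con:Homological_torsion_growth_and_L2-torsion}, which is now known to be incompatible with Singer.
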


    \begin{remark}[Discussion of Conjecture~\ref{con:Modified_Homological_torsion_growth_and_L2-torsion}]%
  \label{rem:motivation_Modifiied_Conjecture}
      It is conceivable that both the Singer Conjecture~\ref{con:Singer_Conjecture} and the
      modified
      Conjecture~\ref{con:Modified_Homological_torsion_growth_and_L2-torsion}
      about homological torsion growth and $L^2$-torsion are true,
      since the argument in Remark~\ref{rem:Singer_and_growth} that the Singer
      Conjecture~\ref{con:Singer_Conjecture} or
      Conjecture~\ref{con:Homological_torsion_growth_and_L2-torsion} is not true does not
      apply anymore.

      The difference between Conjecture~\ref{con:Homological_torsion_growth_and_L2-torsion}
      and Conjecture~\ref{con:Modified_Homological_torsion_growth_and_L2-torsion} is that we
      drop the claim $\lim_{i \to \infty} \;\frac{\ln(|\tors(H_n(M[i];\IZ))|)}{[G:G_i]} = 0$
      in Conjecture~\ref{con:Homological_torsion_growth_and_L2-torsion} and allow in
      Conjecture~\ref{con:Modified_Homological_torsion_growth_and_L2-torsion} contributions
      from all $n \ge 0$ and not only from $n = m$ as in
      Conjecture~\ref{con:Homological_torsion_growth_and_L2-torsion}.  We conclude
      $\tors\bigl(H_n(M[i];\IZ)\bigr) \cong \tors\bigl(H_{2m-n}(M[i];\IZ)\bigr)$ for
      $n \ge 0$ from Poincar\'e duality and the universal coefficient theorem and the vanishing of
      $\tors\bigl(H_0(M[i];\IZ)\bigr)$.  We get the equality
      \begin{multline*}
      \sum_{n = 0}^{2m+1} (-1)^n\cdot
      \frac{\ln\big(\bigl|\tors\bigl(H_n(M[i];\IZ)\bigr)\bigr|\bigr)}{[G:G_i]}
      \\
      =
      (-1)^m \cdot \frac{\ln\big(\bigl|\tors\bigl(H_m(M[i];\IZ)\bigr)\bigr|\bigr)}{[G:G_i]} +
      2 \cdot \sum_{n = 1}^{m-1} (-1)^n\cdot
      \frac{\ln\big(\bigl|\tors\bigl(H_n(M[i];\IZ)\bigr)\bigr|\bigr)}{[G:G_i]}.
    \end{multline*}
    Hence in dimension $\dim(M)  = 3$ Conjecture~\ref{con:Homological_torsion_growth_and_L2-torsion}
      and Conjecture~\ref{con:Modified_Homological_torsion_growth_and_L2-torsion} agree.
      This does not lead to any contradiction since no $3$-dimensional counterexample to the
      $\IF_p$-Singer Conjecture known.

      Note that Conjecture~\ref{con:Modified_Homological_torsion_growth_and_L2-torsion} does
      not imply the Hopf Conjecture for
      $L^2$-torsion~\ref{con:Hopf_Conjecture_for_L2-torsion} for aspherical closed
      topological manifolds. One may wonder whether the Hopf Conjecture for
      $L^2$-torsion~\ref{con:Hopf_Conjecture_for_L2-torsion} is not true in odd
      dimensions $\dim(M) \ge 5$  in general.
      It is known to be true in dimension $\dim(M) = 3$
      by~\cite[Theorem~0.7]{Lueck-Schick(1999)}, yet again since Thurston's Geometrization
      Conjecture is known to be true.
      The modified Conjecture~\ref{con:Modified_Homological_torsion_growth_and_L2-torsion} resolves the problem mentioned above, but may only be true in special cases.

      One may replace in
      Conjecture~\ref{con:Modified_Homological_torsion_growth_and_L2-torsion} the aspherical
      closed manifold $M$ of odd dimension $\dim(M) = 2m+1$ by a connected finite
      $CW$-complex $X$, for which $\widetilde{X}$ is $\det$-$L^2$-acyclic.  This
      corresponds to~\cite[Conjecture~8.9 on page~290]{Lueck(2016_l2approx)}. This version is true if
      $\pi_1(X) \cong \IZ$ holds, see~\cite[Theorem~7.3]{Bergeron-Venkatesh(2013)}, but at the time of writing
      nothing is known in general if $\pi_1(X)$ is infinite and not equal to $\IZ$.

    \end{remark}

    For applications of these approximation conjectures for $L^2$-torsion to questions about profinite rigidity
    for fundamental groups of $3$-manifolds and lattices in higher rank Lie  groups
    we refer to~\cite[Section~6.7]{Kammeyer(2019)} and~\cite{Kammeyer-Kionke(2023)}.
    Estimates for the homological growth in terms of the volume for aspherical manifolds are established
    in~\cite{Sauer(2016)}.

  %----------------------------------------------------------

  \subsection{Further examples}\label{subsec:further_examples}

  Other invariants, such as the rank gradient and the cost, truncated Euler characteristics,
  and minimal numbers of generators, are discussed in~\cite[Sections~4 and~5]{Lueck(2016_l2approx)}.
  For consideration concerning the speed of convergence we refer to~\cite[Section~6]{Lueck(2016_l2approx)}.

  %%%%%%%%%%%%%%%%%%%%%%%%%%%%%%%%%%%%%%%%%%%%%%%%%%%%%%%%%%%%%%%%%%%%%%%%%%%%%%%%%
  %%%%%%%%%%%%%%%%%%%%  Section 7: Approximation Results for arbitrary normal chains %%%%%%%%%%%%%%%%%%
  %%%%%%%%%%%%%%%%%%%%%%%%%%%%%%%%%%%%%%%%%%%%%%%%%%%%%%%%%%%%%%%%%%%%%%%%%%%%%%%%%

  \typeout{------------------------ Section 7: Approximation  for arbitrary normal chains  ------------------}

  \section{Approximation for normal chains}%
  \label{sec:Approximation_for__normal_chains}

  A \emph{normal chain} $\{G_i\}$ for the group $G$ is a descending chain of subgroups
  \begin{eqnarray}
    & G = G_0 \supseteq G_1 \supseteq G_2 \supseteq \cdots &
                                                             \label{normal_chain}
  \end{eqnarray}
  such that $G_i$ is normal in $G$ and $\bigcap_{i \ge 0} G_i = \{1\}$. Note that a normal
  chain is a finite index normal chain, if and only if  $[G:G_i]$ is finite
  for each $i$. Next we want to discuss approximation results for normal chains.

  %----------------------------------------------------------------------------

  \subsection{The Approximation Conjecture  for $L^2$-Betti numbers}%
  \label{subsec:The_Approximation_Conjecture_for_L2-Betti_numbers}

  Next we deal with the Approximation Conjecture for $L^2$-Betti numbers
  (see~\cite[Conjecture~1.10]{Schick(2001b)},~\cite[Conjecture~13.1 on page~453]{Lueck(2002)}).

  \begin{conjecture}[Approximation Conjecture for $L^2$-Betti numbers]%
  \label{con:Approximation_conjecture_for_L2-Betti_numbers}
    A group $G$   satisfies the \emph{Approximation
      Conjecture for $L^2$-Betti numbers} if for any  normal chain $\{G_i\}$ one of the following equivalent
    conditions holds

    \begin{enumerate}

    \item Matrix version\\[1mm]
      Let $A \in M_{r,s}(\IQ G)$ be a matrix. Then
      \begin{eqnarray*}
        \lefteqn{\dim_{\caln(G)}\bigl(\ker\bigl(r_A^{(2)}\colon L^2(G)^r \to L^2(G)^s
          \bigr)\bigr)}
        & &
        \\ & \hspace{14mm} =  &
        \lim_{i \to \infty} \;\dim_{\caln(G/G_i)}\big(\ker
        \big(r_{A[i]}^{(2)}\colon L^2(G/G_i)^r \to L^2(G/G_i)^s \bigr)\bigr);
      \end{eqnarray*}

    \item $CW$-complex version\\[1mm]
      Let $X$ be a $G$-$CW$-complex of finite type. Then $X[i] := G_i\backslash X$ is a
      $G/G_i$-$CW$-complex of finite type and we get for $n \ge 0$
      \begin{eqnarray*} b_n^{(2)}(X;\caln(G)) & = & \lim_{i \to \infty}
        \;b_n^{(2)}(X[i];\caln(G/G_i)).
      \end{eqnarray*}

    \end{enumerate}
  \end{conjecture}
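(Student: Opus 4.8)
The content of the final statement is the asserted equivalence of the matrix version~(1) and the $CW$-complex version~(2); since these are reformulations of one another, the plan is essentially bookkeeping with the dimension function of $\caln(G)$ and its analogues $\dim_{\caln(G/G_i)}$, applied uniformly to $G$ and to all the quotients $G/G_i$. Throughout I would work with free $G$-$CW$-complexes of finite type, which is the case of interest; for a finite-type chain complex $C_*$ of finitely generated free $\IZ H$-modules I abbreviate $b_n^{(2)}(C_*;\caln(H)) := \dim_{\caln(H)}\bigl(H_n(\caln(H)\otimes_{\IZ H} C_*)\bigr)$, consistent with the definition of $L^2$-Betti numbers in Section~\ref{sec:Basics_about_L2-invariants}.

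The key preliminary computation is the following. Let $H$ be a group and let $C_*$ be a chain complex of finitely generated free $\IZ H$-modules of finite type with $c_n$ represented by a matrix $A_n$ over $\IZ H$ and $C_n$ of rank $r_n$. Writing $c_n^{(2)}$ for the induced operator $L^2(H)^{r_n}\to L^2(H)^{r_{n-1}}$, additivity of $\dim_{\caln(H)}$ together with the weak-exactness identity $\dim_{\caln(H)}\bigl(L^2(H)^{r_{n+1}}\bigr) = \dim_{\caln(H)}\ker\bigl(c_{n+1}^{(2)}\bigr) + \dim_{\caln(H)}\overline{\im\bigl(c_{n+1}^{(2)}\bigr)}$, the equality $\dim_{\caln(H)}\bigl(L^2(H)\bigr) = 1$, and the fact that $\dim_{\caln(H)}\bigl(H_n(\caln(H)\otimes_{\IZ H} C_*)\bigr)$ equals the $\caln(H)$-dimension of the reduced $L^2$-homology $\ker(c_n^{(2)})/\overline{\im(c_{n+1}^{(2)})}$, yield
\[
b_n^{(2)}\bigl(C_*;\caln(H)\bigr) = \dim_{\caln(H)}\ker\bigl(r_{A_n}^{(2)}\bigr) + \dim_{\caln(H)}\ker\bigl(r_{A_{n+1}}^{(2)}\bigr) - r_{n+1}.
\]
The crucial point is that $r_{n+1}$ does not depend on $H$. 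For $(1)\Rightarrow(2)$, let $X$ be a free $G$-$CW$-complex of finite type, so $C_*(X)$ is such a complex over $\IZ G$ and $C_*(X[i]) = \IZ[G/G_i]\otimes_{\IZ G} C_*(X)$ has the same ranks, with $c_n$ represented by the image $A_n[i]$ of $A_n$ under $\IZ G\to\IZ[G/G_i]$, whence $\ker\bigl(c_n[i]^{(2)}\bigr)=\ker\bigl(r_{A_n[i]}^{(2)}\bigr)$. Applying the displayed formula over $\caln(G)$ and over each $\caln(G/G_i)$ and feeding in the matrix version~(1) separately for the two matrices $A_n$ and $A_{n+1}$ gives $\lim_{i\to\infty} b_n^{(2)}(X[i];\caln(G/G_i)) = b_n^{(2)}(X;\caln(G))$, the limit existing because its two constituent limits do.

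For $(2)\Rightarrow(1)$, given $A\in M_{r,s}(\IQ G)$ I first clear denominators and rescale, which changes neither $\ker(r_A^{(2)})$ nor the $\ker(r_{A[i]}^{(2)})$, to assume $A\in M_{r,s}(\IZ G)$. Form the two-term complex $D_* = [\,\IZ G^r \xrightarrow{r_A} \IZ G^s\,]$ concentrated in degrees $1$ and $0$; the formula above gives $b_1^{(2)}(D_*;\caln(G)) = \dim_{\caln(G)}\ker(r_A^{(2)})$ and likewise $b_1^{(2)}(D_*[i];\caln(G/G_i)) = \dim_{\caln(G/G_i)}\ker(r_{A[i]}^{(2)})$. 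Since $b_1^{(2)}$ is a chain homotopy invariant and $\IZ[G/G_i]\otimes_{\IZ G}-$ preserves chain homotopy equivalences, I may replace $D_*$ by a chain-homotopy-equivalent complex realized as the cellular $\IZ G$-chain complex of a free $G$-$CW$-complex of finite type (a standard realization result for bounded-below finite-type free $\IZ G$-chain complexes, obtained by attaching equivariant cells); condition~(2) for that complex, in degree $1$, is then exactly the matrix version for $A$.

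I expect the only genuine subtleties to be the two classical ingredients invoked above: the identification of $b_n^{(2)}$, defined via $\caln(H)\otimes_{\IZ H}-$ and $\dim_{\caln(H)}$, with the reduced $L^2$-homology dimension (equivalently, the agreement of algebraic and Hilbert-space kernel dimensions), and the realization of a finite-type free chain complex, up to chain homotopy, by a free $G$-$CW$-complex of finite type. Modulo these, the equivalence is a formal consequence of additivity and weak exactness of the dimension function as recorded in Section~\ref{sec:Basics_about_L2-invariants}, used for $G$ and for all the quotients $G/G_i$ at once. If one insisted on allowing non-free $X$ in~(2), one would additionally have to make sense of $X[i]$ as a $G/G_i$-$CW$-complex and reduce to the free situation, but this is not required for the equivalence as it is used here.
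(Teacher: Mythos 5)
The paper does not prove the equivalence of the two conditions; it simply cites \cite[Lemma 13.4 on page~455]{Lueck(2002)}, so there is no in-paper argument to compare against. On its own merits, your outline has one correct half and one genuine gap.

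Your $(1)\Rightarrow(2)$ direction is fine: for a finite-type chain complex of finitely generated free $\IZ H$-modules the identity
\[
b_n^{(2)}(C_*;\caln(H)) = \dim_{\caln(H)}\ker\bigl(c_n^{(2)}\bigr) + \dim_{\caln(H)}\ker\bigl(c_{n+1}^{(2)}\bigr) - r_{n+1}
\]
does follow from additivity and weak exactness together with $\dim_{\caln(H)}(L^2(H))=1$, the constant $r_{n+1}$ is independent of $H$, and since $C_*(X[i]) = \IZ[G/G_i]\otimes_{\IZ G}C_*(X)$ has the same ranks with matrices $A_n[i]$, applying the matrix version to $A_n$ and $A_{n+1}$ gives the $CW$ version for free $X$. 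This is the expected argument.

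The gap is in $(2)\Rightarrow(1)$, in the sentence where you invoke ``a standard realization result for bounded-below finite-type free $\IZ G$-chain complexes, obtained by attaching equivariant cells.'' No such result holds in that generality, and your complex $D_* = [\IZ G^r \xrightarrow{r_A}\IZ G^s]$ placed in degrees $1,0$ is a concrete obstruction: $H_0(D_*)=\coker(r_A)$ is a chain homotopy invariant, and $H_0$ of (the cellular chain complex of) any free $G$-$CW$-complex is a permutation module $\IZ[\pi_0]$, hence $\IZ$-free. Already for $G$ trivial and $A=(2)$ one would need $H_0\cong\IZ/2$, which is impossible for a $CW$-complex. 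So $D_*$ in degrees $1,0$ simply cannot be realized, even up to chain homotopy, and your appeal to chain-homotopy invariance of $b_1^{(2)}$ does not rescue the step. A correct realization must first shift $D_*$ to high degrees, say $N+1,N$ with $N\geq 2$, and then supply a finite-type free $\IZ G$-resolution below degree $N$ onto which the two new cells of orbits can be attached via attaching maps whose cellular boundaries produce the given matrix; making this precise requires control over $\pi_{N}$ of the $N$-skeleton (via Hurewicz in the universal cover) and hence over the 1-skeleton, which in turn asks that the group elements appearing in $A$ be absorbed into a finitely generated (indeed finitely presented) piece that one can build. This is not a one-line ``standard result''; it is exactly the nontrivial content of the cited Lemma~13.4 in \cite{Lueck(2002)}. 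One also has to check that the induced identifications persist after passing to $\IZ[G/G_i]\otimes_{\IZ G}-$, which you do note, but that check only matters once a correct realization is in place.

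In short: the bookkeeping half of your argument is sound, but the realization step — the only genuinely nontrivial ingredient of $(2)\Rightarrow(1)$ — is asserted rather than proved, and as stated it is false; it needs the degree-shift-and-fill construction sketched above or a reduction to a finitely generated subgroup carrying the entries of $A$, with a compatibility argument for the dimension functions under induction.
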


  The two conditions appearing in
  Conjecture~\ref{con:Approximation_conjecture_for_L2-Betti_numbers} are
  equivalent by~\cite[Lemma 13.4 on page~455]{Lueck(2002)}.

  \begin{remark}[The Determinant Conjecture  and the Approximation Conjecture]%
 \label{rem:The_Determinant_Conjecture_implies_the_Approximation_Conjecture_for_L2-Betti_numbers}
    Let $G$ be a group and $\{G_i\}$ be a normal chain. Suppose that $G$ and each $G/G_i$
    satisfies the Determinant Conjecture~\ref{con:Determinant_Conjecture}.  Then the
    Approximation Conjecture~\ref{con:Approximation_conjecture_for_L2-Betti_numbers} for
    $L^2$-Betti numbers holds for $G$ for this normal chain $\{G_i\}$
    by~\cite[Theorem~13.3 (1) on page~454]{Lueck(2002)}. We mention
    that in~\cite[Theorem~13.3 (1) on page~454]{Lueck(2002)} there is a misprint, $G_i$ has
    to be replaced by $G/G_i$. Moreover, as pointed out to us by
    Bin Sun,~\cite[Theorem~13.3 (2) on page~454]{Lueck(2002)} is not correct as stated.
    It is true that $G$ is of $\det \ge 1$-class if the group $G$ belongs to $\calg$. But it is not
    correct that $G$ satisfies the Approximation Conjecture for any normal chain $\{G_i\}$
    if $G$ belongs to $\calg$, one additionally needs that each $G/G_i$ belongs to $\calg$.
  \end{remark}

  Recall that the Determinant Conjecture~\ref{con:Determinant_Conjecture} is known for a large class of groups,
  for instance it is true for all sofic groups.
  Suppose that each quotient $G/G_i$ is finite. Then we recover
  Theorem~\ref{the:approx_Betti_char_zero} from
  Remark~\ref{rem:The_Determinant_Conjecture_implies_the_Approximation_Conjecture_for_L2-Betti_numbers}.

  A typical application of the Approximation
  Conjecture~\ref{con:Approximation_conjecture_for_L2-Betti_numbers} is the following. If
  $G$ satisfies the Approximation
  Conjecture~\ref{con:Approximation_conjecture_for_L2-Betti_numbers} and
  for a given normal chain $\{G_i\}$ each $G/G_i$ is torsionfree and satisfies the Atiyah
  Conjecture~\ref{con:Atiyah_Conjecture}, then $G$ is torsionfree and satisfies the Atiyah
  Conjecture~\ref{con:Atiyah_Conjecture}, since a limit of a convergent sequence of integers
  is an integer again.

  For more information about the Approximation Conjecture and its applications we refer
  to~\cite[Chapter~13]{Lueck(2002)} and~\cite{Schick(2001b)}.

  %----------------------------------------------------------------------------

  \subsection{Approximation of Fuglede-Kadison determinants}%
  \label{subsec:Approximation_of_Fuglede-Kadison_determinants}

  The following conjecture is taken from~\cite[Conjecture~14.1 on page~308]{Lueck(2016_l2approx)}.

  \begin{conjecture}[Approximation Conjecture for Fuglede-Kadison determinants]%
  \label{con:Approximation_conjecture_for_Fuglede-Kadison_determinants_with_arbitrary_index}
  A group $G$ satisfies
  the \emph{Approximation Conjecture for Fuglede-Kadison determinants}
  if for any normal chain $\{G_i\}$ and any matrix $A \in M_{r,s}(\IQ G)$
  we get for the Fuglede-Kadison determinant
  \begin{eqnarray*}
  \lefteqn{{\det}_{\caln(G)}\bigl(r_A^{(2)}\colon L^2(G)^r \to L^2(G)^s\bigr)}
  & &
  \\ & \hspace{14mm} =  &
  \lim_{i \in I}\; {\det}_{\caln(G/G_i)}\big(r_{A[i]}^{(2)}\colon L^2(G/G_i)^r \to L^2(G/G_i)^s\bigr).
  \end{eqnarray*}
  \end{conjecture}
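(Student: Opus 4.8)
The plan is to run the spectral-measure argument that underlies Lück's approximation theorem and to isolate the one genuinely new difficulty. First reduce to a positive operator: since $r_A^{(2)}$ and $r_{AA^*}^{(2)}$ have the same Fuglede--Kadison determinant (indeed ${\det}_{\caln(G)}(r_A^{(2)})$ is defined through the spectral measure of $(r_A^{(2)})^*r_A^{(2)} = r_{AA^*}^{(2)}$) and $AA^* \in M_r(\IQ G)$, we may replace $A$ by $B := AA^*$ and assume $B = B^* \ge 0$. Let $\nu$ be the spectral measure of the positive operator $r_B^{(2)}$ on $L^2(G)^r$ and, for each $i$, let $\nu_i$ be the spectral measure of $r_{B[i]}^{(2)}$ on $L^2(G/G_i)^r$, each of total mass $r$. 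Then $\nu(\{0\}) = \dim_{\caln(G)}\ker(r_A^{(2)})$, $\nu_i(\{0\}) = \dim_{\caln(G/G_i)}\ker(r_{A[i]}^{(2)})$, and
\[
2\ln{\det}_{\caln(G)}(r_A^{(2)}) = \int_{(0,\infty)}\ln(t)\,d\nu(t),\qquad 2\ln{\det}_{\caln(G/G_i)}(r_{A[i]}^{(2)}) = \int_{(0,\infty)}\ln(t)\,d\nu_i(t),
\]
so the conjecture asserts that the second integral converges to the first (with the convention $\ln 0 = -\infty$ on the determinant side).

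The next step is to prove that $\nu_i \to \nu$ weakly. The $k$-th moment $\int t^k\,d\nu_i(t) = \tr_{\caln(G/G_i)}\bigl((r_{B[i]}^{(2)})^k\bigr)$ equals the coefficient at $1 \in G/G_i$ of the reduction of the fixed element $B^k \in M_r(\IQ G)$; since $\supp(B^k)$ is finite and $\bigcap_i G_i = \{1\}$, for $i$ large the quotient map $G \to G/G_i$ is injective on $\supp(B^k)$, so that coefficient equals the coefficient at $1 \in G$ of $B^k$, i.e.\ $\int t^k\,d\nu(t)$. Hence all moments converge; in particular the $\nu_i$ have uniformly bounded moments, form a tight family, and since $\nu$ is compactly supported (hence determined by its moments) this forces $\nu_i \to \nu$ weakly. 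Consequently $\int f\,d\nu_i \to \int f\,d\nu$ for every bounded continuous $f$ on $[0,\infty)$; applying this to continuous functions that agree with $t\mapsto\ln t$ on a fixed interval $[\epsilon,L]$ controls $\int_{(\epsilon,L]}\ln(t)\,d\nu_i(t)$ for each fixed $0<\epsilon<L$, the boundary effects at $\epsilon$ and $L$ being harmless (choose $\epsilon,L$ off the countable atom set of $\nu$) and the tail being uniformly negligible since $\int_{(L,\infty)}\ln(t)\,d\nu_i(t) \le L^{-1}\int t^2\,d\nu_i(t) = O(L^{-1})$.

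Writing $\int_{(0,\infty)}\ln t\,d\nu_i = \int_{(0,\epsilon]} + \int_{(\epsilon,L]} + \int_{(L,\infty)}$ and using that the last two pieces are now under control, the conjecture reduces to two statements: (a) $\nu_i(\{0\}) \to \nu(\{0\})$, which is exactly the matrix form of the Approximation Conjecture~\ref{con:Approximation_conjecture_for_L2-Betti_numbers} for this chain (it is needed because the continuous approximants of $\ln$ near $0$ carry a fixed weight $\ln\epsilon$ on the atom at $0$, which must be subtracted off), and is known whenever $G$ and all $G/G_i$ satisfy the Determinant Conjecture~\ref{con:Determinant_Conjecture} by Remark~\ref{rem:The_Determinant_Conjecture_implies_the_Approximation_Conjecture_for_L2-Betti_numbers}; and (b) the uniform integrability of $\ln$ near $0$, namely $\lim_{\epsilon\to 0^+}\ \sup_i\ \int_{(0,\epsilon]}\ln(1/t)\,d\nu_i(t) = 0$. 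Granting (a), one already gets for free the one-sided estimate $\limsup_i \ln{\det}_{\caln(G/G_i)}(r_{A[i]}^{(2)}) \le \ln{\det}_{\caln(G)}(r_A^{(2)})$ (bound $\ln t \le \max(\ln t,\ln\epsilon)$, pass to the limit by weak convergence, then let $\epsilon\to 0$ by monotone convergence), so (b) is precisely the remaining $\liminf\ge$ input.

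Statement (b) is the main obstacle, and I do not see a soft argument for it. The moments only register the \emph{atom} of $\nu_i$ at $0$, so they give no grip at all on spectral mass that sits just above $0$ in the approximating systems $G/G_i$, yet such mass contributes non-negligibly to $\int\ln$. What is needed is an \emph{effective}, family-uniform lower bound on the Fuglede--Kadison determinants ${\det}_{\caln(G/G_i)}(r_{A[i]}^{(2)})$ — equivalently a uniform Novikov--Shubin-type decay estimate such as $\sup_i\nu_i\bigl((0,\epsilon]\bigr) = o\bigl(1/\ln(1/\epsilon)\bigr)$ — which goes strictly beyond the bare inequality ${\det}\ge 1$ of the Determinant Conjecture~\ref{con:Determinant_Conjecture}. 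The realistic plan is therefore to establish (b) class by class: for amenable quotients it should follow from F{\o}lner/Ornstein--Weiss arguments; for sofic groups, or residually finite groups with controlled spectral behaviour (and in arithmetic or $3$-manifold settings), one can feed in the quantitative determinant bounds available in the literature. In full generality this uniform control of spectral mass near $0$ is the same difficulty that keeps $L^2$-torsion approximation open, and I expect it to be the crux here as well.
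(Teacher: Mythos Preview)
This statement is a \emph{conjecture}, not a theorem: the paper does not prove it, and in fact Remark~\ref{rem:status_of_appr_FK-det} records that no infinite group beyond virtually cyclic ones is known to satisfy it. So there is no paper proof to compare against.

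That said, your analysis is accurate and matches the paper's own discussion. The reduction to a positive operator, the weak convergence of spectral measures via convergence of moments, the decomposition into near-zero, bulk, and tail pieces, and the identification of the missing ingredient as a \emph{uniform integrability condition} for $\ln t$ near $0$ --- all of this is exactly the strategy the paper points to in Remark~\ref{rem:strategy_of_proof_of_appr_FK-det} (referencing~\cite[Theorem~17.1]{Lueck(2016_l2approx)}, where condition~(v) is precisely your~(b)). Your one-sided $\limsup \le$ estimate under the Determinant Conjecture is also known; compare the discussion around~\eqref{inequality_for_Det_Appr} and~\cite[Theorem~15.7]{Lueck(2016_l2approx)}. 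Your final paragraph correctly diagnoses why~(b) is genuinely hard: moment convergence gives no control on mass in $(0,\epsilon]$, and the bare Determinant Conjecture bound ${\det}\ge 1$ is not uniform in the way needed. The paper says the same thing in Remark~\ref{rem:strategy_of_proof_of_appr_FK-det} (see~\cite[Lemma~17.2 and Remark~17.3]{Lueck(2016_l2approx)}).

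In short: you have not proved the conjecture, but you were not expected to --- nobody has. Your writeup is a correct and well-organised account of the known strategy and its obstruction.
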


  An equivalent chain complex version can be found in~\cite[Conjecture~15.3 on page~309]{Lueck(2016_l2approx)}.

  %----------------------------------------------------------------------------

  \subsection{Approximation of $L^2$-torsion}\label{subsec:Approximation_of_L2_-torsion}

  Let $\overline{M}$ be a Riemannian manifold without boundary that
  comes with a proper free cocompact isometric $G$-action.
  Denote by $M[i]$ the Riemannian manifold obtained
  from $\overline{M}$ by dividing out the $G_i$-action. The Riemannian metric on
  $M[i]$ is induced by the one on $M$. There is an obvious proper free
  cocompact isometric $G/G_i$-action on $M[i]$ induced by the given $G$-action on
  $\overline{M}$. Notice that $M = \overline{M}/G$ is a closed Riemannian manifold
  and we get a $G$-covering $\overline{M} \to M$ and a $G/G_i$-covering $M[i] \to M$
  which are compatible with the Riemannian metrics. Denote by
  \begin{eqnarray}
    \rho^{(2)}_{\an}(\overline{M};\caln(G)) & \in & \IR;
    \label{L2-torsion_for_M_over_cakln(G)}
    \\
    \rho^{(2)}_{\an}(M[i];\caln(G/G_i)) & \in & \IR,
    \label{L2-torsion_for_M[i]_over_caln(G/G_i)}
  \end{eqnarray}
  their \emph{analytic $L^2$-torsion} over $\caln(G)$ and $\caln(G/G_i)$
  respectively, see~\cite[Definition~3.128 on page~178]{Lueck(2002)}. Note  that we do \emph{not} require
  that $\overline{M}$ or $M[i]$ is $L^2$-acyclic. If $[G:G_i]$ is finite,
  $\rho^{(2)}_{\an}(M[i];\caln(G/G_i))$ is the Ray-Singer torsion of the closed Riemannian manifold
  multiplied with $\frac{1}{[G:G_i]}$ defined in~\cite{Ray-Singer(1971)}.

  \begin{conjecture}[Approximation Conjecture for analytic $L^2$-torsion]%
  \label{con:Approximation_conjecture_for_analytic_L2-torsion}
  A group $G$ satisfies the \emph{Approximation Conjecture for analytic $L^2$-torsion}
  if for any normal chain $\{G_i\}$ and Riemannian manifold  $\overline{M}$ without boundary and with a proper free cocompact isometric $G$-action
  \[\rho^{(2)}_{\an}(\overline{M};\caln(G))
  = \lim_{i \in I} \;\rho^{(2)}_{\an}(M[i];\caln(G/G_i)).
  \]
  \end{conjecture}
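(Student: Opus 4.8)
The plan is to reduce this analytic statement, level by level, to the combinatorial $L^2$-torsion and from there to the Approximation Conjecture for Fuglede--Kadison determinants. First I would fix a smooth triangulation of the closed manifold $M = \overline{M}/G$ and pull it back along the coverings, so that $\overline{M}$ acquires a free cocompact $G$-$CW$-structure and each $M[i]$ a free cocompact $G/G_i$-$CW$-structure. By the $L^2$-version of the Cheeger--M\"uller theorem (the equality of analytic and combinatorial $L^2$-torsion proved by Burghelea--Friedlander--Kappeler--McDonald, together with its refinement to the non-$L^2$-acyclic case), for every such covering the difference $\rho^{(2)}_{\an} - \rho^{(2)}_{\mathrm{comb}}$ equals a correction term obtained by integrating over $M$ a local expression built from the Riemannian metric and the triangulation. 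This term is the \emph{same} for $\overline{M}$ and for all $M[i]$, hence it cancels in the limit, and it suffices to prove the approximation statement for the combinatorial $L^2$-torsion $\rho^{(2)}_{\mathrm{comb}}$.

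Next, $\rho^{(2)}_{\mathrm{comb}}(M[i])$ is an alternating sum $-\tfrac12\sum_p (-1)^p \ln {\det}_{\caln(G/G_i)}(\Delta_p[i])$ of logarithms of Fuglede--Kadison determinants of the combinatorial Laplacians $\Delta_p = c_{p+1}c_{p+1}^\ast + c_p^\ast c_p$, up to a contribution coming from a Hilbert $\caln(G/G_i)$-structure on the $L^2$-homology; the latter is governed by the $L^2$-Betti numbers $b_p^{(2)}(M[i];\caln(G/G_i))$, which approximate correctly by the Approximation Conjecture~\ref{con:Approximation_conjecture_for_L2-Betti_numbers} (available for $G$ and the $G/G_i$ whenever these groups satisfy the Determinant Conjecture~\ref{con:Determinant_Conjecture}). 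Since each $\Delta_p$ is represented by a fixed matrix $A_p \in M_{r_p}(\IZ G)$ and $\Delta_p[i]$ by its image $A_p[i] \in M_{r_p}(\IZ[G/G_i])$, the problem reduces to showing ${\det}_{\caln(G/G_i)}(r_{A_p[i]}) \to {\det}_{\caln(G)}(r_{A_p})$, i.e.\ to the Approximation Conjecture for Fuglede--Kadison determinants~\ref{con:Approximation_conjecture_for_Fuglede-Kadison_determinants_with_arbitrary_index} for the matrices $A_p$.

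To analyze the determinants, I would write $\ln {\det}_{\caln}(A_p) = \int_{0+}^{\infty} \ln(\lambda)\, dF_{A_p}$ in terms of the spectral density functions $F_{A_p}$, $F_{A_p[i]}$, and split each integral at a small $\varepsilon > 0$ and at a uniform upper bound $K$ for the operator norms (uniform because the $A_p$ have fixed entries). On $[\varepsilon, K]$ the integrand is bounded and continuous, and $dF_{A_p[i]} \to dF_{A_p}$ weakly: for each fixed $k$ the moment $\int \lambda^k\, dF_{A_p[i]} = \tr_{\caln(G/G_i)}\!\bigl((A_p[i])^k\bigr)$ is, for all $i$ large enough that $G_i$ meets the finite set of group elements occurring in $A_p^{\,k}$ only in $\{1\}$, literally equal to $\tr_{\caln(G)}\!\bigl(A_p^{\,k}\bigr) = \int \lambda^k\, dF_{A_p}$. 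Choosing $\varepsilon$ outside the countable set of atoms of $dF_{A_p}$, the $[\varepsilon, K]$-part converges; summing over $p$ with signs then gives the result, \emph{provided} the tails $\int_{0+}^{\varepsilon} \ln(\lambda)\, dF_{A_p[i]}$ are controlled.

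This last point is where I expect the real difficulty to lie. What one needs is a \emph{uniform} estimate near $0$, of the shape $\int_{0+}^{\varepsilon} |\ln \lambda|\, dF_{A_p[i]} \le \delta(\varepsilon)$ with $\delta(\varepsilon) \to 0$ as $\varepsilon \to 0$, holding simultaneously for all $i$ --- a ``uniform determinant class'' bound, equivalently uniform positivity of the Novikov--Shubin-type invariants along the tower. Combined with the weak convergence above, such a bound forces $\int_{0+}^{\varepsilon} \ln(\lambda)\, dF_{A_p[i]} \to \int_{0+}^{\varepsilon} \ln(\lambda)\, dF_{A_p}$ and completes the proof. When each quotient $G/G_i$ is finite, the bound can be extracted from the fact that the ordinary determinant of the finite-dimensional reduced Laplacian is a positive integer, as in the work of L\"uck and Schick; for infinite quotients it would require a uniform strengthening of the Determinant Conjecture~\ref{con:Determinant_Conjecture} for the family $\{G/G_i\}$, and it is precisely the absence of such uniform control in general that leaves Conjecture~\ref{con:Approximation_conjecture_for_analytic_L2-torsion} open.
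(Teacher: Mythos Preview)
This statement is a \emph{conjecture}, and the paper does not prove it; there is therefore no proof to compare your proposal against. What the paper does do in the surrounding text is exactly the reduction you outline: the analytic and topological (combinatorial) $L^2$-torsion agree by Burghelea--Friedlander--Kappeler--McDonald, so Conjecture~\ref{con:Approximation_conjecture_for_analytic_L2-torsion} is equivalent to its topological counterpart~\ref{con:Approximation_for_topological_torsion}; the latter is implied by the Approximation Conjecture~\ref{con:Approximation_conjecture_for_Fuglede-Kadison_determinants_with_arbitrary_index} for Fuglede--Kadison determinants (Theorem~\ref{the:comparing_analytic_and_chain_complexes}); and the obstruction to proving the determinant conjecture is precisely the uniform control of the spectral density near zero that you isolate (the paper calls it the ``uniform integrability condition'', see Remark~\ref{rem:strategy_of_proof_of_appr_FK-det}). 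Your final paragraph identifies the open point accurately and in the same terms as the paper.

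One place where you are more optimistic than the paper: you dismiss the non-$L^2$-acyclic contribution with ``the latter is governed by the $L^2$-Betti numbers, which approximate correctly''. The paper is explicit (Remark~\ref{rem:Fugle_and_torsion}) that Theorem~\ref{the:comparing_analytic_and_chain_complexes} is only proved under the hypothesis $b_n^{(2)}(\overline{M};\caln(G))=0$ for all $n$, and that dropping this hypothesis is itself not known --- the issue is not just that the Betti numbers converge, but that the secondary term in the torsion involves the induced Hilbert structure on the $L^2$-homology along the tower, and controlling its limit is a separate (open) problem. So your reduction ``torsion $\Rightarrow$ FK determinants'' is established in the literature only in the $L^2$-acyclic case, not in the generality you sketch.
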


  There are topological counterparts which we will denote by $\rho^{(2)}_{\topo}(X[i];\caln(G/G_i))$ and
  $\rho^{(2)}_{\topo}(\overline{X};\caln(G))$ see~\cite[Definition~3.120 on page~176]{Lueck(2002)}.
  They agree with their analytic versions
  by~\cite{Burghelea-Friedlander-Kappeler-McDonald(1996a)}.  So
  Conjecture~\ref{con:Approximation_conjecture_for_analytic_L2-torsion} is equivalent to its
  topological counterpart.

  \begin{conjecture}[Approximation Conjecture for topological torsion]\label{con:Approximation_for_topological_torsion}
    A group $G$ satisfies the \emph{Approximation Conjecture for topological $L^2$-torsion}
  if for any normal chain $\{G_i\}$ and Riemannian manifold  $\overline{M}$ without boundary and with a proper free cocompact isometric $G$-action
    \[
    \rho^{(2)}_{\topo}(\overline{M};\caln(G)) = \lim_{i \to \infty}
    \rho_{\topo}(M[i];\caln(G/G_I))
    \]
  \end{conjecture}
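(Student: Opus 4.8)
The plan is to reduce the statement to a purely combinatorial question about Fuglede--Kadison determinants of combinatorial Laplacians and then to attempt a spectral approximation argument in the spirit of L\"uck's proof in~\cite{Lueck(1994c)}. Since analytic and topological $L^2$-torsion agree over $\caln(G)$ as well as over every $\caln(G/G_i)$ by~\cite{Burghelea-Friedlander-Kappeler-McDonald(1996a)}, Conjecture~\ref{con:Approximation_for_topological_torsion} is equivalent to the analytic version, Conjecture~\ref{con:Approximation_conjecture_for_analytic_L2-torsion}, so one may work with whichever model is convenient. First I would fix a smooth triangulation of the closed manifold $M = \overline{M}/G$ and lift it to $\overline{M}$, producing a finite free $G$-$CW$-structure; passing to the cellular $\IZ G$-chain complex $C_*$ with combinatorial Laplacians $\Delta_n = c_n^{\ast}c_n + c_{n+1}c_{n+1}^{\ast} \in M_{r_n,r_n}(\IZ G)$, the torsion $\rho^{(2)}_{\topo}(\overline{M};\caln(G))$ becomes an explicit finite $\IR$-linear combination (with coefficients $\tfrac{1}{2}(-1)^n n$) of the numbers $\ln\det_{\caln(G)}(\Delta_n) = \int_{0+}^{\infty}\ln(\lambda)\,dF_{\Delta_n}$, where $F_{\Delta_n}$ is the spectral density function of $\Delta_n$, and likewise $\rho^{(2)}_{\topo}(M[i];\caln(G/G_i))$ is the corresponding combination of the $\int_{0+}^{\infty}\ln(\lambda)\,dF_{\Delta_n[i]}$ over $\caln(G/G_i)$. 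Hence the conjecture follows as soon as one proves, for each $n$,
\[
  \int_{0+}^{\infty}\ln(\lambda)\,dF_{\Delta_n[i]} \;\longrightarrow\; \int_{0+}^{\infty}\ln(\lambda)\,dF_{\Delta_n} \qquad \text{as $i \to \infty$.}
\]
This is precisely the content of the Approximation Conjecture for Fuglede--Kadison determinants~\ref{con:Approximation_conjecture_for_Fuglede-Kadison_determinants_with_arbitrary_index} specialised to the matrices $\Delta_n$, so at a minimum the problem reduces to (this case of) that conjecture.

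To prove the convergence directly I would split the integral at a small $\varepsilon > 0$. The ``bulk'' part is unconditional. The operator norms $\|\Delta_n[i]\|$ are bounded by a constant $K$ depending only on the $\IZ G$-entries of $C_*$, so all measures $dF_{\Delta_n[i]}$ are supported in $[0,K]$, and for every polynomial $p$ one has $\tr_{\caln(G/G_i)}\bigl(p(\Delta_n[i])\bigr) \to \tr_{\caln(G)}\bigl(p(\Delta_n)\bigr)$ by the usual fundamental-domain argument, using that the support of the $\IZ G$-matrix $p(\Delta_n)$ is finite and $\bigcap_i G_i = \{1\}$. Therefore $dF_{\Delta_n[i]} \to dF_{\Delta_n}$ weakly, and for a continuity point $\varepsilon$ of $F_{\Delta_n}$ the function $\ln$ is continuous and bounded on $[\varepsilon,K]$, so $\int_{\varepsilon}^{K}\ln(\lambda)\,dF_{\Delta_n[i]} \to \int_{\varepsilon}^{K}\ln(\lambda)\,dF_{\Delta_n}$. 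Assuming $G$ is of determinant class (for instance because it satisfies the Determinant Conjecture~\ref{con:Determinant_Conjecture}), the limit integral converges absolutely, so the $G$-side tail $\int_{0+}^{\varepsilon}\ln(\lambda)\,dF_{\Delta_n}$ tends to $0$ as $\varepsilon \to 0$; since moreover $\ln(\lambda) \le 0$ on $(0,1]$ gives $\int_{0+}^{\varepsilon}\ln(\lambda)\,dF_{\Delta_n[i]} \le 0$, one obtains the one-sided estimate $\limsup_{i}\int_{0+}^{\infty}\ln(\lambda)\,dF_{\Delta_n[i]} \le \int_{0+}^{\infty}\ln(\lambda)\,dF_{\Delta_n}$, and hence the ``$\limsup$-half'' of the statement about each Laplacian determinant; this is essentially what the methods of~\cite[Chapter~13]{Lueck(2002)} already deliver under the Determinant Conjecture.

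The main obstacle, as always with $L^2$-torsion approximation, is the reverse inequality: one must control the near-zero tails $\int_{0+}^{\varepsilon}\ln(\lambda)\,dF_{\Delta_n[i]}$ uniformly in $i$. The Determinant Conjecture~\ref{con:Determinant_Conjecture} for $G$ and for all $G/G_i$ only yields $\int_{0+}^{K}\ln(\lambda)\,dF_{\Delta_n[i]} \ge 0$, which does not rule out arbitrarily negative contributions concentrated near $0$, and is therefore insufficient. What one actually needs is a uniform-integrability statement: a function $\eta$ with $\eta(\varepsilon) \to 0$ as $\varepsilon \to 0$ such that $\bigl|\int_{0+}^{\varepsilon}\ln(\lambda)\,dF_{\Delta_n[i]}\bigr| \le \eta(\varepsilon)$ for all $i$. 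Such a bound follows from a uniform Novikov--Shubin type estimate $F_{\Delta_n[i]}(\lambda) - F_{\Delta_n[i]}(0) \le C\lambda^{\alpha}$ for small $\lambda$, with $C,\alpha > 0$ independent of $i$, and estimates of this kind are available in special situations --- amenable towers, elementary amenable groups, or lattices via Cheeger--Gromov --- which is exactly why the conjecture is currently a theorem only in such restricted settings. In full generality the behaviour of the approximating spectral measures near $0$ appears to be out of reach of present techniques, and I expect that a genuinely new idea here, or a proof of the Fuglede--Kadison determinant approximation conjecture~\ref{con:Approximation_conjecture_for_Fuglede-Kadison_determinants_with_arbitrary_index} itself, would be needed to settle the conjecture in general.
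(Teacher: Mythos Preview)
This statement is a \emph{conjecture}, not a theorem; the paper does not offer a proof. What the paper does provide is exactly the circle of ideas you outline: it notes (just before the conjecture) that the topological and analytic formulations are equivalent via~\cite{Burghelea-Friedlander-Kappeler-McDonald(1996a)}, records in Theorem~\ref{the:comparing_analytic_and_chain_complexes} that the Approximation Conjecture for Fuglede--Kadison determinants implies the torsion approximation under the extra hypothesis that $\overline{M}$ is $L^2$-acyclic, and in Remark~\ref{rem:strategy_of_proof_of_appr_FK-det} isolates precisely the ``uniform integrability condition'' for the small-eigenvalue tails as the outstanding obstacle. Your plan --- reduce to Laplacian determinants, prove weak convergence of spectral measures to handle the bulk, obtain the $\limsup$-inequality from the Determinant Conjecture, and identify uniform control of $\int_{0+}^{\varepsilon}\ln\lambda\,dF_{\Delta_n[i]}$ as the missing ingredient --- is therefore not a proof but a correct diagnosis of where the difficulty lies, and it matches the paper's own account.

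One point the paper flags that you pass over: even granting full Fuglede--Kadison determinant approximation, the deduction of torsion approximation is currently written down only under the assumption $b_n^{(2)}(\overline{M};\caln(G)) = 0$ for all $n$ (see Remark~\ref{rem:Fugle_and_torsion}). Your reduction via the Laplacian formula $\rho^{(2)}_{\topo} = \tfrac{1}{2}\sum_n(-1)^n n\,\ln\det_{\caln(G)}(\Delta_n)$ implicitly assumes that this identity holds on the nose over $\caln(G)$ and over each $\caln(G/G_i)$ and that no metric-dependent correction from the $L^2$-homology intervenes; in the non-acyclic case this requires care (cf.~\cite[Remark~16.2]{Lueck(2016_l2approx)}). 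This is secondary to the uniform-integrability obstruction, but it is why the paper states Theorem~\ref{the:comparing_analytic_and_chain_complexes} with the acyclicity hypothesis rather than unconditionally.
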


  The next result is proved in~\cite[Theorem~15.6 on page~310]{Lueck(2016_l2approx)}.

  \begin{theorem}\label{the:comparing_analytic_and_chain_complexes}
    Suppose that $G$ satisfies the
    Approximation Conjecture~\ref{con:Approximation_conjecture_for_Fuglede-Kadison_determinants_with_arbitrary_index}
    for Fuglede-Kadison determinants.

    Let $\overline{M}$ be a Riemannian manifold without
    boundary that comes with a proper free cocompact isometric $G$-action. Suppose
    that $b_n^{(2)}(\overline{M};\caln(G)) = 0$ holds for all $n \ge 0$.

    Then
    \[
    \rho^{(2)}_{\an}(\overline{M};\caln(G)) = \lim_{i \in I}
    \;\rho^{(2)}_{\an}(M[i];\caln(G/G_i)).
    \]
  \end{theorem}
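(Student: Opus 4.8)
The plan is to reduce the statement about analytic $L^2$-torsion to the Fuglede--Kadison determinant approximation hypothesis applied to the finitely many combinatorial Laplacian matrices coming from a $G$-equivariant triangulation of $\overline{M}$. First I would replace the analytic $L^2$-torsion by the topological (combinatorial) $L^2$-torsion on both sides: by~\cite{Burghelea-Friedlander-Kappeler-McDonald(1996a)} (cf.\ the discussion preceding Conjecture~\ref{con:Approximation_for_topological_torsion}) one has $\rho^{(2)}_{\an}(\overline{M};\caln(G)) = \rho^{(2)}_{\topo}(\overline{M};\caln(G))$ and $\rho^{(2)}_{\an}(M[i];\caln(G/G_i)) = \rho^{(2)}_{\topo}(M[i];\caln(G/G_i))$ for every $i$, so it suffices to prove the approximation statement for $\rho^{(2)}_{\topo}$. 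Fix a smooth $G$-equivariant triangulation of $\overline{M}$; this makes $C_*(\overline{M})$ a finite free $\IZ G$-chain complex with a preferred cellular basis, and $\rho^{(2)}_{\topo}$ is independent of this choice by simple homotopy invariance, Theorem~\ref{thm:main_properties_of_rho2(widetildeX)}~\eqref{list:main_properties_of_rho2(widetildeX):homotopy_invariance}. Dividing out $G_i$ turns $C_*(M[i]) = \IZ[G/G_i]\otimes_{\IZ G}C_*(\overline{M})$ into a finite free $\IZ[G/G_i]$-chain complex whose differentials $c_p[i]$, hence whose combinatorial Laplacians $\Delta_p[i] = c_{p+1}[i]\,c_{p+1}[i]^{\ast} + c_p[i]^{\ast}c_p[i]$, are exactly the images of the corresponding matrices $\Delta_p$ over $\IZ G$ under the canonical ring homomorphism $\IZ G \to \IZ[G/G_i]$.

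Next I would express the combinatorial $L^2$-torsion through these Laplacians. For the finite free based $\caln(H)$-chain complex associated to $\overline{M}$ (with $H = G$) or to $M[i]$ (with $H = G/G_i$), equipped with the cellular Hilbert structure on $L^2$-homology, one has ${\rho}^{(2)}_{\topo} = -\tfrac12\sum_{p \ge 0}(-1)^p\,p\,\ln{\det}_{\caln(H)}\bigl(r_{\Delta_p}^{(2)}\bigr)$, where the Fuglede--Kadison determinant automatically discards the kernel. On the $G$-side the complex is $L^2$-acyclic by hypothesis, so the cellular Hilbert structure on $L^2$-homology is trivial and this is genuinely $\rho^{(2)}_{\topo}(\overline{M};\caln(G))$. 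On the $M[i]$-side the manifold is in general not $L^2$-acyclic, so $\rho^{(2)}_{\topo}(M[i];\caln(G/G_i))$---which is $\tfrac1{[G:G_i]}$ times the ordinary Ray--Singer torsion of $M[i]$---differs from the analogous Laplacian expression by the torsion of the comparison between the cellular and the harmonic Hilbert structures on $H_*(M[i];\IR)$. This correction is bounded by $\tfrac{1}{[G:G_i]}\cdot C\cdot\sum_p b_p(M[i])$ for a constant $C$ depending only on $M$ and the triangulation (uniform in $i$, since all the coverings $M[i] \to M$ have the same bounded local geometry), and the hypothesis $b_p^{(2)}(\overline{M};\caln(G)) = 0$ implies---via Conjecture~\ref{con:Approximation_conjecture_for_L2-Betti_numbers}, the Fuglede--Kadison determinant version in Conjecture~\ref{con:Approximation_conjecture_for_Fuglede-Kadison_determinants_with_arbitrary_index} being the strongest in this hierarchy---that $\lim_{i}b_p(M[i])/[G:G_i] = 0$ for all $p$, so this correction tends to $0$.

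Finally I would invoke the hypothesis directly. Each $\Delta_p$ is a matrix over $\IZ G \subseteq \IQ G$, so Conjecture~\ref{con:Approximation_conjecture_for_Fuglede-Kadison_determinants_with_arbitrary_index} yields ${\det}_{\caln(G)}\bigl(r_{\Delta_p}^{(2)}\bigr) = \lim_{i}{\det}_{\caln(G/G_i)}\bigl(r_{\Delta_p[i]}^{(2)}\bigr)$ for each of the finitely many relevant $p$. Taking logarithms, forming the alternating weighted sum over $p$, and adding the correction term controlled in the previous step, one obtains $\rho^{(2)}_{\topo}(\overline{M};\caln(G)) = \lim_{i}\rho^{(2)}_{\topo}(M[i];\caln(G/G_i))$, which by the first step is the desired equality $\rho^{(2)}_{\an}(\overline{M};\caln(G)) = \lim_{i\in I}\rho^{(2)}_{\an}(M[i];\caln(G/G_i))$.

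The main obstacle is the middle step: because the covers $M[i]$ need not be $L^2$-acyclic, one must show that the discrepancy between the analytically natural (harmonic) and the combinatorially natural (cellular) inner products on the ordinary real homology of $M[i]$ is negligible after dividing by $[G:G_i]$. This needs a uniform geometric bound on that discrepancy together with the sublinear growth $b_p(M[i]) = o([G:G_i])$; the latter is exactly where the $L^2$-acyclicity hypothesis on $\overline{M}$ enters, through the fact that approximation of Fuglede--Kadison determinants forces approximation of $L^2$-Betti numbers.
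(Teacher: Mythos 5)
Your overall route --- replace $\rho^{(2)}_{\an}$ by the combinatorial $L^2$-torsion via Burghelea--Friedlander--Kappeler--McDonald, fix a smooth $G$-triangulation of $\overline{M}$, express the combinatorial torsion through the Laplacians $\Delta_p$, apply Conjecture~\ref{con:Approximation_conjecture_for_Fuglede-Kadison_determinants_with_arbitrary_index} to each $\Delta_p\in M_{r_p,r_p}(\IZ G)$, and control the anomaly caused by the non-$L^2$-acyclicity of $M[i]$ --- is the right outline and is structurally aligned with the proof the paper cites from~\cite[Theorem~15.6]{Lueck(2016_l2approx)}. But the proposal as written leaves the genuinely hard steps unproved, and in a couple of places it silently specialises the hypotheses.

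First, the chain $\{G_i\}$ in Conjecture~\ref{con:Approximation_conjecture_for_Fuglede-Kadison_determinants_with_arbitrary_index} is an arbitrary \emph{normal} chain, so $G/G_i$ may be infinite and $M[i]$ non-compact. Then there is no ``ordinary Ray--Singer torsion of $M[i]$,'' no finite Betti number $b_p(M[i])$, and no normalising factor $[G:G_i]$; the only invariants available are $\rho^{(2)}_{\an/\topo}(M[i];\caln(G/G_i))$ and $b_p^{(2)}(M[i];\caln(G/G_i))$. Your correction-term estimate has to be reformulated intrinsically in those terms before the argument even makes sense in the generality of the theorem.

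Second, and more importantly, the bound on the discrepancy between the cellular and the harmonic Hilbert structures on the $L^2$-homology of $M[i]$ by a quantity proportional to $b_p^{(2)}(M[i];\caln(G/G_i))$ with a constant uniform in $i$ is precisely where all the work in the cited proof lives. It requires uniform elliptic/Sobolev estimates for the de Rham--Whitney comparison maps on the family of covers $M[i]$, together with the non-trivial fact that the Fuglede--Kadison determinant of the comparison isomorphism is pinched away from $0$ and $\infty$ by constants depending only on the local geometry. You correctly identify this as ``the main obstacle,'' but the proposal does not resolve it. As the paper notes in Remark~\ref{rem:Fugle_and_torsion}, the present argument only works under the $L^2$-acyclicity hypothesis on $\overline{M}$, which enters exactly at this point, so flagging the step is appropriate, but a proof still has to carry it out.

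Third, the claim that the Fuglede--Kadison determinant approximation hypothesis ``forces approximation of $L^2$-Betti numbers,'' hence $b_p^{(2)}(M[i];\caln(G/G_i))\to 0$, is asserted without justification and is not automatic. One can try to deduce it by applying the hypothesis to $A^*A+tI$ for $t\in\IQ_{>0}$ and extracting the jump of the spectral density function at $0$ from the small-$t$ asymptotics of $\ln\det$, but this requires control that is uniform enough to interchange $t\to 0^+$ with $i\to\infty$, which is not given by the pointwise (in $t$) convergence the conjecture provides. The inequality recorded after~\eqref{inequality_for_Det_Appr} goes only one way and also needs the Determinant Conjecture for $G/G_i$. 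This implication must either be proved or circumvented.

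In summary, you have the right skeleton and you correctly locate the two places where the $L^2$-acyclicity of $\overline{M}$ is used (well-definedness of $\rho^{(2)}_{\topo}(\overline{M})$ independent of the triangulation, and decay of $b_p^{(2)}(M[i];\caln(G/G_i))$), but the proposal is a sketch: the uniform anomaly estimate and the passage from determinant approximation to $L^2$-Betti number approximation are left open, and the reduction to the finite-index case is not available under the stated hypotheses.
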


  \begin{remark}[Relating the Approximation Conjectures for Fuglede-Kadison determinant and
    torsion invariants]\label{rem:Fugle_and_torsion}
  It is conceivable that Theorem~\ref{the:comparing_analytic_and_chain_complexes} is still true, if we drop the assumption that $b_n^{(2)}(\overline{M};\caln(G))$ vanishes for all $n \ge 0$,
  but our present proof works only under this assumption,
  see~\cite[Remark~16.2 on page~315]{Lueck(2016_l2approx)}. If we can drop this assumption
  in Theorem~\ref{the:comparing_analytic_and_chain_complexes}, then
  Theorem~\ref{the:comparing_analytic_and_chain_complexes} just boils down to the statement  that
  $G$ satisfies the equivalent Conjectures~\ref{con:Approximation_conjecture_for_analytic_L2-torsion}
  and~\ref{con:Approximation_for_topological_torsion} about $L^2$-torsion, provided that $G$ satisfies
  the Approximation Conjecture~\ref{con:Approximation_conjecture_for_Fuglede-Kadison_determinants_with_arbitrary_index}
  for Fuglede-Kadison determinants.
  \end{remark}

  \begin{remark}[Strategy of proof of
    Conjecture~\ref{con:Approximation_conjecture_for_Fuglede-Kadison_determinants_with_arbitrary_index}]%
    \label{rem:strategy_of_proof_of_appr_FK-det}
    A strategy to prove the Approximation
    Conjecture~\ref{con:Approximation_conjecture_for_Fuglede-Kadison_determinants_with_arbitrary_index}
    for Fuglede-Kadison determinants is described
    in~\cite[Theorem~17.1 on page~316]{Lueck(2016_l2approx)}.  The key problem is isolated in the
    \emph{uniform integrability condition} appearing
    in~\cite[Theorem~17.1~(v)]{Lueck(2016_l2approx)}. Why it does not follow from known
    facts about spectral density functions is explained
    in~\cite[Lemma~17.2 on page~318]{Lueck(2016_l2approx)}, whereas
    in~\cite[Remark~17.3 on page~321]{Lueck(2016_l2approx)} it is discussed why there is some hope that
    it may hold in many situations. In~\cite[Theorem~17.6 on page~322]{Lueck(2016_l2approx)} a
    sufficient condition for the validity of the uniform integrability condition is
    described.
  \end{remark}

  \begin{remark}[(Very poor) status of
    Conjecture~\ref{con:Approximation_conjecture_for_Fuglede-Kadison_determinants_with_arbitrary_index}]%
    \label{rem:status_of_appr_FK-det}
    Unfortunately, there is no infinite group $G$ besides infinite virtually cyclic groups
    for which the Approximation
    Conjecture~\ref{con:Approximation_conjecture_for_Fuglede-Kadison_determinants_with_arbitrary_index}
    for Fuglede-Kadison determinants is known to be true.  It holds for finite groups $G$
    for trivial reasons.  For $G = \IZ$ (and hence for every infinite virtually cyclic group)
    Conjecture~\ref{con:Approximation_conjecture_for_Fuglede-Kadison_determinants_with_arbitrary_index}
    has been proven by Schmidt~\cite{Schmidt(1995)}, see also~\cite[Lemma~13.53 on
  page~478]{Lueck(2002)} and~\cite[Lemma~7.25]{Lueck(2018)}.  Note that the proof of this
    very special case already requires some input from the theory of diophantine equations, namely the
    estimate~\cite[13.65]{Lueck(2002)} taken from~\cite[Corollary~B1 on page~30]{Shorey-Tijdeman(1986)}.

    We conclude from~\cite[Theorem~15.7]{Lueck(2016_l2approx)}
    that in the situation of the Approximation Conjecture for
    Fuglede-Kadison determinants~\ref{con:Approximation_conjecture_for_Fuglede-Kadison_determinants_with_arbitrary_index}
    we always have the inequality
    \begin{eqnarray}\label{inequality_for_Det_Appr}
         \lefteqn{\dim_{\caln(G)}\bigl(\ker\bigl(r_A^{(2)}\colon L^2(G)^r \to L^2(G)^s
          \bigr)\bigr)}
        & &
        \\ & \hspace{7mm} \ge   &
        \limsup_{i \to \infty} \;\dim_{\caln(G/G_i)}\big(\ker
                                \big(r_{A[i]}^{(2)}\colon L^2(G/G_i)^r \to L^2(G/G_i)^s \bigr)\bigr)
           \nonumber
    \end{eqnarray}
    provided  that each quotient $G/G_i$ satisfies the Determinant Conjecture~\ref{con:Determinant_Conjecture}.
    If $G = \IZ^n$ and each $G/G_i$ is finite, then the inequality~\eqref{inequality_for_Det_Appr}  for the limit superior
    is known to be an equality by
  L\^e~\cite[Theorem~3]{Le(2014Mahler)}, see also Raimbault~\cite{Raimbault(2012abelian)}.
\end{remark}

  \begin{remark}The relationship between Conjecture~\ref{con:Modified_Homological_torsion_growth_and_L2-torsion}
    about homological torsion growth and $L^2$-torsion
    and the Approximation Conjecture~\ref{con:Approximation_for_topological_torsion}  for finite index normal chains
   is described in terms of regulators
   in~\cite[Sections~8 -- 10]{Lueck(2016_l2approx)}.
  \end{remark}

  %----------------------------------------------------------------------------

  \subsection{More general setups}\label{subsec:More_general_setups}
  Some of the approximation conjectures above can also be formulated in
  more generality or a slightly different context. One can replace the normal chain $\{G_i\}$ indexed
  by $i \in \{0,1,2, \ldots \}$ by an inverse system
  $\{G_i \mid i \in I\}$ of normal subgroups of $G$ directed by
  inclusion over the directed set $I$ such that
  $\bigcap_{i \in I} G_i = \{1\}$,
  see~\cite[Section~13]{Lueck(2016_l2approx)}.  One may consider so
  called \emph{Farber sequences}, where the subgroups $G_i$ are not
  necessarily normal but normal in an asymptotic sense,
  see~\cite[Section~10]{Abert-Bergeron-Fraczyk-Gaboriau(2021)}. There is the notion of Benjamini-Schramm convergence,
  see~\cite[Definition~1.1]{Abert-Bergeron-Biringer-Gelander-Nikolov-Raimbault-Samet(2017)}.
  Or one can consider convergence in the space or marked groups,
  see~\cite[Section~1.3]{Jaikin-Zapirain+Lopez-Alvarez(2020)}.

  %%%%%%%%%%%%%%%%%%%%%%%%%%%%%%%%%%%%%%%%%%%%%%%%%%%%%%%%%%%%%%%%%%%%%%%%%%%%%%%%%
  %%%%%%%%%%%%%%%%%%%%%%  Section 8: $L^2$-invariants and simplicial volume   %%%%%%%%%%%%%%%%%%%%%%
  %%%%%%%%%%%%%%%%%%%%%%%%%%%%%%%%%%%%%%%%%%%%%%%%%%%%%%%%%%%%%%%%%%%%%%%%%%%%%%%%%

  \typeout{------------------------ Section 8: $L^2$-invariants and simplicial volume -------------------}

  \section{$L^2$-invariants and simplicial volume}\label{sec:L2-invariants_and_simplicial_volume}

  The simplicial volume of a manifold is a topological variant of the (Riemannian) volume
  which agrees with it for hyperbolic manifolds up to a dimension constant and was introduced by Gromov~\cite{Gromov(1982)}.

  \begin{conjecture}[Simplicial volume and $L^2$-invariants]\label{con:simplicial_volume_and_L2-invariants}
  Let $M$ be an aspherical closed orientable manifold of dimension $\ge 1$.
  Suppose that its simplicial volume $\lVert M \rVert$ vanishes. Then
  $\widetilde{M}$ is of determinant class
  and
  \begin{eqnarray*}
  b_p^{(2)}(\widetilde{M}) & = & 0 \hspace{5mm} \mbox{ for } p \ge 0;
  \\
  \rho^{(2)}(\widetilde{M}) & = & 0.
  \end{eqnarray*}
  \end{conjecture}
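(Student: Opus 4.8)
The plan is to reduce the three conclusions to properties of the fundamental group $\pi = \pi_1(M)$, to bound the $L^2$-invariants from above by a measure-theoretic refinement of the simplicial volume, and then to show that $\lVert M\rVert = 0$ forces that refinement to vanish.

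\emph{Reduction via asphericity.} Since $M$ is aspherical, the classifying map $M \to B\pi$ is a homotopy equivalence, so $\lVert M\rVert$ depends only on $\pi$ and the image of the fundamental class in $H_{\dim(M)}(B\pi;\IR)$, and by homotopy invariance (see Theorem~\ref{thm:main_properties_of_rho2(widetildeX)}) the numbers $b_p^{(2)}(\widetilde M)$ and -- once $\widetilde M$ is known to be of determinant class -- the torsion $\rho^{(2)}(\widetilde M)$ are invariants of $\pi$ and of $[M]$ alone. Thus one is free to replace $M$ by a convenient finite model of $B\pi$ and to argue with $\pi$ and its fundamental cycle.

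\emph{Parametrized upper bounds.} The next step is to pass to the integral foliated simplicial volume $\lVert M\rVert^{\mathrm{fol}}_{\IZ}$ of Gromov, where the real fundamental cycle of $M$ is represented by singular chains with coefficients in $L^\infty$ of a standard probability $\pi$-space and the natural $L^1$-norm is minimized. Here one invokes the inequality $b_p^{(2)}(\widetilde M) \le \lVert M\rVert^{\mathrm{fol}}_{\IZ}$ for all $p \ge 0$ (Schmidt; see also Frigerio--L\"oh--Pagliantini--Sauer), and, for the torsion, the mechanism that a measured fundamental cycle of small $L^1$-norm produces a small finite free $\caln(\pi)$-chain complex chain homotopy equivalent to $\caln(\pi)\otimes_{\IZ\pi}C_*(\widetilde M)$, which should force $\widetilde M$ to be of determinant class and yield a dimensional bound $|\rho^{(2)}(\widetilde M)| \le C(\dim(M))\cdot \lVert M\rVert^{\mathrm{fol}}_{\IZ}$. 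Granting these estimates, the conjecture reduces to proving $\lVert M\rVert^{\mathrm{fol}}_{\IZ} = 0$.

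\emph{The main obstacle.} What remains is the implication $\lVert M\rVert = 0 \Rightarrow \lVert M\rVert^{\mathrm{fol}}_{\IZ} = 0$ for aspherical closed orientable $M$, which is exactly Gromov's (still open) conjecture that ordinary and integral foliated simplicial volume coincide for closed aspherical manifolds. It is known when $\pi$ is amenable (both sides vanish, the $L^2$-side by Cheeger--Gromov), for closed hyperbolic $3$-manifolds (L\"oh--Pagliantini), for certain fibre bundles, and whenever $M$ admits an amenable open cover of multiplicity at most $\dim(M)$ -- in which case one straightens the cycle and uses a partition of unity over the probability space to manufacture measured fundamental cycles of arbitrarily small norm. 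The difficulty is that $\lVert M\rVert = 0$ is a statement about real (bounded) cohomology and does not, as far as is known, produce either such an amenable cover of small multiplicity or a small measured cycle; bridging this gap is equivalent to the conjectural characterization of vanishing simplicial volume in terms of amenable category, and this is the hard part. A secondary obstacle concerns the torsion: even granting $\lVert M\rVert^{\mathrm{fol}}_{\IZ} = 0$, control of $\rho^{(2)}(\widetilde M)$ along a minimizing sequence of measured cycles requires uniform Fuglede--Kadison determinant estimates, currently available only under the Determinant Conjecture~\ref{con:Determinant_Conjecture} for $\pi$ (which holds for all sofic groups, hence only a mild additional hypothesis, but is not known in general).
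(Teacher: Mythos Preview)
This statement is a \emph{conjecture}, not a theorem: the paper does not prove it and explicitly says that ``no essential breakthrough has been made'' and that ``all evidence \ldots\ has been computational, but there is no structural reason known for it.'' So there is no proof in the paper to compare your proposal against.

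What you have written is not a proof either, and you are honest about this: your outline is precisely the Gromov--Schmidt strategy via integral foliated simplicial volume that the paper itself sketches (the inequality $b_p^{(2)}(\widetilde M)\le \lVert M\rVert^{\mathrm{fol}}_{\IZ}$ is Schmidt's Corollary~5.28, cited in the paper), and you correctly isolate the main obstacle, namely the open implication $\lVert M\rVert = 0 \Rightarrow \lVert M\rVert^{\mathrm{fol}}_{\IZ} = 0$ for aspherical closed $M$, which the paper also flags as open. In that sense your proposal and the paper's discussion are in complete agreement about both the strategy and where it currently stalls.

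One genuine gap in your outline, beyond the acknowledged main obstacle: the torsion estimate you invoke, $|\rho^{(2)}(\widetilde M)| \le C(\dim M)\cdot \lVert M\rVert^{\mathrm{fol}}_{\IZ}$, is not established in the literature. Schmidt's result bounds only the $L^2$-Betti numbers; the mechanism you describe (a measured fundamental cycle of small norm producing a small $\caln(\pi)$-chain model with controlled Fuglede--Kadison determinants) is a plausible heuristic, but turning it into an actual inequality would itself be a substantial new theorem, not just a citation. So even granting the (open) equality of ordinary and foliated simplicial volume for aspherical manifolds, your argument for $\rho^{(2)}(\widetilde M)=0$ would still be incomplete.
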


  Gromov first asked in~\cite[Section~8A on page~232]{Gromov(1993)} whether under the
  conditions in Conjecture~\ref{con:simplicial_volume_and_L2-invariants} the Euler characteristic of
  $M$ vanishes, and notes that in all available examples even the $L^2$-Betti numbers of
  $M$ vanish.  The part about $L^2$-torsion appears
  in~\cite[Conjecture~3.2]{Lueck(1994a)}.

  Conjecture~\ref{con:simplicial_volume_and_L2-invariants} is discussed in detail
  in~\cite[Chapter~14]{Lueck(2002)}. No essential breakthrough has been made since then,
  but there are some interesting papers connected to this problem, such
  as~\cite{Frigerio-Loeh-Pagliantini_Sauer(2016), Sauer(2009amen), Sauer(2016)}.  So far,
  all evidence for Conjecture~\ref{con:simplicial_volume_and_L2-invariants} has been computational,
  but there is no structural reason known for it.  It is intriguing, since it relates
  rather different invariants to one another and is one of the typical conjectures, which
  only make sense for closed manifolds when they are aspherical.

  For an aspherical closed orientable manifold of dimension $\ge 1$ its simplicial volume $\lVert M \rVert$
  vanishes if $\pi_1(M)$ is amenable, see~\cite[page 40]{Gromov(1982)} and~\cite[Theorem
  4.3]{Ivanov(1987)}.  However, it is not known whether $\lVert M \rVert$ vanishes if $\pi_1(M)$
  contains a normal infinite amenable subgroup or at least an elementary amenable infinite
  normal subgroup.  Recall  that $\rho^{(2)}(\widetilde{M})$ vanishes if $\pi_1(M)$ contains a
  normal infinite elementary amenable subgroup,
  see~\eqref{list:main_properties_of_rho2(widetildeX):aspherical} in
  Subsection~\ref{subsec:Basic_properties_of_L2-Betti_numbers_and_L2-torsion}, but it is not
  known wether $\rho^{(2)}(\widetilde{M})$ vanishes if $\pi_1(M)$ contains a normal
  infinite amenable subgroup.

  As an approach to Conjecture~\ref{con:simplicial_volume_and_L2-invariants},
  Gromov suggested in~\cite{Gromov(1999a)} to use a new invariant, the integral foliated simplicial volume, to
  estimate $L^2$-Betti numbers from above.  This was carried out in~\cite[Corollary
  5.28]{Schmidt(2005)}.  However, it is still open whether vanishing of simplicial volume
  implies vanishing of integral foliated simplicial volume (of oriented closed aspherical
  manifolds).

  %%%%%%%%%%%%%%%%%%%%%%%%%%%%%%%%%%%%%%%%%%%%%%%%%%%%%%%%%%%%%%%%%%%%%%%%%%%%%%%%%
  %%%%%%%%%%%%%%%%%%%%%%%%  Section 9: L^2-invariants of groups %%%%%%%%%%%%%%%%%%%%%%%%%%%%%%
  %%%%%%%%%%%%%%%%%%%%%%%%%%%%%%%%%%%%%%%%%%%%%%%%%%%%%%%%%%%%%%%%%%%%%%%%%%%%%%%%%

  \typeout{------------------- Section 9: L^2-invariants of groups -------------------}

  \section{$L^2$-invariants of groups}\label{sec:L2-invariants_of_groups}

  Recall that $L^2$-Betti numbers $b_n^{(2)}(G)$  of a group $G$ were defined in~\eqref{b_n_upper_(2)(G)}.
  We call a group $G$ \emph{admissible}, if there exists a finite $CW$-model $BG$ for its
  classifying space, we have $b_n^{(2)}(EG;\caln(G)) = 0$ for $n \ge 0$, and $G$ satisfies the
  Determinant Conjecture~\ref{con:Determinant_Conjecture}. We define the \emph{$L^2$-torsion
    of an admissible group $G$}
  \begin{equation}
  \rho^{(2)}(G) =  \rho^{(2)}(EG;\caln(G)) \in \IR
  \label{rho_upper_(2)(G)}
  \end{equation}
  by the $L^2$-torsion of $EG = \widetilde{BG}$. Since two models for
  $EG$ are $G$-homotopy equivalent and $G$ satisfies the
  Determinant Conjecture~\ref{con:Determinant_Conjecture}, the real number
  $\rho^{(2)}(EG;\caln(G))$ is well-defined and is independent of the choice of
  the model for $EG$. Hence there notion of the $L^2$-torsion of an admissible group in~\eqref{rho_upper_(2)(G)}
  makes sense.

  % ---------------------------------------------------------------------------

  \subsection{Vanishing results}\label{Vanishing_results}

  Some vanishing criterions for the $L^2$-Betti numbers $b_n^{(2)}(G)$ of a group can be
  found in~\cite[Theorem~7.2 on page 294 and Theorem~7.4 on page 295]{Lueck(2002)}. For
  instance, if $G$ contains a normal subgroup $H$ such that $b_n^{(2)}(H) = 0$ for all
  $n \le d$ for some fixed natural number $d$, then $b_n^{(2)}(G) = 0$ for all $n \le d$. If
  $G$ contains an amenable infinite normal subgroup, then $b_n^{(2)}(G) = 0$ holds for all
  $n \ge 0$.  If $G$ occurs as an extension $1 \to H \to G \to Q \to 1$ of infinite groups such that $H$
  is finitely generated or, more generally, satisfies $b_1^{(2)}(H) < \infty$, then $b_1^{(2)}(G) = 0$.
  See~\cite{Sanchez-Peralta(2023)} and~\cite{Sauer-Thom(2010)}
  for generalizations of the last assertion.  If $G$ is
  admissible and contains an elementary amenable infinite normal subgroup, then
  $\rho^{(2)}(G) = 0$ holds, see~\eqref{list:main_properties_of_rho2(widetildeX):aspherical}
  in Subsection~\ref{subsec:Basic_properties_of_L2-Betti_numbers_and_L2-torsion}. A very
  interesting interaction between the notions of $L^2$-torsion and entropy is developed
  in~\cite{Li-Thom(2014)} and used to show that for any admissible group which is amenable,
  $\rho^{(2)}(G) = 0$ holds, see~\cite[Theorem~1.3]{Li-Thom(2014)}. It is unknown whether
    $\rho^{(2)}(G) = 0$ vanishes if $G$ contains an amenable infinite normal
    subgroup.

    \begin{remark}\label{rem:combinatorial_computation}
      A combinatorial computation of $L^2$-invariants is described
      in~\cite[Section~3.7]{Lueck(2002)} Given a matrix $A \in M_{m,n}(\IC G)$, one can
      assign to it its \emph{characteristic sequence} $\{c(A,K)_n\}$ for some large
      enough number $K$.
      The sequence $\{c(A,K)_n\}$ is monotone decreasing sequence of
      non-negative real numbers that converges to the von Neumann dimension
      $\dim_{\caln(G)}(\ker(R_A))$ of the kernel of the induced $\caln(G)$-homomorphism
      $R_A \colon \caln(G)^m \to \caln(G)^n$. There is some control over the speed of
      convergence, which is often very fast, actually exponentially in $n$. Analogously there
      is a way of computing the Fuglede-Kadison determinant of $R_A$ in terms of the
      characteristic sequence. Given an algorithm to decide the word problem in $G$, one
      obtains an algorithm to compute the characteristic sequence. Unfortunately this
      algorithm seems to be exponentially running. But at least each $c(A,K)_n$ gives an
      upper bound for $\dim_{\caln(G)}(R_A)$.  This may be useful in view of the Atiyah
      Conjecture~\ref{con:Atiyah_Conjecture} to prove the vanishing of $L^2$-Betti numbers
      $b_n^{(2)}(\widetilde{X})$ for a finite $CW$-complex $X$ for which there is an upper
      bound on the orders of the finite subgroups of $\pi_1(X)$.  See also
      Remark~\ref{rem:Atiyah_without_bound_on_htr_orders_of_finite_subgroups}.
    \end{remark}

    % ---------------------------------------------------------------------------

    \subsection{The first $L^2$-Betti number and applications to group theory}%
    \label{subsec:The_first_L2-Betti_numbers_and_applications_to_group_theory}%

    The vanishing of $b_1^{(2)}(G)$ has some interesting consequences, provided that $G$ is
    finitely presented.  Namely, it implies that the deficiency of $G$ is bounded from above
    by $1$ and that for any oriented closed manifold $M$ of dimension $M$ the inequality
    $|\sign(M)| \le \chi(M)$ holds for its signature $\sign(M)$ and its Euler characteristic
    $\chi(M)$, see~\cite[Theorems~5.1 and~6.1]{Lueck(1994b)}. The following result is due to
    Kielak~\cite[Theorem~5.3]{Kielak(2020fibring)} and generalizes the work of
    Agol~\cite{Agol(2008)}

      \begin{theorem}\label{the:First_L2_Betti_number_and_fibering}
        Let G be an infinite finitely generated group which is virtually \textup{RFRS},
        where \textup{RFRS} stands for residually finite rationally solvable.

        Then G is virtually fibered, in the sense that it admits a finite index subgroup
        mapping onto Z with a finitely generated kernel, if and only if $b^{(2)}_1(G) = 0$
        holds.
      \end{theorem}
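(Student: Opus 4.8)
The plan is to prove the two implications separately, after first reducing to the case that $G$ itself is \textup{RFRS}. Both the hypothesis $b_1^{(2)}(G) = 0$ and the conclusion (virtual fibring) are insensitive to passage to a finite-index subgroup: the \textup{RFRS} property is inherited by subgroups, virtual fibring is by definition a statement about finite-index subgroups, and $b_1^{(2)}$ is multiplicative under finite-index inclusions, i.e.\ $b_1^{(2)}(H) = [G:H]\cdot b_1^{(2)}(G)$, so $b_1^{(2)}(H) = 0$ if and only if $b_1^{(2)}(G) = 0$ (compare Theorem~\ref{thm:main_properties_of_rho2(widetildeX)}~\eqref{list:main_properties_of_rho2(widetildeX):multiplicativity}). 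For the forward (``$\Rightarrow$'', easy) direction: if $G$ virtually fibres, choose a finite-index $H\le G$ sitting in an extension $1\to K\to H\to\IZ\to 1$ with $K$ finitely generated; since $K$ and $\IZ$ are both infinite, $b_1^{(2)}(H) = 0$ by the standard vanishing result for such extensions (the mapping-torus/extension phenomenon recalled in Section~\ref{sec:Basics_about_L2-invariants}), and hence $b_1^{(2)}(G) = 0$.

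The real content is the reverse direction, so assume from now on that $G$ is \textup{RFRS}, finitely generated, infinite, with $b_1^{(2)}(G) = 0$. The key preliminary observation is that \emph{\textup{RFRS} groups are locally indicable}: if $H\ne 1$ were a finitely generated subgroup with $b_1(H;\IQ) = 0$, then in a \textup{RFRS} chain $H = H_0\supseteq H_1\supseteq\cdots$ for $H$ (which is again \textup{RFRS}) the defining property forces $H_1\supseteq\ker\bigl(H\to H_1(H;\IZ)/\tors\bigr) = H$, hence $H_i = H$ for all $i$, contradicting $\bigcap_i H_i = \{1\}$. So every nontrivial finitely generated subgroup of $G$ surjects onto $\IZ$, i.e.\ $G$ is locally indicable. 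By Theorem~\ref{the:status_of_Atyah_Conjecture} this means $G$ satisfies the Atiyah Conjecture over $\IQ$, and then Theorem~\ref{the:Main_properties_of_cald(G)} gives that the division closure $\cald(G)$ is a \emph{skew field} with $\IQ G\hookrightarrow\cald(G)$ and $b_n^{(2)}(G) = \dim_{\cald(G)}H_n(G;\cald(G))$. In particular $H_1(G;\cald(G)) = 0$ (from $b_1^{(2)}(G) = 0$) and $H_0(G;\cald(G)) = 0$ (from $G$ infinite, $b_0^{(2)}(G) = 0$).

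It remains to convert this homological vanishing over $\cald(G)$ into an actual character on a finite-index subgroup with finitely generated kernel. Here one uses the Bieri--Neumann--Strebel invariant $\Sigma^1$: a primitive class $\phi\in H^1(H;\IZ)$ has finitely generated kernel precisely when both $[\phi]$ and $[-\phi]$ lie in $\Sigma^1(H)$, and by Sikorav's theorem membership in $\Sigma^1$ is equivalent to the vanishing (or finite generation) of a suitable Novikov homology $H_*(H;\widehat{\IZ H}^{\phi})$ in low degrees. The idea is to build a Novikov-type localisation $\widehat{\cald(G)}^{\phi}$ of the skew field, to observe that vanishing of $H_1(G;\cald(G))$ forces vanishing of the associated Novikov homology \emph{generically in $\phi$}, and then to invoke the \textup{RFRS} chain to pass to a finite-index subgroup $H\le G$ on which the ``generic'' direction is realised by an honest integral character. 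In the language of Friedl--L\"uck's $L^2$-torsion polytope (which can be set up here using only $b_0^{(2)} = b_1^{(2)} = 0$, without full $L^2$-acyclicity, together with the skew-field structure), the \textup{RFRS} condition guarantees a finite-index subgroup $H$ and a primitive $\phi\in H^1(H;\IZ)$ pairing maximally with a vertex of the polytope of $H$, which is exactly the condition placing $[\phi]$ and $[-\phi]$ in $\Sigma^1(H)$; thus $\ker\phi$ is finitely generated and $G$ is virtually fibred.

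The main obstacle, and the technical heart of the argument, is precisely the passage between the von Neumann / skew-field invariant $\dim_{\cald(G)}H_1(G;\cald(G))$ and the Novikov homology governing $\Sigma^1$: a priori these live over quite different completions of $\IZ G$, and ``$\dim = 0$ over $\cald(G)$'' is a global statement that on its own singles out no character. The \textup{RFRS} property is exactly what bridges this gap, via an induction that descends through the finite-index subgroups $G_i$ of a \textup{RFRS} chain, at each stage exchanging part of the polytope for a new rational direction, and one must show the process terminates in a finite-index subgroup carrying a genuine fibring character. Making this precise --- defining and controlling the agrarian/$L^2$-torsion polytope under the weaker hypothesis $b_1^{(2)} = 0$, proving its duality with $\Sigma^1$, and establishing termination of the \textup{RFRS} induction --- is where essentially all the work lies; the specialisation $G = \pi_1(M)$ for $M$ a $3$-manifold then recovers Agol's virtual fibring theorem.
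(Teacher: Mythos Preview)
The paper does not supply its own proof of this theorem; it simply attributes the result to Kielak~\cite[Theorem~5.3]{Kielak(2020fibring)} and states it without argument. So there is no ``paper's proof'' to compare against beyond the citation itself.

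That said, your sketch is a faithful high-level outline of Kielak's actual argument: the reduction to a \textup{RFRS} subgroup via multiplicativity of $b_1^{(2)}$, the easy direction using the extension vanishing result, local indicability of \textup{RFRS} groups yielding the Atiyah Conjecture and hence a skew field $\cald(G)$ with $H_1(G;\cald(G))=0$, and then the hard direction converting this vanishing into an honest fibring character via the Bieri--Neumann--Strebel invariant, Sikorav's criterion, and an induction down a \textup{RFRS} chain. One small correction: Kielak's published argument proceeds directly with Ore localisations and Novikov-type overrings of the Linnell skew field, not through the Friedl--L\"uck $L^2$-torsion polytope (which in its standard formulation requires full $L^2$-acyclicity, not just $b_0^{(2)}=b_1^{(2)}=0$); the polytope language you invoke is closer in spirit to Kielak's companion paper~\cite{Kielak(2020)}. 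As you yourself concede, the genuine content --- passing from global vanishing over $\cald(G)$ to a Novikov-homology statement for a specific integral character on some finite-index subgroup, and showing the \textup{RFRS} induction terminates --- is left as a roadmap rather than carried out, so what you have is an accurate plan but not a proof.
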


      There are relations between the non-vanishing of $b_1^{(2)}(G)$ and the question
      whether the finitely presented group $G$ is \emph{large}, i.e, has a subgroup of
      finite index which maps surjectively to a non-abelian free group, see for
      instance~\cite[Theorem~1.4]{Lackenby(2009propery_tau)},~\cite[Theorem~1.6]{Lackenby(2010large)}.

      The following questions are taken from~\cite[Section~4]{Lueck(2016_l2approx)}, where
      also more explanations and references to the literature, such
      as~\cite{Abert-Jaikin-Zapirain-Nikolov(2011), Abert-Nikolov(2012),
        Bridson-Kochloukova(2017), Ershov-Lueck(2014), Gaboriau(2000b),
        Gaboriau(2002a),Gaboriau(2002b), Lackenby(2005expanders), Lueck-Osin(2011),
        Osin(2011_rankgradient), Schlage-Puchta(2012)}, are given.

      \begin{question}[Rank gradient, cost, and first $L^2$ Betti number]%
        \label{que:Rank_gradient_cost_and_first_L2_Betti_number}
        Let $G$ be an infinite finitely generated residually finite group. Let
        $(G_i)_{i \ge 0}$ be a descending chain of normal subgroups of finite index of $G$
        with $\bigcap_{i \ge 0} G_i=\{1\}$.

        Do we have
        \[
          b_1^{(2)}(G) = {\rm cost}(G)-1 = \RG(G;(G_i)_{i \ge 0})?
        \]
      \end{question}

      \begin{question}[Rank gradient, cost, first $L^2$-Betti number, and approximation]%
        \label{que:Rank_gradient_cost_first_L2_Betti_number_and_approximation}
        Let $G$ be a finitely presented infinite residually finite group. Let $(G_i)$ be a descending
        chain of normal subgroups of finite index of $G$ with
        $\bigcap_{i \ge 0 } G_i=\{1\}$.  Let $F$ be any field.

        Do we have
        \[
          \lim_{i \to \infty} \frac{b_1(G_i;F)-1}{[G:G_i]} = b_1^{(2)}(G) - b_0^{(2)}(G) =
          \costoper(G)-1 = \RG(G;(G_i)_{i \ge 0})?
        \]
      \end{question}

      Note that a positive answer to the questions above also includes the statement, that
      $\lim_{i \to \infty} \frac{b_1(X[i];F)}{[G:G_i]}$ and $\RG(G;(G_i)_{i \ge 0})$ are
      independent of the chain and the characteristic of $F$. It is  possible that the answer is positive also in the
      case, where the characteristic of $F$ is not zero, since in
      Remark~\ref{rem:not_the_L2-Betti_number} the counterexamples of
      Avramidi-Okun-Schreve~\cite[Theorem~1]{Avramidi-Okun-Schreve(2021)} do not work in degree
      $n = 1$.

  \begin{question}\label{que:First_L2_Betti_number_and_cost}
    Let $G$ be a finitely generated group. Do we have
    \[
      b_1^{(2)}(G) = \costoper(G)-1
    \]
    and is the Fixed Price Conjecture true?   (The Fixed Price Conjecture predicts that the cost
    of every standard action of $G$, i.e., an essentially free $G$-action on a standard
    Borel space with $G$-invariant probability measure, is equal to the cost of $G$.)
  \end{question}

  Higher rank versions of the rank gradient are discussed
  in~\cite[Section~5]{Lueck(2016_l2approx)}.

  % ---------------------------------------------------------------------------

  \subsection{The $L^2$-torsion and applications to group theory}%
  \label{subsec:The_L2-torsion_numbers_and_applications_to_group_theory}%

  As we have explained above, $L^2$-Betti numbers have been exploited for group theory. We
  think that there is a lot of potential for the $L^2$-torsion of a group $G$ to have
  striking applications to group theory, and we encourage group theorists to work on these types of questions.

  A typical question is the following. If $M$ is a closed hyperbolic manifold of odd
  dimension, then $\rho^{(2)}(\pi_1(M))$ is a up to a constant depending only on the dimension the volume of $M$,
  see~\cite{Hess-Schick(1998), Lueck-Schick(1999)}.  Since $\pi_1(M)$ is a word-hyperbolic
  group, one may ask what $\rho^{(2)}(G)$ measures for a $\det$-$L^2$-acyclic torsionfree
  word-hyperbolic group.

  We want to mention the following invariant of an automorphism $f \colon BG \to BG$ of a
  group $G$ for which there exists a finite model for $BG$ and which satisfies the
  Determinant Conjecture~\ref{con:Determinant_Conjecture}.  Then $G \rtimes_f \IZ$ is
  admissible, and hence we can define the $L^2$-torsion of $f$ by
  \begin{equation}\label{rho_upper_(2)(f)}
    \rho^{(2)}(f) := \rho^{(2)}(G \rtimes_f \IZ).
  \end{equation}

  This invariant has the following properties, see~\cite[Theorem~7.27 on
  page~305]{Lueck(2002)}.

  \begin{theorem}\label{the:basic_properties_of_L2-torsion_of_automorphisms}
    Suppose that all groups appearing below have finite $CW$-models for their classifying
    spaces and satisfy the Determinant Conjecture~\ref{con:Determinant_Conjecture}.
    \begin{enumerate}

    \item\label{the:basic_properties_of_L2-torsion_of_automorphisms:amalgamated_products}
      Suppose that $G$ is the amalgamated product $G_1\ast_{G_0} G_2$ for subgroups
      \mbox{$G_i \subset G$} and the automorphism $f\colon G \to G$ is the amalgamated product
      $f_1 \ast_{f_0} f_2$ for automorphisms $f_i\colon G_i \to G_i$. Then
      \[
        \rho^{(2)}(f) = \rho^{(2)}(f_1) + \rho^{(2)}(f_2) - \rho^{(2)}(f_0);
      \]

    \item\label{the:basic_properties_of_L2-torsion_of_automoprhisms:trace_property} Let
      $f\colon G \to H$ and $g\colon H \to G$ be isomorphisms of groups.  Then
      \[
        \rho^{(2)}(f \circ g) = \rho^{(2)}(g \circ f).
      \]
      In particular $\rho^{(2)}(f)$ is invariant under conjugation with automorphisms;

    \item\label{the:basic_properties_of_L2-torsion_of_automoprhisms:additivity} Suppose that
      the following diagram of groups
      \[
        \xymatrix{1 \ar[r] & G_1 \ar[d]^{f_1} \ar[r]^i & G_2 \ar[d]^{f_2} \ar[r]^p & G_3
          \ar[d]^{\id} \ar[r] & 1
          \\
          1 \ar[r] & G_1 \ar[r]^i & G_2 \ar[r]^p & G_3 \ar[r] & 1 }
      \]
      commutes, has exact rows and its vertical arrows are automorphisms. Then
      \[
        \rho^{(2)}(f_2) = \chi(BG_3) \cdot \rho^{(2)}(f_1);
      \]

    \item\label{the:basic_properties_of_L2-torsion_of_automoprhisms:multiplicativity} Let
      $f\colon G \to G$ be an automorphism of a group. Then for all integers $n \ge 1$
      \[
        \rho^{(2)}(f^n) = n \cdot \rho^{(2)}(f);
      \]

    \item\label{the:basic_properties_of_L2-torsion_of_automoprhisms:G_acyclic} We have
      $\rho^{(2)}(f) = 0$, if $G$ satisfies one of the following conditions:

      \begin{enumerate}

      \item We have $b_n^{(2)}(G) = 0$ for every $n \ge 0$;

      \item $G$ contains an amenable infinite normal subgroup.

      \end{enumerate}

    \end{enumerate}
  \end{theorem}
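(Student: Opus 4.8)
The plan is to translate statements about automorphisms of groups into statements about mapping tori and then feed the relevant parts of Theorem~\ref{thm:main_properties_of_rho2(widetildeX)} into the computation. The key reduction is the following: if $G$ admits a finite $CW$-model for $BG$ and $Bf\colon BG \to BG$ is a cellular self-homotopy equivalence inducing $f$ on $\pi_1$, then the mapping torus $T_{Bf}$ is a finite $CW$-complex which is aspherical (use the fibration $BG \to T_{Bf} \to S^1$) and has $\pi_1(T_{Bf}) \cong G \rtimes_f \IZ$; hence $T_{Bf}$ is a finite model for $B(G \rtimes_f \IZ)$ and $\rho^{(2)}(f) = \rho^{(2)}(\widetilde{T_{Bf}})$. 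Moreover $\widetilde{T_{Bf}}$ is always $L^2$-acyclic by Theorem~\ref{thm:main_properties_of_rho2(widetildeX)}~\eqref{list:main_properties_of_rho2(widetildeX):mapping_tori}(a), hence $\det$-$L^2$-acyclic, since $G \rtimes_f \IZ$ satisfies the Determinant Conjecture~\ref{con:Determinant_Conjecture} and therefore $\widetilde{T_{Bf}}$ is of determinant class by Theorem~\ref{thm:main_properties_of_rho2(widetildeX)}~\eqref{list:main_properties_of_rho2(widetildeX):sofic_det-class}. (Admissibility of the various semidirect products with $\IZ$ occurring below is in fact automatic, as the Determinant Conjecture passes to extensions with amenable quotient.)

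Granting this dictionary, the five assertions become applications of the standard formulas. For~\eqref{the:basic_properties_of_L2-torsion_of_automorphisms:amalgamated_products} I would realize $BG = BG_1 \cup_{BG_0} BG_2$ as a pushout of aspherical complexes along cellular injections $BG_0 \hookrightarrow BG_i$ (legitimate because $G_0 \hookrightarrow G_i$) with $Bf$ restricting to chosen cellular representatives $Bf_i$ of $f_i$; taking mapping tori is compatible with this pushout, so $T_{Bf} = T_{Bf_1} \cup_{T_{Bf_0}} T_{Bf_2}$ with $T_{Bf_0} \hookrightarrow T_{Bf_1}$ a cellular inclusion. Each $\widetilde{T_{Bf_k}}$ is $\det$-$L^2$-acyclic by the reduction above, and the inclusions induce injections $G_k \rtimes_{f_k} \IZ \hookrightarrow G \rtimes_f \IZ$ on $\pi_1$ (injective on $G_k$ and compatible with the $\IZ$-extensions), so the sum formula Theorem~\ref{thm:main_properties_of_rho2(widetildeX)}~\eqref{list:main_properties_of_rho2(widetildeX):sum_formula}(b) gives the claim. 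Assertion~\eqref{the:basic_properties_of_L2-torsion_of_automoprhisms:trace_property} is purely algebraic: sending $a \mapsto f(a)$ on the base and $t \mapsto t$ on the stable letter defines an isomorphism $G \rtimes_{g \circ f} \IZ \xrightarrow{\cong} H \rtimes_{f \circ g} \IZ$, so the two $L^2$-torsions coincide; conjugation invariance then follows by applying this with $u := \alpha$ and $v := f \circ \alpha^{-1}$, since $u \circ v = \alpha \circ f \circ \alpha^{-1}$ while $v \circ u = f$. For~\eqref{the:basic_properties_of_L2-torsion_of_automoprhisms:additivity} the commuting diagram yields an extension $1 \to G_1 \rtimes_{f_1} \IZ \to G_2 \rtimes_{f_2} \IZ \to G_3 \to 1$, which one realizes geometrically as a fibre bundle $T_{Bf_1} \to T_{Bf_2} \to BG_3$ with $\pi_1$-injective fibre inclusion and $\det$-$L^2$-acyclic fibre, so the fibration formula Theorem~\ref{thm:main_properties_of_rho2(widetildeX)}~\eqref{list:main_properties_of_rho2(widetildeX):det-class:fibrations}(b) gives $\rho^{(2)}(f_2) = \chi(BG_3) \cdot \rho^{(2)}(f_1)$. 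For~\eqref{the:basic_properties_of_L2-torsion_of_automoprhisms:multiplicativity} the subgroup of $G \rtimes_f \IZ$ generated by $G$ together with $t^n$, where $t$ generates the $\IZ$-factor, is isomorphic to $G \rtimes_{f^n} \IZ$ and has index $n$, so $B(G \rtimes_{f^n} \IZ) \to B(G \rtimes_f \IZ)$ is an $n$-sheeted covering of finite $CW$-complexes with $\det$-$L^2$-acyclic universal covers, and multiplicativity under finite coverings Theorem~\ref{thm:main_properties_of_rho2(widetildeX)}~\eqref{list:main_properties_of_rho2(widetildeX):multiplicativity}(b) gives $\rho^{(2)}(f^n) = n \cdot \rho^{(2)}(f)$.

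For the vanishing assertion~\eqref{the:basic_properties_of_L2-torsion_of_automoprhisms:G_acyclic} the observation is that both hypotheses force $b_n^{(2)}(G) = 0$ for all $n \ge 0$: in case (a) this is assumed, and in case (b) it follows from the vanishing of $L^2$-Betti numbers for groups containing an infinite amenable normal subgroup (the first Example in Subsection~\ref{subsec:L2-Betti_numbers}). Note that one does not need the amenable subgroup to be $f$-invariant here, which is exactly why case (b) is elementary even though the analogous vanishing statement for $\rho^{(2)}$ of a group containing an infinite amenable normal subgroup is open in general. Once $b_n^{(2)}(G) = 0$ for all $n$, the space $EG = \widetilde{BG}$ is $L^2$-acyclic, hence $\det$-$L^2$-acyclic by the Determinant Conjecture, and Theorem~\ref{thm:main_properties_of_rho2(widetildeX)}~\eqref{list:main_properties_of_rho2(widetildeX):mapping_tori}(b) applied to $Bf\colon BG \to BG$ gives $\rho^{(2)}(f) = \rho^{(2)}(\widetilde{T_{Bf}}) = 0$.

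The main obstacle is not a single deep step but the uniform verification of the two recurring side conditions required by the quoted formulas: $\det$-$L^2$-acyclicity of every mapping torus in sight---automatic from Theorem~\ref{thm:main_properties_of_rho2(widetildeX)}~\eqref{list:main_properties_of_rho2(widetildeX):mapping_tori}(a) and~\eqref{list:main_properties_of_rho2(widetildeX):sofic_det-class} together with the standing Determinant Conjecture hypothesis---and $\pi_1$-injectivity of the relevant subcomplex and fibre inclusions, which reduces to injectivity of $G_0 \hookrightarrow G_i$ in~\eqref{the:basic_properties_of_L2-torsion_of_automorphisms:amalgamated_products} and of $G_1 \hookrightarrow G_2$ in~\eqref{the:basic_properties_of_L2-torsion_of_automoprhisms:additivity}, both preserved under forming semidirect products with $\IZ$. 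The one genuinely fiddly point I anticipate is verifying that the pushout and fibration descriptions of the mapping tori hold up to homotopy in a manner respecting the $CW$-structure and fibre-bundle hypotheses demanded by the sum and fibration formulas; this is routine mapping-cylinder bookkeeping, and I expect no conceptual difficulty there.
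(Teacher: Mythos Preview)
The paper does not prove this theorem itself but refers to \cite[Theorem~7.27 on page~305]{Lueck(2002)}; your reduction to mapping tori and subsequent appeal to the corresponding clause of Theorem~\ref{thm:main_properties_of_rho2(widetildeX)} for each assertion is precisely the argument given there, and parts~\eqref{the:basic_properties_of_L2-torsion_of_automorphisms:amalgamated_products}, \eqref{the:basic_properties_of_L2-torsion_of_automoprhisms:trace_property}, \eqref{the:basic_properties_of_L2-torsion_of_automoprhisms:multiplicativity} and~\eqref{the:basic_properties_of_L2-torsion_of_automoprhisms:G_acyclic} go through as you describe.

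For~\eqref{the:basic_properties_of_L2-torsion_of_automoprhisms:additivity} there is a mismatch between what you need and what you cite. Theorem~\ref{thm:main_properties_of_rho2(widetildeX)}~\eqref{list:main_properties_of_rho2(widetildeX):det-class:fibrations}(b), as recorded in this survey, requires the map $\pi_1(F) \to \pi_1(E)$ to be \emph{bijective} and $E \to B$ to be a locally trivial fibre bundle of finite $CW$-complexes. In your fibre sequence $T_{Bf_1} \to T_{Bf_2} \to BG_3$ the map on fundamental groups is only injective---it is the kernel inclusion $G_1 \rtimes_{f_1} \IZ \hookrightarrow G_2 \rtimes_{f_2} \IZ$ of the projection onto $G_3$---and arranging the projection to $BG_3$ as an honest fibre bundle rather than merely a fibration with the correct homotopy fibre is not automatic from the group-theoretic data. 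What you actually need is the more general multiplicativity of $L^2$-torsion for fibrations with $\pi_1$-injective fibre, which is available in \cite{Lueck(2002)} but not reproduced in the present survey. Your strategy is correct; the specific result you invoke from this paper is simply not strong enough as stated.
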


  Let $S$ be a compact connected orientable $2$-dimensional manifold, possibly with
  boundary. Let $f\colon S \to S$ be an orientation preserving homeomorphism. The mapping
  torus $T_f$ is a compact connected orientable $3$-manifold, whose boundary is empty or a
  disjoint union of $2$-dimensional tori. Then there is a maximal family of embedded
  incompressible tori, which are pairwise not isotopic and not boundary parallel, such that
  it decomposes $T_f$ into pieces that are Seifert or hyperbolic.  Let $M_1$, $M_2$,
  $\ldots$, $M_r$ be the hyperbolic pieces.  They all have finite volume $\vol(M_i)$. The
  following result is taken from~\cite[Theorem~7.28 on page~307]{Lueck(2002)}

  \begin{theorem}\label{the:rho(surface_homeomorphism)}
    If $S$ is $S^2$, $D^2$, or $T^2$, then $\rho^{(2)}(f) = 0$. Otherwise we get
    \[
      \rho^{(2)}(\pi_1(f)\colon \pi_1(S) \to \pi_1(S)) = \frac{-1}{6\pi} \cdot \sum_{i=1}^r
      \vol(M_i).
    \]
  \end{theorem}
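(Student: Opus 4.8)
The plan is to pass from the algebraic invariant $\rho^{(2)}(\pi_1(f))$ to the geometry of the mapping torus $T_f$, evaluate its $L^2$-torsion piece by piece along the JSJ decomposition, and show that only the hyperbolic pieces contribute. First I would reduce to $T_f$. Write $G=\pi_1(S)$. If $S$ is aspherical, i.e.\ $S\neq S^2$, then $T_f$ fibres over $S^1$ with aspherical fibre $S$ and aspherical base, so the homotopy long exact sequence forces $\pi_n(T_f)=0$ for $n\ge 2$ and gives a split extension $1\to G\to\pi_1(T_f)\to\IZ\to 1$ realising $\pi_1(T_f)\cong G\rtimes_f\IZ$. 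Hence the compact orientable $3$-manifold $T_f$ is a finite $CW$-model for $B(G\rtimes_f\IZ)$, so by~\eqref{rho_upper_(2)(f)} and~\eqref{rho_upper_(2)(G)} we have $\rho^{(2)}(\pi_1(f))=\rho^{(2)}(\widetilde{T_f})$. Moreover $\widetilde{T_f}$ is $L^2$-acyclic by Theorem~\ref{thm:main_properties_of_rho2(widetildeX)}~\eqref{list:main_properties_of_rho2(widetildeX):mapping_tori}, and $\pi_1(T_f)$, being a $3$-manifold group, is residually finite by Geometrization, hence sofic, hence satisfies the Determinant Conjecture~\ref{con:Determinant_Conjecture}; so $\widetilde{T_f}$ is of determinant class by Theorem~\ref{thm:main_properties_of_rho2(widetildeX)}~\eqref{list:main_properties_of_rho2(widetildeX):sofic_det-class}, hence $\det$-$L^2$-acyclic. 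The exceptional surfaces $S^2$, $D^2$, $T^2$ (together with the annulus, which is subsumed and hence not separately listed) are handled directly: in each case $G\rtimes_f\IZ$ contains an infinite amenable normal subgroup ($\IZ$ or $\IZ^2$), so Theorem~\ref{the:basic_properties_of_L2-torsion_of_automorphisms}~\eqref{the:basic_properties_of_L2-torsion_of_automoprhisms:G_acyclic} gives $\rho^{(2)}(f)=0$, and since the corresponding $T_f$ contains no hyperbolic piece the right-hand side vanishes as well, proving the first assertion.

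For the remaining $S$, the mapping torus $T_f$ is compact, orientable, irreducible (being orientable with $\pi_2=0$) with toral, possibly empty, boundary. I would take the JSJ/geometric decomposition along the maximal family of incompressible, pairwise non-isotopic, non-boundary-parallel tori in the statement, which cuts $T_f$ into Seifert pieces $N_1,\dots,N_s$ and hyperbolic pieces $M_1,\dots,M_r$, each $M_i$ being compact with torus boundary and interior of finite hyperbolic volume $\vol(M_i)$. Then I would apply the sum formula for $L^2$-torsion, Theorem~\ref{thm:main_properties_of_rho2(widetildeX)}~\eqref{list:main_properties_of_rho2(widetildeX):sum_formula}, inductively along these tori; the $\pi_1$-injectivity hypothesis holds because the tori are incompressible, and a non-separating JSJ torus is dealt with by the evident modification, inserting a product region $T^2\times[0,1]$ so that each gluing becomes a genuine pushout. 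The induction rests on three inputs: (i) each separating torus $T^2$ is $\det$-$L^2$-acyclic with $\rho^{(2)}(\widetilde{T^2})=0$, since $T^2$ is aspherical with amenable fundamental group; (ii) each Seifert piece $N_j$ has $\pi_1(N_j)$ containing the infinite cyclic normal subgroup generated by a generic fibre --- the small Seifert manifolds where this fails cannot occur as JSJ pieces of an irreducible $3$-manifold with infinite $\pi_1$ --- so $\widetilde{N_j}$ is $\det$-$L^2$-acyclic with $\rho^{(2)}(\widetilde{N_j})=0$ by Theorem~\ref{thm:main_properties_of_rho2(widetildeX)}~\eqref{list:main_properties_of_rho2(widetildeX):aspherical} (alternatively, pass to a finite cover that is an honest circle bundle over a compact surface and use Theorem~\ref{thm:main_properties_of_rho2(widetildeX)}~\eqref{list:main_properties_of_rho2(widetildeX):S_upper_1-actions} and~\eqref{list:main_properties_of_rho2(widetildeX):multiplicativity}); and (iii) each hyperbolic piece $M_i$ is $\det$-$L^2$-acyclic with $\rho^{(2)}(\widetilde{M_i})=\frac{-1}{6\pi}\vol(M_i)$. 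Granting (i)--(iii), the iterated sum formula collapses to $\rho^{(2)}(\widetilde{T_f})=\sum_{i=1}^r\rho^{(2)}(\widetilde{M_i})=\frac{-1}{6\pi}\sum_{i=1}^r\vol(M_i)$, which is the assertion.

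The main obstacle is input (iii): Theorem~\ref{thm:main_properties_of_rho2(widetildeX)}~\eqref{list:main_properties_of_rho2(widetildeX):hyperbolic} supplies the volume formula only for \emph{closed} hyperbolic manifolds, whereas the pieces $M_i$ are \emph{cusped} (and this genuinely occurs already at the base of the induction, e.g.\ when $S$ is a punctured surface and $T_f$ is a fibred hyperbolic knot complement). To bridge the gap one decomposes $M_i$ into its thick part and its cusp neighbourhoods $T^2\times[0,\infty)$, invokes the amenability of $\pi_1(T^2\times[0,\infty))\cong\IZ^2$ to see that the cusp neighbourhoods are $L^2$-acyclic and contribute nothing to the torsion, and then controls the analytic $L^2$-torsion near the cusps by explicit heat-kernel estimates on the hyperbolic cusp geometry; this is precisely the finite-volume hyperbolic computation of Hess--Schick and L\"uck--Schick cited in the excerpt, and it is where all the genuine analysis resides, the relevant constant being $C_1=\frac{1}{6\pi}$ from Theorem~\ref{thm:main_properties_of_rho2(widetildeX)}~\eqref{list:main_properties_of_rho2(widetildeX):hyperbolic}. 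A minor subsidiary point is to check that $\det$-$L^2$-acyclicity, not merely $L^2$-acyclicity, propagates through every stage of the induction, so that part~(b) and not only part~(a) of the sum formula may be applied; this is automatic once (i)--(iii) have been arranged.
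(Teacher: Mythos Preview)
The paper does not prove this theorem; it simply cites~\cite[Theorem~7.28 on page~307]{Lueck(2002)}. Your outline is correct and is precisely the argument underlying that reference: identify $\rho^{(2)}(\pi_1(f))$ with $\rho^{(2)}(\widetilde{T_f})$ via asphericality of $T_f$, apply the sum formula along the JSJ tori, kill the Seifert and toral contributions using the infinite cyclic normal subgroup generated by a regular fibre, and invoke the L\"uck--Schick finite-volume hyperbolic computation~\cite{Lueck-Schick(1999)} for the hyperbolic pieces. You have correctly located the only substantial analytic input, namely the extension of the closed hyperbolic volume formula of Theorem~\ref{thm:main_properties_of_rho2(widetildeX)}~\eqref{list:main_properties_of_rho2(widetildeX):hyperbolic} to the cusped case.

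One small slip: for $S=S^2$ and $S=D^2$ you invoke Theorem~\ref{the:basic_properties_of_L2-torsion_of_automorphisms}~\eqref{the:basic_properties_of_L2-torsion_of_automoprhisms:G_acyclic}, but that criterion is stated for $G=\pi_1(S)$, which is trivial in these two cases and therefore has neither an infinite amenable normal subgroup nor vanishing $b_0^{(2)}$. The fix is immediate: $G\rtimes_f\IZ=\IZ$ is itself admissible and elementary amenable, so $\rho^{(2)}(\IZ)=0$ directly by the vanishing result recorded in Subsection~\ref{Vanishing_results} (or by Theorem~\ref{thm:main_properties_of_rho2(widetildeX)}~\eqref{list:main_properties_of_rho2(widetildeX):aspherical} applied to $B\IZ=S^1$).
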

  The combinatorial approach for the computation of $\rho^{(2)}(f)$ in terms of characteristic
  sequences of Remark~\ref{rem:combinatorial_computation} is described in detail
  in~\cite[Subsection~7.34]{Lueck(2002)}. The favourite and so far unexplored case is,
  when $G$ is a finitely generated free group.

  \begin{question}\label{que:rho_upper_(2)(f)}
    Let $G$ be finitely generated non-abelian free group.

    \begin{enumerate}
    \item Is $\rho^{(2)}(f) \le 0$ for any automorphism $f$ of $G$;
    \item What is the structure of the countable set $\{\rho^{(2)}(f)\}$,
     where $f$ runs through the automorphisms of $G$?
   \item Given a real number $r < 0$, is the set of conjugacy classes of fully irreducible automorphisms
     $f$ of $G$ with $\rho^{(2)}(f) = r$
      finite?
    \end{enumerate}
  \end{question}

  We will discuss measure equivalence in Section~\ref{subsec:measure_equivalence}
  and twisted $L^2$-torsion in
  Section~\ref{sec:Twisting_with_finite-dimensional_representations}.

  %%%%%%%%%%%%%%%%%%%%%%%%%%%%%%%%%%%%%%%%%%%%%%%%%%%%%%%%%%%%%%%%%%%%%%%%%%%%%%%%%
  %%%%%%%%%%%%%%%%%%%%%  Section 10: Measure equivalence   %%%%%%%%%%%%%%%%%%%%%%%%%%%%%%%%%%%
  %%%%%%%%%%%%%%%%%%%%%%%%%%%%%%%%%%%%%%%%%%%%%%%%%%%%%%%%%%%%%%%%%%%%%%%%%%%%%%%%%

  \typeout{-------------------------- Section 10: Measure equivalence  -------------------------------}

  \section{Measure equivalence}\label{subsec:measure_equivalence}

   Gaboriau~\cite{Gaboriau(2002a)}
  introduced \emph{$L^2$-Betti numbers of measured equivalence relations}
  and proved that two measure equivalent countable groups
  have proportional $L^2$-Betti numbers.
  This notion turned out to have many important applications
  in recent years, most notably through the work of Popa~\cite{Popa(2007)}.

  The notion of \emph{measure equivalence} was introduced by
  Gromov~\cite[0.5.E]{Gromov(1993)}.

  \begin{definition}\label{def:measure_equivalence}
    Two countable groups $G$ and $H$ are called \emph{measure equivalent
      with index $c=I(G,H)>0$} if there exists a non-trivial standard
    measure space $(\Omega,\mu)$ on which $G\times H$ acts such that the
    restricted actions of $G=G\times\{1\}$ and $H=\{1\}\times H$ have
    measurable fundamental domains $X\subset\Omega$ and
    $Y\subset\Omega$, with $\mu(X)<\infty$, $\mu(Y)<\infty$, and
    $c=\mu(X)/\mu(Y)$. The space $(\Omega,\mu)$ is called a
    \emph{measure coupling} between $G$ and $H$ (of index $c$).
  \end{definition}

  The following conjecture  is taken from~\cite[Conjecture~1.2]{Lueck-Sauer-Wegner(2010)}.

  \begin{conjecture}\label{con:measure-equivalence_and_L2-torsion}
    Let $G$ and $H$ be two admissible groups, which  are measure
    equivalent with index $I(G,H)>0$. Then
    \[
    \rho^{(2)}(G)= I(G,H) \cdot \rho^{(2)}(H).
    \]
  \end{conjecture}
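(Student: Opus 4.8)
The plan is to transport the question to the setting of measured equivalence relations, following the template by which Gaboriau proved that measure equivalent groups have proportional $L^2$-Betti numbers, and to replace the $L^2$-Betti numbers by an $L^2$-torsion invariant of a \emph{finite} free resolution over the associated groupoid von Neumann algebra. First I would fix free probability-measure-preserving actions $G \curvearrowright (X,\mu_X)$ and $H \curvearrowright (Y,\mu_Y)$ with $\mu_X(X) = \mu_Y(Y) = 1$ (say Bernoulli actions), form the orbit equivalence relations $\calr_G$, $\calr_H$ and their von Neumann algebras $\caln(\calr_G) \cong L^\infty(X)\rtimes G$ and $\caln(\calr_H) \cong L^\infty(Y)\rtimes H$, each equipped with its canonical finite trace, hence with a dimension function and a Fuglede--Kadison determinant. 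Using a finite model for $BG$, the cellular $\IZ G$-chain complex of $EG$ becomes, after the base change along $\IZ G \hookrightarrow L^\infty(X)\rtimes G$, a finite chain complex of finitely generated free $\caln(\calr_G)$-modules; since the inclusion $\caln(G)\hookrightarrow\caln(\calr_G)$ is trace preserving, the spectral density functions of the differentials are unchanged, so this complex is again $\det$-$L^2$-acyclic and its $L^2$-torsion equals $\rho^{(2)}(G)$. Write $\rho^{(2)}(\calr_G)$ for this number, and likewise $\rho^{(2)}(\calr_H) = \rho^{(2)}(H)$.

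The second step is orbit equivalence invariance of $\rho^{(2)}(\calr_G)$. If the actions $G\curvearrowright X$ and $H\curvearrowright Y$ are orbit equivalent, the traced von Neumann algebras $\caln(\calr_G)$ and $\caln(\calr_H)$ are isomorphic, but the two finite free resolutions --- one built from $BG$, the other from $BH$ --- differ. One needs a measurable analogue of simple homotopy theory: both complexes are finite free resolutions of the trivial module $L^\infty(X)$ over the integral groupoid ring, hence chain homotopy equivalent, and the torsion of this equivalence lives in $K_1$ of that ring. To conclude that $\rho^{(2)}(\calr_G)$ depends only on the isomorphism type of $\calr_G$ one runs the usual argument showing that the Determinant Conjecture forces homotopy invariance of $\det$-$L^2$-torsion; the input it requires is a \emph{measure-theoretic determinant conjecture}, namely ${\det}_{\caln(\calr)}(r_A) \ge 1$ for every matrix $A$ over the integral groupoid ring of a p.m.p.\ equivalence relation $\calr$, together with the analogous self-map trick giving the reverse inequality. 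Granting this, $\rho^{(2)}(\calr_G)$ is a well-defined orbit equivalence invariant.

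The third step converts measure equivalence into orbit equivalence up to compression. By Furman's correspondence, a measure coupling of index $c = I(G,H)$ can be upgraded to a stable orbit equivalence: after possibly replacing $X,Y$ by other free p.m.p.\ actions as above, there are positive-measure sets $A \subseteq X$ and $B \subseteq Y$ and a measure-space isomorphism $A \xrightarrow{\cong} B$ carrying $\calr_G|_A$ onto $\calr_H|_B$, with $\mu_X(A)/\mu_Y(B) = I(G,H)$. Restriction of an equivalence relation to $A$ corresponds to passing to the corner $\chi_A\caln(\calr)\chi_A$ with the trace divided by $\mu(A)$, and --- exactly as for multiplicativity of $L^2$-torsion under finite coverings in Theorem~\ref{thm:main_properties_of_rho2(widetildeX)}~\eqref{list:main_properties_of_rho2(widetildeX):multiplicativity} --- restricting the resolution to $A$ and using that Fuglede--Kadison determinants on a corner are powers of the originals one gets $\rho^{(2)}(\calr_G|_A) = \mu_X(A)^{-1}\cdot\rho^{(2)}(\calr_G)$, and likewise for $H$. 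Combining the three steps, $\mu_X(A)^{-1}\,\rho^{(2)}(G) = \rho^{(2)}(\calr_G|_A) = \rho^{(2)}(\calr_H|_B) = \mu_Y(B)^{-1}\,\rho^{(2)}(H)$, so that $\rho^{(2)}(G) = I(G,H)\cdot\rho^{(2)}(H)$.

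I expect the main obstacle to be precisely the measure-theoretic determinant conjecture of the second step. Without it one cannot guarantee that the chain homotopy equivalence relating the two resolutions is simple, or even of determinant class, so $\rho^{(2)}(\calr_G)$ is not known to be independent of the finite model, and the entire scheme breaks down. This is exactly the point at which L\"uck--Sauer--Wegner must impose extra hypotheses: they obtain the conjecture under \emph{uniform} measure equivalence and for groups satisfying appropriate determinant estimates, so a proof of the conjecture as stated would essentially amount to removing the uniformity assumption from their determinant bounds. A secondary, more technical difficulty is developing enough of the $K$-theory and simple-homotopy formalism for measured groupoids to make Step~2 fully rigorous, but this seems conceptually routine by comparison.
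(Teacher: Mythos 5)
This statement is presented in the paper as an open \emph{conjecture}, taken from L\"uck--Sauer--Wegner, and the paper offers no proof of it; there is therefore no ``paper's own proof'' against which to compare your argument. What the paper does say is exactly what you converge on in your final paragraph: the only known evidence is a conditional result in L\"uck--Sauer--Wegner which assumes \emph{uniform} measure equivalence (a strictly stronger hypothesis than measure equivalence) together with the Measure Theoretic Determinant Conjecture for the relevant equivalence relations. Your Steps 1--3 recapitulate that strategy accurately, and your closing diagnosis of where it breaks is correct.

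That said, what you have written is not a proof, for reasons you partly acknowledge but which are worth stating sharply. Step 2 invokes the measure-theoretic determinant conjecture as if it were an available tool; it is an open conjecture, and without it $\rho^{(2)}(\calr_G)$ is not even known to be well defined independently of the chosen finite model for $BG$, so the invariant you want to transport does not yet exist. Step 3 has a second, independent gap: Furman's correspondence produces a stable orbit equivalence from measure equivalence, but the compression and determinant estimates in the L\"uck--Sauer--Wegner framework require the coupling to be \emph{uniform} (bounded), and it is not known how to remove that uniformity. So even granting the determinant conjecture you would obtain the statement only for uniformly measure equivalent groups, not for all measure equivalent ones as the conjecture demands. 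In short, your proposal is a correct account of the known partial strategy and its obstructions, but it establishes nothing beyond what the cited literature already makes conditional; presenting it as a proof overstates the case.
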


  Due to Gaboriau~\cite{Gaboriau(2002a)}, the vanishing of the $n$th $L^2$-Betti number $b_n^{(2)}(G)$
  is a invariant of the measure equivalence class of a countable group $G$. If all
  $L^2$-Betti numbers vanish and $G$ is an admissible group, then the vanishing of the
  $L^2$-torsion is a secondary invariant of the measure equivalence class of a countable
  group $G$ provided that Conjecture~\ref{con:measure-equivalence_and_L2-torsion}
  holds.

  Evidence for Conjecture~\ref{con:measure-equivalence_and_L2-torsion} comes
  from~\cite[Conjecture~1.10]{Lueck-Sauer-Wegner(2010)} which says that
  Conjecture~\ref{con:measure-equivalence_and_L2-torsion} is true if we replace measure
  equivalence by the stronger notion of uniform measure equivalence,
  see~\cite[Definition~1.3]{Lueck-Sauer-Wegner(2010)}, and assume  that $G$ satisfies the
  Measure Theoretic Determinant Conjecture, see~\cite[Conjecture~1.7]{Lueck-Sauer-Wegner(2010)}.

  %%%%%%%%%%%%%%%%%%%%%%%%%%%%%%%%%%%%%%%%%%%%%%%%%%%%%%%%%%%%%%%%%%%%%%%%%%%%%%%%%
  %%%%%%%%%%%%%%%%%  Section 11: Zero-in-the-spectrum-Conjecture   %%%%%%%%%%%%%%%%%%%%%%%%%%%%%
  %%%%%%%%%%%%%%%%%%%%%%%%%%%%%%%%%%%%%%%%%%%%%%%%%%%%%%%%%%%%%%%%%%%%%%%%%%%%%%%%%

  \typeout{--------------------- Section 11: Zero-in-the-spectrum-Conjecture -------------------------}

  \section{Zero-in-the-Spectrum-Conjecture}\label{sec:Zero-in-the-Spectrum-Conjecture}

  The next conjecture appears for the first time in
  Gromov's article~\cite[page~120]{Gromov(1986a)}.

  \begin{conjecture}[Zero-in-the-Spectrum Conjecture]%
  \label{con:Zero-in-the-Spectrum-Conjecture}
  Suppose that $\widetilde{M}$ is the universal covering of the
  aspherical closed Riemannian manifold $M$ (with the Riemannian metric
  coming from $M$). Then for some $p \ge 0$ zero is  in the
  spectrum of the minimal closure
  \[
  (\Delta_p)_{\min}\colon  \dom\left((\Delta_p)_{\min}\right) \subset
  L^2\Omega^p(\widetilde{M}) \to L^2\Omega^p(\widetilde{M})
  \]
  of the Laplacian acting on smooth $p$-forms on $\widetilde{M}$.
  \end{conjecture}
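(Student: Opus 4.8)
The plan is to argue by contradiction: suppose $M$ is an aspherical closed Riemannian manifold with $G = \pi_1(M)$ and that $0 \notin \sigma\bigl((\Delta_p)_{\min}\bigr)$ for \emph{every} $p \ge 0$. First I would translate this analytic hypothesis into the language of $L^2$-invariants. By $L^2$-Hodge--de Rham theory over $\caln(G)$, the absence of $0$ from the spectrum in degree $p$ is equivalent to the two conditions $b_p^{(2)}(\widetilde{M}) = 0$ and $\alpha_p(\widetilde{M}) = \infty^+$, where $\alpha_p$ is the Novikov--Shubin invariant, i.e. a genuine spectral gap above the necessarily trivial kernel. Passing to a smooth triangulation and invoking the comparison between analytic and combinatorial $L^2$-invariants, this is in turn equivalent to: every combinatorial Laplacian $\Delta_p^{\mathrm{c}}$ of the cellular $\IZ G$-chain complex $C_*(\widetilde{M})$ is invertible, since $b_p^{(2)}(\widetilde{M}) = 0$ forces $\ker(\Delta_p^{\mathrm{c}}) = 0$ by faithfulness of the trace and the gap then makes $\Delta_p^{\mathrm{c}}$ invertible inside $M_{n_p}(C^{\ast}_r(G))$ by continuous functional calculus. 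Since $M$ is aspherical, $C_*(\widetilde{M})$ is a finite free resolution of $\IZ$ over $\IZ G$, so the upshot is that the finite complex $C^{\ast}_r(G) \otimes_{\IZ G} C_*(\widetilde{M})$ of finitely generated free $C^{\ast}_r(G)$-modules is \emph{contractible}.

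Next I would derive a contradiction, and the step that goes through unconditionally is the amenable case. If $G$ is amenable, the trivial representation extends to a $\ast$-homomorphism $\varepsilon \colon C^{\ast}_r(G) \to \IC$ by Hulanicki's theorem, and applying $\IC \otimes_{C^{\ast}_r(G)} (-)$ along $\varepsilon$ to the contractible complex above produces a contractible complex of $\IC$-vector spaces. But that complex is $\IC \otimes_{\IZ G} C_*(\widetilde{M}) = C_*(M;\IC)$, whose homology $H_0(M;\IC) = \IC$ is non-zero because $M$ is non-empty and connected --- a contradiction. Hence the Zero-in-the-Spectrum Conjecture holds whenever $M$ is aspherical with amenable fundamental group; the abelian case is the baby instance, detected by evaluating at the trivial character of $\widehat{G}$. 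For general $\pi_1(M)$ the plan is to replace ``evaluation at the trivial representation'' by a more robust certificate that $C^{\ast}_r(G) \otimes_{\IZ G} C_*(\widetilde{M})$ is not contractible --- for instance through the Baum--Connes assembly map $\mu \colon K_*(BG) \to K_*(C^{\ast}_r(G))$, rationally injective for very large classes of groups, applied to a suitable secondary invariant, or via a coarse/boundary Fredholm module when $G$ is hyperbolic or has finite asymptotic dimension.

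The hard part, and the reason the conjecture is still open, is exactly this last certification. The crude $K$-theory class of $C^{\ast}_r(G) \otimes_{\IZ G} C_*(\widetilde{M})$ equals $\chi(M)\cdot[1] \in K_0(C^{\ast}_r(G))$, which vanishes automatically because $\chi(M) = \sum_p (-1)^p b_p^{(2)}(\widetilde{M}) = 0$ under our hypothesis; so the usual index-theoretic obstructions see nothing, and the genuine content --- the complex being \emph{chain contractible} over $C^{\ast}_r(G)$, not merely of Euler-characteristic class zero --- has no known extraction into $K$-theory for non-amenable $G$. This is why all positive results past the amenable case ($\dim M \le 2$; $\dim M = 3$ via geometrization; locally symmetric spaces; nonpositively curved manifolds; $\pi_1(M)$ containing an infinite amenable normal subgroup) rely on extra geometric input that effectively decides which of $b_p^{(2)}(\widetilde{M})$ or $\alpha_p(\widetilde{M})$ fails to be extremal. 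A reasonable intermediate target would be $\pi_1(M)$ of finite asymptotic dimension, or a deduction from the Singer Conjecture~\ref{con:Singer_Conjecture} supplemented by a bound on some Novikov--Shubin invariant near the middle degree.
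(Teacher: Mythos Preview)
This statement is a \emph{conjecture}, not a theorem: the paper does not prove it, but merely records it as an open problem and lists the special cases in which it is known (low dimensions, locally symmetric spaces, nonpositive curvature, K\"ahler hyperbolic, and $\pi_1(M)$ satisfying the strong Novikov Conjecture), deferring to~\cite{Farber-Weinberger(2001), Gromov(1986a), Lott(1996b)} and~\cite[Chapter~12]{Lueck(2002)} for details. There is therefore no ``paper's own proof'' to compare your proposal against.

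That said, your proposal is internally coherent and you correctly do not claim a full proof. The reduction to contractibility of $C^{\ast}_r(G) \otimes_{\IZ G} C_*(\widetilde{M})$ and the argument in the amenable case via the trivial representation $C^{\ast}_r(G) \to \IC$ are essentially Lott's argument~\cite{Lott(1996b)}. Two remarks. First, your amenable argument nowhere actually uses asphericity --- the contradiction comes from $H_0(M;\IC) \neq 0$, which holds for any non-empty $M$ --- and indeed the conjecture is known for amenable $\pi_1(M)$ without the asphericity hypothesis; the Farber--Weinberger counterexamples to the non-aspherical version have genuinely non-amenable fundamental groups. Second, your suggested route via rational injectivity of the Baum--Connes assembly map is precisely item~(5) in the paper's list (the strong Novikov Conjecture implies the Zero-in-the-Spectrum Conjecture), so that instinct is correct but already accounted for in the literature the paper cites; it does not constitute a new line of attack.
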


  The Zero-in-the-Spectrum Conjecture~\ref{con:Zero-in-the-Spectrum-Conjecture}
  is known to be true if one of the following conditions is satisfied:
  \begin{enumerate}
  \item $\dim(M) \le 3$;
  \item M is a locally symmetric space;
  \item $M$ possesses  a Riemannian metric whose sectional curvature
   is non-positive;
   \item  $M$ is an aspherical closed K\"ahler manifold whose fundamental group is
   word-hyperbolic in the sense of~\cite{Gromov(1987)};
  \item $\pi_1(M)$ satisfies the strong Novikov Conjecture.
  \end{enumerate}

  If one drops the conditions ``aspherical'' in the Zero-in-the-Spectrum
  Conjecture~\ref{con:Zero-in-the-Spectrum-Conjecture}, then there are counterexamples.

  For the  proofs of the claim above and the relevant reference, such
  as~\cite{Farber-Weinberger(2001), Gromov(1986a), Lott(1996b)}, and for more information
  about the Zero-in-the-Spectrum Conjecture~\ref{con:Zero-in-the-Spectrum-Conjecture}, we refer
  to~\cite[Chapter~12]{Lueck(2002)}.

  %%%%%%%%%%%%%%%%%%%%%%%%%%%%%%%%%%%%%%%%%%%%%%%%%%%%%%%%%%%%%%%%%%%%%%%%%%%%%%%%%
  %%%%%%%%%%%%%%%%%%%%%%%  Section 12: Twisting with finite-dimensional representations  %%%%%%%%%%%%%%
  %%%%%%%%%%%%%%%%%%%%%%%%%%%%%%%%%%%%%%%%%%%%%%%%%%%%%%%%%%%%%%%%%%%%%%%%%%%%%%%%%

  \typeout{--------------------- Section 12: Twisting with finite-dimensional representations  ------------}

  \section{Twisting with finite-dimensional representations}\label{sec:Twisting_with_finite-dimensional_representations}
  A prominent open problem  is whether one can twist $L^2$-invariants with (not necessarily unitary)
  finite-dimensional representations. A basic and systematical treatment of this problem can be found
  in~\cite{Lueck(2018)}.

  For $L^2$-Betti numbers there is the conjecture that this just boils down to multiplying
  the untwisted $L^2$-Betti number with the dimension of the representation,
  see~\cite[Conjecture~2]{Boschheidgen-Jaikin-Zapirain(2022)}
  and~\cite[Question~0.1]{Lueck(2018)}. This conjecture is proved for sofic groups
  by~Boschheidgen-Jaikin-Zapirian\cite[Theorem~1.1]{Boschheidgen-Jaikin-Zapirain(2022)} and
  for locally indicable groups by Kielak-Sun~\cite[Theorem~4.5]{Kielak-Sun(2021)}.

  For $L^2$-torsion the effect of the twisting is much more interesting and this leads to
  new invariants.  Especially in dimension~$3$ there has been made a lot of progress, and
  interesting open problems occur. For instance, there are interesting relations between the
  Alexander-torsion function, which is given by twisting the $L^2$-torsion with a family of
  one-dimensional representations associated to an element in $H^1(M;\IZ)$, and the Thurston
  polytope.  A prominent open problem is whether the regulare Fuglede-Kadison determinant is
  continuous, see~\cite[Question~9.11]{Lueck(2018)}.  We refer for more information and the
  relevant references, such as~\cite{Dubois-Friedl-Lueck(2016), Friedl-Lueck(2017universal),
    Friedl-Lueck(2019Euler), Friedl-Lueck(2019Thurston), Funke-Kielak(2018), Kielak(2020),
    Linnell-Lueck(2018), Liu(2017)}, to~\cite[Section~10]{Lueck(2018)} and the survey
  article~\cite{Lueck(2021survey)}.

  %%%%%%%%%%%%%%%%%%%%%%%%%%%%%%%%%%%%%%%%%%%%%%%%%%%%%%%%%%%%%%%%%%%%%%%%%%%%%%%%%
  %%%%%%%%%%%%%%%%%%  Section 13: Group von Neumann group algebras over $\IF_p$  %%%%%%%%%%%%%%%%%%%%%
  %%%%%%%%%%%%%%%%%%%%%%%%%%%%%%%%%%%%%%%%%%%%%%%%%%%%%%%%%%%%%%%%%%%%%%%%%%%%%%%%%

  \typeout{------------------- Section 13:  Von Neumann group algebras over $\IF_p$  -------------------}

  \section{Group von Neumann algebras over $\IF_p$ }\label{sec:Group_von_Neumann_group_algebras_over_IF_p}

  Some of the problems discussed above are of the shape that we understand the case of a
  field of characteristic zero well using the group von Neumann algebra $\caln(G)$, but do
  not know what happens in prime characteristic $p$. A prominent example are the questions
  about approximation of $L^2$-Betti numbers in prime characteristic, see
  Subsection~\ref{subsec:Approximation_of_L2_-Betti_numbers_in_prime_characteristic}.  It
  seems to be conceivable that the relevant limits exists and are independent of the chains,
  but the value of the limits depend on whether we work in characteristic zero or in prime
  characteristic. So the original hope that one always gets as a limit the $L^2$-invariants
  defined in terms of the von Neumann algebra also in prime characteristic turns out not to
  be fullfilled. So we face the new problem what the limit could be in the prime
  characteristic case.

  This raises the question whether there is an $\IF_p$-analogue of the group von Neumann
  algebra? In order to treat $L^2$-Betti numbers in prime characteristic, one would hope for the existence of a
  $\IF_p$-algebra $\caln(G;p)$ which contains the group ring $\IF_p G$ and comes with a
  dimension function for finitely generated projective $\caln(G;p)$-modules
  satisfying~\cite[Assumption~6.2 on page~238]{Lueck(2002)}. Then~\cite[Definition~6.6 and
  Theorem~6.7 on page 239]{Lueck(2002)} would apply, and one would get a dimension function
  for all $\caln(G;p)$-modules which has many useful features. Finally, one would define the
  $n$th $L^2$-Betti number in characteristic $p$ of a $G$-space $X$ to be
  \begin{equation}
    b_n^{(2)}(Y;\caln(G;p)) := \dim_{\caln(G;p)}\bigl(H_n(\caln(G;p) \otimes_{\IF_p G} C^s_*(Y;\IF_p))\bigr)
    \in \IR^{\ge 0} \amalg \{\infty\},
    \label{b_n_upper_(2)(X,caln(G;p))}
  \end{equation}
  where $C^s_*(Y;\IF_p)$ is the $\IF_pG$-chain complex given by the singular chain complex
  with coefficient in $\IF_p$.  For a group $G$ one would define its $n$th $L^2$-Betti
  number in characteristic $p$
  \begin{equation}
    b_n^{(2)}(G;p)  =    b_n^{(2)}(EG;\caln(G;p)).
    \label{b_n_upper_(2)(G,p)}
  \end{equation}
  The hope is that then the corresponding sequences of normalized Betti numbers with
  coefficients in $\IF_p$ converge for any normal chain to the $L^2$-Betti numbers with
  coefficients in $\IF_p$, see also Remark~\ref{rem:embedding_group_ring_skew_field}.

%%%%%%%%%%%%%%%%%%%%%%%%%%%%%%%%%%%%%%%%%%%%%%%%%%%%%%%%%%%%%%%%%%%%%%%%%%%%%%%%%
%%%%%%%%%%%%%%%%%%  Section 14: $L^2$-invariants and condensed mathematics  %%%%%%%%%%%%%%%%%%%%%
%%%%%%%%%%%%%%%%%%%%%%%%%%%%%%%%%%%%%%%%%%%%%%%%%%%%%%%%%%%%%%%%%%%%%%%%%%%%%%%%%

\typeout{------------------- Section 14: $L^2$-invariants and condensed mathematics -------------------}

\section{$L^2$-invariants and condensed mathematics}\label{sec:L2-invariants_and_condensed_mathematics}

Condensed mathematics, a theory recently developed by Clausen-Scholze~\cite{Scholze-Clausen(2019condensed), Scholze-Clausen(2019analyticgeometry), Scholze-Clausen(2022condensedcomplex)}, is a framework which aims to remedy ill behaviour of the category of topological spaces with respect to algebraic structure, for example the fact that the category of topological abelian groups is not an abelian category. It has proven useful in incorporating geometric and analytic structures that appear in arithmetic, or complex geometry and we are far from understanding its full capabilities.
In the context of $L^2$-invariants, condensed mathematics poses at least two interesting questions.
\begin{enumerate}
	\item The theory of $L^2$-invariants heavily relies on techniques mixing algebra and topology or functional analysis.
  Can condensed mathematics help to extend the current formalism of $L^2$-invariants in a way which sheds light on open problems in the area?
  For example, are there analogues of the group von Neumann algebra of a discrete group over $\IF_p$?
	\item Can $L^2$-invariants be applied to a wider class of geometric problems coming from condensed mathematics?
\end{enumerate}
In this section, we attempt to make a first step towards the second question by defining $L^2$-Betti numbers of condensed sets carrying an action of a discrete group in three different ways that extend the current definition for nicely behaved spaces, e.g., CW-complexes:
\begin{enumerate}
  \item homology through solidification;
  \item condensed cohomology;
  \item condensed singular homology.
\end{enumerate}
We would hope that more sophisticated setups to associate $L^2$-Betti numbers to objects in analytic geometry lead to new interesting invariants.
The reader should be aware that we are freely using the language of $\infty$-categories.

Let us begin by giving a brief summary of the main definitions from condensed mathematics necessary for this purpose.
For more details and proofs, we refer the reader to Peter Scholze's lecture notes~\cite{Scholze-Clausen(2019analyticgeometry), Scholze-Clausen(2019condensed), Scholze-Clausen(2022condensedcomplex)}.
We also ignore set theoretic size issues, which are treated with more care in aforementioned references.

Denote by $\edCH$ the category of extremally disconnected compact Hausdorff spaces (extremally disconnected meaning that the closure of any of its open subsets is again open) and continuous maps.
It can be made into a site with coverings given by finite families of jointly surjective maps.
For $\cat{C}$ a category with finite limits, the category $\Cond{\cat{C}}$ of condensed objects in $\cat{C}$ is defined as the category of sheaves on $\edCH$ with values in $\cat{C}$.
It is not hard to check that this identifies with the
full subcategory of $\Fun(\edCH^{\op}, \cat{C})$ of finite product preserving functors.
Explicitly, a condensed object in $\cat{C}$ is a functor
\begin{equation*}
  X \colon \edCH^{\op} \to \cat{C}
\end{equation*}
which satisfies
\begin{enumerate}
\item\label{def:condensed_cald-object:emptyset}
  $X(\emptyset)$ is the terminal object in $\cat{C}$;

\item\label{def:condensed_cald-object:amalg}
  For any two objects $T_1$ and $T_2$ in $\edCH$ the natural map
 \[X(T_1 \amalg T_2) \to X(T_1) \times X(T_2)
 \]
 is an isomorphism.
\end{enumerate}

The category $\Cond{\Set}$ of condensed sets is an enlargement of the category of compactly generated topological spaces in the following sense:
\begin{proposition}[{\cite[Proposition 1.7]{Scholze-Clausen(2019condensed)}}]\label{prop:embedding_top_into_cond}
  The functor $\Top \to \Cond{\Set}, \linebreak X \mapsto \underline X$ given by the restricted Yoneda embedding admits a left adjoint and is fully faithful when restricted to compactly generated topological spaces.
\end{proposition}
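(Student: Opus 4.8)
The plan is to follow Scholze's argument. Write $\underline X(S) = \operatorname{Cont}(S,X)$ for the value of the restricted Yoneda embedding on $S \in \edCH$. First I would check that $\underline X$ really is a condensed set. The finite-product conditions~\eqref{def:condensed_cald-object:emptyset} and~\eqref{def:condensed_cald-object:amalg} are immediate, and, via the product condition, descent along the coverings of $\edCH$ reduces to descent along a single surjection $S' \twoheadrightarrow S$ of extremally disconnected spaces. Such a surjection splits, since $S$ is projective in the category of compact Hausdorff spaces; a short diagram chase with a chosen splitting then exhibits $\operatorname{Cont}(S,X)$ as the equalizer of $\operatorname{Cont}(S',X) \rightrightarrows \operatorname{Cont}(S'\times_S S', X)$, so $\underline X$ is a sheaf on $\edCH$.

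Next I would construct the left adjoint $L\colon \Cond{\Set}\to\Top$. Given a condensed set $T$, equip the set $T(\ast)$ with the quotient topology, i.e.\ the finest topology for which every map $\overline t\colon S\to T(\ast)$, $s\mapsto s^\ast t$, is continuous, where $S$ ranges over $\edCH$ and $t\in T(S)$; call the result $L(T)$. (Here one suppresses the set-theoretic point that a small family of test objects $S$ already suffices.) To produce the adjunction bijection $\Hom_{\Top}(L(T),X)\cong\Hom_{\Cond{\Set}}(T,\underline X)$ I would argue as follows: naturality of a transformation $\phi\colon T\to\underline X$ against the point inclusions $\ast\to S$ forces $\phi_S(t)=\phi_\ast\circ\overline t$ for every $t\in T(S)$, so $\phi$ is completely determined by the function $\phi_\ast\colon T(\ast)\to X$; and the only constraint on $\phi_\ast$ is that each $\phi_\ast\circ\overline t$ be a continuous map $S\to X$, which by construction of the quotient topology is exactly the assertion that $\phi_\ast\colon L(T)\to X$ is continuous. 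Functoriality of $T$ makes the identification natural in $T$ and $X$, giving $L\dashv\underline{(-)}$.

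For full faithfulness on compactly generated spaces, faithfulness is trivial: $\underline{(-)}$ is the identity on underlying sets after evaluating at $\ast$, hence injective on morphisms. For fullness, let $\phi\colon\underline X\to\underline Y$ with $X$ compactly generated. The same naturality argument against point inclusions shows $\phi_S(t)=\phi_\ast\circ t$ for every continuous $t\colon S\to X$ with $S\in\edCH$, so $\phi=\underline{\phi_\ast}$ as soon as $\phi_\ast\colon X\to Y$ is known to be continuous; moreover $\phi_\ast\circ t=\phi_S(t)$ is then continuous for every such $t$. Now invoke Gleason's theorem that every compact Hausdorff space $K$ is the image of a continuous surjection $\pi\colon S\twoheadrightarrow K$ with $S$ extremally disconnected, which is automatically a quotient map. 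Pulling an arbitrary continuous $t'\colon K\to X$ back along $\pi$ and applying the previous remark to $t'\circ\pi$ shows $(\phi_\ast\circ t')\circ\pi$ is continuous, hence so is $\phi_\ast\circ t'$; as $X$ carries the final topology with respect to continuous maps from compact Hausdorff spaces, $\phi_\ast$ is continuous, which finishes the argument.

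The step I expect to be the real heart of the matter --- as opposed to routine diagram chasing --- is the passage from ``tested against extremally disconnected spaces'', which is all the information a condensed set retains, to ``tested against all compact Hausdorff spaces'', which is what compact generation of the target involves. This rests entirely on Gleason's theorem identifying the extremally disconnected compact Hausdorff spaces with the projective objects of $\mathsf{CompHaus}$, equivalently on the existence of projective covers $S\twoheadrightarrow K$. The other genuinely technical but (by the conventions of this section) suppressed point is the set-theoretic bookkeeping needed to make $L(T)$ an honest small topological space rather than one indexed by a proper class of test objects.
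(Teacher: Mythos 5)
Your argument is correct and is essentially the one in the cited source (Scholze--Clausen, Condensed Mathematics, Proposition~1.7): the left adjoint is $T\mapsto T(\ast)$ with the final topology induced by the maps $\overline{t}\colon S\to T(\ast)$, the adjunction bijection comes from naturality against point inclusions $\ast\to S$, and fullness on compactly generated spaces rests on Gleason's theorem together with the fact that a continuous surjection of compact Hausdorff spaces is a quotient map. One small remark on the sheaf-condition step: $S'\times_S S'$ need not lie in $\edCH$, so the equalizer diagram you write is not literally the sheaf condition on this site; the cleanest phrasing is either to invoke the equivalence recorded in the paper (sheaves on $\edCH$ are exactly the finite-product-preserving presheaves, so nothing beyond your first two bullet points needs checking), or to observe that once the surjection $S'\twoheadrightarrow S$ splits, the covering sieve it generates contains the identity of $S$ and the descent condition becomes vacuous.
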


The nicely behaved enlargement of topological abelian groups is now given by the category $\Cond{\Ab}$ of condensed abelian groups.
It is a complete and cocomplete abelian category additionally satisfying Grothendieck's axioms (AB5) and (AB6).
We now summarise some additional structure:
\begin{enumerate}
  \item The forgetful functor $\Cond{\Ab} \to \Cond{\Set}$ admits a left adjoint \linebreak $X \mapsto \IZ[X]$.
  \item The category $\Cond{\Ab}$ admits a closed symmetric monoidal structure with $M \otimes N$ given by the sheafification of the presheaf $T \mapsto M(T) \otimes_{\IZ} N(T)$.
  \item There is an equivalence $D(\Cond{\Ab}) \simeq \Cond{D(\IZ)}$ between the derived $\infty$-category of condensed abelian groups and condensed objects in the derived $\infty$-category of $\IZ$-modules.
  The category $\Cond{D(\IZ)}$ is a stable presentably symmetric monoidal $\infty$-category with symmetric monoidal structure induced from the one on $D(\IZ)$.
  It can also be identified with the left derived tensor product on condensed abelian groups.
\end{enumerate}

We now want to outline approach (1) homology through solidification to define $L^2$-Betti numbers of condensed sets.
First we recall some background on solid abelian groups from~\cite[Lecture 5 and 6]{Scholze-Clausen(2019condensed)}.
\begin{definition}
  For a profinite set $T$ define
  \begin{equation*}
    \IZ[T]^\blacksquare = \underline{\Hom}_{\Cond{\Ab}}(\underline{C(T, \IZ)}, \IZ)
  \end{equation*}
  where $C(T, \IZ)$ denotes the space of continuous maps endowed with the compact open topology.
  It comes with a canonical map $\IZ[T] \to \IZ[T]^\blacksquare$.
  Equivalently, if $T = \lim_i T_i$ is given as the cofiltered limit of finite sets, one can identify \linebreak
  $\IZ[T]^\blacksquare \simeq \lim_i \IZ[T_i]$ where the limit is formed in $\Cond{\Ab}$.

  A condensed abelian group $M$ is then called \emph{solid} if for all profinite sets $T$ the induced map
  \begin{equation*}
    \Hom_{\Cond{\Ab}}(\IZ[T]^\blacksquare, M) \to \Hom_{\Cond{\Ab}}(\IZ[T], M)
  \end{equation*}
  is an equivalence.
  Denote by $\Solid{\Ab} \subset \Cond{\Ab}$ the full subcategory of solid abelian groups.

  Similarly, a derived condensed abelian group $C \in D(\Cond{\Ab}) \simeq \Cond{D({\IZ})}$ is called \emph{solid} if for all profinite sets $T$ the induced map
  \begin{equation*}
    \Hom_{D(\Cond{\Ab})}(\IZ[T]^\blacksquare, C) \to \Hom_{D(\Cond{\Ab})}(\IZ[T], C)
  \end{equation*}
  is an equivalence in $D(\IZ)$.
  Denote by $\Solid{D(\IZ)} \subseteq \Cond{D(\IZ)}$ the full subcategory of solid derived $\IZ$-modules.
\end{definition}
The following theorem is the main result from~\cite[Lecture 5]{Scholze-Clausen(2019condensed)}
\begin{theorem}
  The full inclusion $\Solid{D(\IZ)} \subseteq  \Cond{D(\IZ)}$ admits a left adjoint $(-)^{L\blacksquare}$ called the solidification.
  Furthermore, a condensed derived $\IZ$-module is solid if and only if all of its homology groups are solid abelian groups.
\end{theorem}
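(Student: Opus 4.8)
The plan is to reduce both assertions to the abelian counterpart established beforehand for $\Solid{\Ab} \subseteq \Cond{\Ab}$ — namely that $\Solid{\Ab}$ is a Grothendieck abelian category, reflective in $\Cond{\Ab}$, with a set of compact projective generators given by the $\IZ[T]^\blacksquare$ for $T$ extremally disconnected, and that the inclusion is exact. Granting this input, the passage to derived categories is essentially formal; I isolate below the one genuinely hard step.

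\emph{Existence of the solidification.} By definition an object $C \in \Cond{D(\IZ)} = D(\Cond{\Ab})$ is solid exactly when it is local with respect to the set $S$ of maps $\IZ[T] \to \IZ[T]^\blacksquare$, where $T$ ranges over a skeleton of profinite sets (so that $S$ is small in the sense of the excerpt's size conventions), viewed as maps of complexes concentrated in degree $0$: the requirement that $\Hom_{D(\Cond{\Ab})}(\IZ[T]^\blacksquare, C) \to \Hom_{D(\Cond{\Ab})}(\IZ[T], C)$ be an equivalence in $D(\IZ)$ is precisely the statement that these mapping spectra agree. Since $\Cond{D(\IZ)}$ is a presentable stable $\infty$-category (as recorded above), the general existence of reflective Bousfield localizations of a presentable $\infty$-category at a small set of morphisms produces a left adjoint to the inclusion of the $S$-local objects, and this left adjoint is $(-)^{L\blacksquare}$. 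Its compatibility with the abelian reflector $(-)^\blacksquare$ on hearts then follows once the second assertion is known.

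\emph{Homology characterization.} Write $D_\blacksquare = \Solid{D(\IZ)}$. First, $D_\blacksquare$ is a stable subcategory closed under all limits: closure under limits is automatic since it is reflective, while closure under shifts and cofiber sequences holds because each functor $R\Hom_{\Cond{\Ab}}(X,-)$ is exact, so the class of $C$ for which $R\Hom_{\Cond{\Ab}}(\IZ[T],C) \to R\Hom_{\Cond{\Ab}}(\IZ[T]^\blacksquare,C)$ is an equivalence satisfies two-out-of-three in triangles. The crucial \textbf{base lemma} is that a solid abelian group $M$, placed in degree $0$, lies in $D_\blacksquare$; by the abelian theory it suffices to test locality on extremally disconnected $T$, and since $\IZ[T]$ is then projective in $\Cond{\Ab}$ this reduces to $\Ext^i_{\Cond{\Ab}}(\IZ[T]^\blacksquare, M) = 0$ for all $i \ge 1$ and all solid $M$, the case $i=0$ being the definition of solidity of $M$. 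Granting the base lemma, the implication from ``all $H_n(C)$ solid'' to ``$C$ solid'' follows by dévissage: when the homology of $C$ lies in a bounded range one inducts on the amplitude via the cofiber sequences $H_k(C)[k] \to \tau_{[k,n]}C \to \tau_{[k+1,n]}C$ together with closure under shifts and cofibers; in general one writes $C \simeq \lim_n \tau_{\le n}C$ and $\tau_{\le n}C \simeq \lim_k \tau_{[k,n]}C$ — the standard $t$-structure on $D(\Cond{\Ab})$ being left complete, which one sees by testing against the compact projective generators $\IZ[T]$ and reducing to the left-completeness of $D(\IZ)$ — and applies closure under limits. For the converse one identifies $D_\blacksquare$ with the essential image of the derived functor $D(\Solid{\Ab}) \to D(\Cond{\Ab})$: this functor is fully faithful because, the $\IZ[T]^\blacksquare$ being projective generators of $\Solid{\Ab}$, the comparison of $\Ext$-groups between them over $\Solid{\Ab}$ versus over $\Cond{\Ab}$ again reduces to the vanishing above; its essential image consists exactly of the objects with solid homology, since $\Solid{\Ab} \subseteq \Cond{\Ab}$ is an abelian subcategory closed under kernels, cokernels and extensions; and this image is $S$-local by the base lemma applied degreewise, hence equals $D_\blacksquare$. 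A solid $C$ is therefore quasi-isomorphic to a complex of solid abelian groups, whose homology, formed in $\Solid{\Ab}$, is solid.

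\emph{The main obstacle.} Apart from the base lemma everything above is formal given the abelian theory, so the essential difficulty is the $\Ext$-vanishing $\Ext^{>0}_{\Cond{\Ab}}(\IZ[T]^\blacksquare, M) = 0$ for solid $M$ — equivalently, full faithfulness of $D(\Solid{\Ab}) \to D(\Cond{\Ab})$. I would attack it by resolving $M$ and $\IZ[T]^\blacksquare$ to reduce to $M = \IZ$ and to $T$ a countable product such as $\prod_{\IN}\IZ/2$, analysing $\IZ[\IN\cup\{\infty\}]^\blacksquare$ and its relation to $\prod_{\IN}\IZ$ and $\bigoplus_{\IN}\IZ$, and feeding in Nöbeling's theorem that $C(T,\IZ)$ is a free abelian group for profinite $T$ together with the vanishing of higher derived limits of the defining cofiltered system $\IZ[T]^\blacksquare = \lim_i \IZ[T_i]$. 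This homological computation is where I expect essentially all of the work to lie.
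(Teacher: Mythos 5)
The paper does not prove this statement: it is quoted verbatim as the main theorem of \cite[Lecture 5]{Scholze-Clausen(2019condensed)}, and the paper immediately moves on. So there is no internal argument to compare against, and your proposal has to be judged on its own terms against the known Scholze--Clausen proof.

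Your architecture is sound, and you correctly isolate the genuine technical core: the vanishing $\Ext^{>0}_{\Cond{\Ab}}(\IZ[T]^\blacksquare, M) = 0$ for $M$ solid, equivalently full faithfulness of $D(\Solid{\Ab}) \to D(\Cond{\Ab})$. In Scholze--Clausen this is exactly the engine driving the result, proved via the computation of $R\Hom(\IZ[S'],\IZ[S]^\blacksquare)$ for profinite $S,S'$, with N\"obeling's freeness theorem as the crucial input --- precisely the ingredients you name. The Bousfield-localization existence argument and the identification of the $S$-local objects as the solid ones are both fine, modulo the size conventions which you flag.

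There is however a genuine gap in the d\'evissage for unbounded objects. You invoke only closure under limits, writing both reduction steps as limits, but a complex that is unbounded below is \emph{not} a limit of bounded ones: left-completeness gives $C \simeq \lim_{n\to\infty}\tau_{\le n}C$, and each $\tau_{\le n}C$ is bounded above but can still be unbounded below, so to reach the bounded case you need the Whitehead colimit $\tau_{\le n}C \simeq \colim_{k\to -\infty}\tau_{[k,n]}C$, not a limit. To use this you must show $\Solid{D(\IZ)}$ is closed under filtered colimits, and that is not automatic for a reflective subcategory. The usual way to get it is to show both $\IZ[T]$ and $\IZ[T]^\blacksquare$ are compact in $D(\Cond{\Ab})$ so that $R\Hom$ out of them commutes with filtered colimits; compactness of $\IZ[T]$ is immediate, but compactness of $\IZ[T]^\blacksquare \cong \prod_J \IZ$ is itself one of the nontrivial conclusions of the Scholze--Clausen analysis, not an input you can take for granted. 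So this step needs to be supplied, or the d\'evissage reorganized. And, as you yourself say, the base $\Ext$-vanishing is where essentially all the work lies and you have only indicated a plausible line of attack rather than carried it out.
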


Now we come to the definition of $L^2$-Betti numbers of condensed sets.
Let $X$ be a condensed set with $G$-action, i.e., an element in $\Cond{\Set^{BG}} \simeq \Cond{\Set}^{BG}$.
Consider the following composition
\begin{align}\label{eq:condensed_chain_complex}
  & \Cond{\Set}^{BG} \xrightarrow{\IZ[-]} \Cond{D(\IZ)}^{BG} \xrightarrow{(-)^{L\blacksquare}}  \Cond{D(\IZ)}^{BG} \\
  \simeq \ & \Cond{D(\IZ)^{BG}} \simeq \Cond{D(\IZ[G])} \nonumber
\end{align}
and denote the image of $X$ under~\eqref{eq:condensed_chain_complex} by $\IZ[X]^{L\blacksquare}$.
\begin{definition}\label{def:equivariant_condensed_homology}
Let $R$ be a $\IZ[G]$-algebra and $X$ a condensed set with $G$-action.
We define solid equivariant homology of $X$ with coefficients in $R$ by
\begin{equation}
  H_n^{G, \blacksquare}(X; R) \coloneqq \Gamma(H_n(\IZ[X]^{L\blacksquare} \otimes_{\IZ[G]} R)).
\end{equation}
Here, $\Gamma \colon \Cond{\Mod_{R}} \to \Mod_{R}$ is the global sections functor and
$- \otimes_{\IZ[G]} R \colon D(\IZ[G]) \linebreak \to D(R)$ denotes the (derived) base change.

Furthermore, we define the $L^2$-Betti numbers of $X$ by
\begin{equation}\label{def:L2_betti_numbers_condensed_anima}
  b^{(2)}_n(X; \caln(G)) \coloneqq \dim_{\caln(G)}(H_n^{G, \blacksquare}(X; \caln(G))).
\end{equation}
\end{definition}

For free $G$-CW-complexes, we now compare equivariant homology and $L^2$-Betti numbers from Definition~\ref{def:equivariant_condensed_homology} with the classical ones, keeping Proposition~\ref{prop:embedding_top_into_cond} in mind.
For this we need one further ingredient, giving a description of the singular chain complex of a CW-complex in the condensed world.
\begin{proposition}[{\cite[Example 6.5]{Scholze-Clausen(2019condensed)}}]\label{prop:condensed_singular_homology}
  There is an equivalence of the two functors
  \begin{align*}
    & \CW \xrightarrow{\underline{(-)}} \Cond{\Set} \xrightarrow{\IZ[-]} \Cond{D(\IZ)} \xrightarrow{(-)^{L\blacksquare}} \Cond{D(\IZ)} \ \text{and} \\
    & %\CW \xrightarrow{\Sigma_+^\infty} \Sp \xrightarrow{} D(\IZ) \xrightarrow{- \otimes_{\IS} \IZ} \Cond{D(\IZ)}
    \CW \xrightarrow{C_\bullet(-)} D(\IZ) \xrightarrow{\underline{(-)}} \Cond{D(\IZ)},
  \end{align*}
  where $\CW \subseteq \Top$ denotes the full subcategory of CW-complexes, $C_\bullet(X)$ denotes the singular chain complex of $X$, and the functor $\underline{(-)} \colon D(\IZ) \to \Cond{D(\IZ)}$ sends an object $M$ to the sheafification of the constant presheaf with value $M$.
\end{proposition}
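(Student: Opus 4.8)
The plan is to promote both composites to colimit-preserving functors on the $\infty$-category $\An$ of anima and then invoke the universal property of $\An$ as the free cocompletion of a point. Write $F,F'\colon\CW\to\Cond{D(\IZ)}$ for the two functors, so $F(X)=\IZ[\underline X]^{L\blacksquare}$ and $F'(X)=\underline{C_\bullet(X)}$; the only non-formal ingredient will be one computation in the solid theory, described at the end. First I would establish homotopy invariance of $F$. Since the restricted Yoneda embedding preserves products, $\underline{X\times[0,1]}\simeq\underline X\times\underline{[0,1]}$, and since $X\mapsto\IZ[X]$ and the solidification are symmetric monoidal, $F(X\times[0,1])\simeq\bigl(\IZ[\underline X]\otimes\IZ[\underline{[0,1]}]\bigr)^{L\blacksquare}$. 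The crucial input is that $F([0,1])=\IZ[\underline{[0,1]}]^{L\blacksquare}\simeq\IZ$ is the monoidal unit; this is the computation carried out in \cite{Scholze-Clausen(2019condensed)}, via presenting $[0,1]$ as a quotient of the Cantor set $\{0,1\}^{\IN}$, writing $\IZ[\underline{[0,1]}]^{L\blacksquare}$ as the geometric realisation of the associated \v{C}ech nerve, and evaluating the resulting complex of free solid modules $\IZ[\{0,1\}^n]^{\blacksquare}=\lim_k\IZ[\{0,1\}^k]$. Granting it, the projection induces an equivalence $F(\mathrm{pr})\colon F(X\times[0,1])\xrightarrow{\ \sim\ }F(X)$, so the two inclusions $X\rightrightarrows X\times[0,1]$ become equivalent under $F$ (each is the essentially unique one-sided inverse of $F(\mathrm{pr})$); hence $F$ sends homotopic maps to equivalent maps, and therefore homotopy equivalences to equivalences. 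For $F'$ this is the classical homotopy invariance of singular chains.

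Next I would check that both functors preserve disjoint unions and homotopy pushouts of CW complexes. On the one side $\IZ[-]$ is a left adjoint, $(-)^{L\blacksquare}$ is a left adjoint by the theorem above, and the restricted Yoneda embedding $\underline{(-)}\colon\CW\to\Cond{\Set}$ sends disjoint unions to coproducts and pushouts along inclusions of CW pairs to pushouts --- here one uses that objects of $\edCH$ are compact, so a map into such a colimit factors through a compact, hence finite, subcomplex, together with the fact that covers in $\edCH$ split; this is essentially contained in \cite{Scholze-Clausen(2019condensed)}. Since every homotopy pushout of CW complexes is modelled by a strict pushout along an inclusion of a CW pair, $F$ preserves disjoint unions and homotopy pushouts of CW complexes. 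On the other side $C_\bullet(-)\colon\CW\to D(\IZ)$ sends disjoint unions to coproducts and CW-pair pushouts to homotopy pushouts (additivity and excision for singular chains), and $\underline{(-)}\colon D(\IZ)\to\Cond{D(\IZ)}$ is left adjoint to the global-sections functor and so preserves all colimits; hence $F'$ has the same two preservation properties.

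Finally I would assemble the pieces. By the first paragraph, $F$ and $F'$ invert homotopy equivalences and therefore factor through the localisation $\gamma\colon\CW\to\CW[W^{-1}]\simeq\An$, giving $\overline F,\overline F'\colon\An\to\Cond{D(\IZ)}$ with $F\simeq\overline F\circ\gamma$ and $F'\simeq\overline F'\circ\gamma$. Every homotopy pushout and every small coproduct of anima is modelled by a strict pushout along a CW-pair inclusion, respectively a disjoint union, of CW complexes, so by the second paragraph $\overline F$ and $\overline F'$ preserve pushouts and small coproducts, hence all small colimits. Since $\Cond{D(\IZ)}\simeq D(\Cond{\Ab})$ is presentable and $\An$ is the free cocompletion of the one-point anima, evaluation at $\ast$ is an equivalence $\Fun^{L}(\An,\Cond{D(\IZ)})\xrightarrow{\ \sim\ }\Cond{D(\IZ)}$. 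As $\overline F(\ast)=\IZ[\underline{\ast}]^{L\blacksquare}=\IZ$ and $\overline F'(\ast)=\underline{C_\bullet(\ast)}=\IZ$ are both the monoidal unit, this forces a canonical equivalence $\overline F\simeq\overline F'$, and precomposing with $\gamma$ yields the desired $F\simeq F'$. The one genuinely non-formal ingredient is the computation $\IZ[\underline{[0,1]}]^{L\blacksquare}\simeq\IZ$ that powers the homotopy invariance of $\IZ[\underline{(-)}]^{L\blacksquare}$; the remaining delicate point is the careful verification that $\underline{(-)}\colon\Top\to\Cond{\Set}$ turns the colimit presentations of CW complexes into honest colimits of condensed sets, after which the argument is pure $\infty$-categorical bookkeeping.
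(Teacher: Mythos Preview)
The paper does not give its own proof of this proposition; it simply cites \cite[Example~6.5]{Scholze-Clausen(2019condensed)} and later, in the proof of Theorem~\ref{thm:comparison_condensed_classical_L2}, invokes the homotopy invariance of $X\mapsto\IZ[\underline X]^{L\blacksquare}$ with the remark that it ``formally follows from $\IZ[[0,1]]^{L\blacksquare}\simeq\IZ$''. Your argument is correct and is precisely a fleshed-out version of what is sketched in the cited reference: one reduces both functors to colimit-preserving functors out of anima and matches them on the point, the only substantive computation being $\IZ[\underline{[0,1]}]^{L\blacksquare}\simeq\IZ$. So there is nothing to compare against here beyond noting that your write-up is more detailed than either the paper or the source it cites; the one place where you should be a bit more careful is the claim that $\underline{(-)}\colon\CW\to\Cond{\Set}$ preserves pushouts along CW-pair inclusions, which is true but not quite as immediate as ``maps from $T\in\edCH$ factor through a finite subcomplex'' --- one also needs that the resulting description sheafifies correctly, which is where extremal disconnectedness enters.
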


We now explain why this generalises the $L^2$-Betti number of $G$-CW-complexes defined in the body of this article.
\begin{theorem}\label{thm:comparison_condensed_classical_L2}
  Let $X$ be a $G$-space which is $G$-homotopy equivalent to a $G$-CW-complex and $R$ a $\IZ[G]$-algebra.
  Then Borel homology of $X$ with coefficients in $R$ and homology of $X$ in the sense of Definition~\ref{def:equivariant_condensed_homology} are equivalent.
  In particular, if $X$ is $G$-homotopy equivalent to a free $G$-CW-complex, the $L^2$-Betti numbers of $X$ in the sense of~\eqref{b_n_upper_(2)(X,caln(G))} and Definition~\ref{def:L2_betti_numbers_condensed_anima} agree.
\end{theorem}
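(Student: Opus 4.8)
The plan is to reduce the statement to Proposition~\ref{prop:condensed_singular_homology}, which identifies, on the category $\CW$, the composite $\IZ[\underline{(-)}]^{L\blacksquare}$ with the constant-sheaf functor applied to the ordinary singular chain complex. First I would treat the equivariant refinement of this comparison. Given a free $G$-CW-complex $X$, choose a $G$-CW-structure; the quotient $X/G$ is a CW-complex, and the cellular $\IZ[G]$-chain complex $C_\bullet(X)$ is a complex of free $\IZ[G]$-modules whose underlying $\IZ$-complex computes the singular homology of $X$. The functor in~\eqref{eq:condensed_chain_complex} is $G$-equivariant by construction (it is applied objectwise in $\Cond{\Set}^{BG}$ before passing through the equivalences $\Cond{D(\IZ)}^{BG} \simeq \Cond{D(\IZ)^{BG}} \simeq \Cond{D(\IZ[G])}$), so Proposition~\ref{prop:condensed_singular_homology} applied to the underlying spaces, together with the fact that the $G$-action on $\underline X$ is induced from the $G$-action on $X$, promotes to an equivalence $\IZ[X]^{L\blacksquare} \simeq \underline{C_\bullet(X)}$ in $\Cond{D(\IZ[G])}$, where now $\underline{(-)}$ denotes the constant-sheaf functor $D(\IZ[G]) \to \Cond{D(\IZ[G])}$. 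The key point here is naturality: the constant-sheaf functor is symmetric monoidal and colimit-preserving, and $X$ is, as a $G$-space, the colimit of its cells $G \times D^n$, for which the comparison is checked by hand from the non-equivariant statement.

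Next I would base change to $R$ and take global sections. Since $\underline{(-)} \colon D(\IZ[G]) \to \Cond{D(\IZ[G])}$ is symmetric monoidal, we get $\IZ[X]^{L\blacksquare} \otimes_{\IZ[G]} R \simeq \underline{C_\bullet(X) \otimes_{\IZ[G]} R}$ in $\Cond{D(R)}$. The global sections functor $\Gamma \colon \Cond{\Mod_R} \to \Mod_R$ is right adjoint to the constant-sheaf functor, and one checks that $\Gamma \circ \underline{(-)}$ is naturally equivalent to the identity on $D(R)$ (the constant sheaf on the point recovers $M$, and $\Gamma$ of a constant sheaf on $\edCH$ is computed on the terminal object $\ast$, since $\edCH$ has a terminal object and the constant sheaf is already a sheaf after sheafification has no effect on the value at $\ast$). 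Therefore
\[
  H_n^{G,\blacksquare}(X;R) = \Gamma\bigl(H_n(\IZ[X]^{L\blacksquare} \otimes_{\IZ[G]} R)\bigr) \cong H_n\bigl(C_\bullet(X) \otimes_{\IZ[G]} R\bigr),
\]
which is precisely Borel (equivariant) homology of $X$ with coefficients in $R$. Since $X$ is only assumed $G$-homotopy equivalent to a $G$-CW-complex, one first invokes $G$-homotopy invariance of both sides: the left-hand side because the functor~\eqref{eq:condensed_chain_complex} sends $G$-homotopy equivalences of spaces to equivalences (as $\underline{(-)}$ does, using Proposition~\ref{prop:embedding_top_into_cond} and the fact that CW-complexes are compactly generated), and the right-hand side by the standard homotopy invariance of equivariant homology.

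Finally, specialize to $R = \caln(G)$ in the case where $X$ is $G$-homotopy equivalent to a \emph{free} $G$-CW-complex. Then $C_\bullet(X) \otimes_{\IZ[G]} \caln(G)$ is exactly the complex $\caln(G) \otimes_{\IZ G} C_*(X)$ appearing in~\eqref{b_n_upper_(2)(X,caln(G))}, so $H_n^{G,\blacksquare}(X;\caln(G)) \cong H_n(\caln(G) \otimes_{\IZ G} C_*^s(X))$ as $\caln(G)$-modules, and applying $\dim_{\caln(G)}$ gives the agreement of the two definitions of $b_n^{(2)}$. The main obstacle I anticipate is the equivariant upgrade of Proposition~\ref{prop:condensed_singular_homology}: one must be careful that the three equivalences $\Cond{D(\IZ)}^{BG} \simeq \Cond{D(\IZ)^{BG}} \simeq \Cond{D(\IZ[G])}$ are compatible with the constant-sheaf functor and with base change, and that the solidification $(-)^{L\blacksquare}$, which is a Bousfield localization, commutes with the relevant (homotopy) colimits used to build $X$ from its cells — this is where one genuinely uses that solidification is a symmetric monoidal left adjoint, hence preserves colimits, so that the cell-by-cell comparison assembles correctly.
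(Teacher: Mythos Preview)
Your approach is essentially the same as the paper's: organize the comparison into a commutative square built from Proposition~\ref{prop:condensed_singular_homology}, use that $\underline{(-)}$ commutes with derived base change $-\otimes_{\IZ[G]} R$, and that $\Gamma \circ \underline{(-)} \simeq \id$. The paper packages this as a single diagram on $\CW^{BG}$ rather than restricting first to free $G$-CW-complexes, but the content is the same.

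There is one genuine gap, in your homotopy-invariance step. You write that the composite~\eqref{eq:condensed_chain_complex} sends $G$-homotopy equivalences to equivalences ``as $\underline{(-)}$ does, using Proposition~\ref{prop:embedding_top_into_cond}''. But Proposition~\ref{prop:embedding_top_into_cond} only says that $\underline{(-)}$ is fully faithful on compactly generated spaces; it certainly does \emph{not} send homotopy equivalences to isomorphisms in $\Cond{\Set}$ (e.g.\ $\underline{[0,1]}$ is not isomorphic to $\underline{\ast}$). So nothing you have written explains why $X \mapsto \IZ[\underline{X}]^{L\blacksquare}$ inverts homotopy equivalences. The paper supplies the missing ingredient: one checks directly that $\IZ[\underline{[0,1]}]^{L\blacksquare} \simeq \IZ$, which forces the two endpoint inclusions $\{0\},\{1\} \hookrightarrow [0,1]$ to become equivalent after applying $\IZ[\underline{(-)}]^{L\blacksquare}$; hence homotopic maps are sent to the same morphism in $\Cond{D(\IZ)}$, and the functor is homotopy invariant. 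Once you insert this computation in place of the appeal to full faithfulness, your argument is complete and matches the paper's.

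A minor remark: your initial restriction to free $G$-CW-complexes is unnecessary for the first assertion of the theorem. Since the base change $-\otimes_{\IZ[G]} R$ in Definition~\ref{def:equivariant_condensed_homology} is derived, the diagram argument already gives Borel homology for an arbitrary $G$-CW-complex; freeness is only used, as you correctly note at the end, to identify Borel homology with the underived $\caln(G)\otimes_{\IZ G} C_*^s(X)$ of~\eqref{b_n_upper_(2)(X,caln(G))}.
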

\begin{proof}
  Consider the diagram
  \begin{equation}
  \begin{tikzcd}
    \CW^{BG} \ar[r, "{C_\bullet(-)}"] \ar[d, "{\IZ[-]}"] & D(\IZ)^{BG} \ar[r, "\simeq"] \ar[d, "\underline{(-)}"] & D(\IZ[G]) \ar[r, "{- \otimes_{\IZ[G]} R}"] \ar[d, "\underline{(-)}"] & D(R) \ar[d, "\underline{(-)}"] \\
    \Cond{D(\IZ)}^{BG} \ar[r, "{(-)^{L\blacksquare}}"] & \Cond{D(\IZ)}^{BG} \ar[r, "\simeq"] & \Cond{D(\IZ[G])} \ar[r, "{- \otimes_{\IZ[G]} R}"] & \Cond{D(R)}.
  \end{tikzcd}
  \end{equation}
  The left square commutes by Proposition~\ref{prop:condensed_singular_homology} and the rest of the diagram commutes for obvious reasons.
  The composite of the upper horizontal arrows recovers Borel homology of a $G$-CW-complex.
  It remains to note that taking global sections $\Gamma \colon \Cond{D(R)} \to D(R)$ is left inverse to the functor $\underline{(-)} \colon D(R) \to \Cond{D(R)}$.

  For a free $G$-CW-complex $X$, Borel homology of $X$ with coefficients in $R$ can be computed as homology of the degreewise tensor product of the singular chain complex of $X$ with $R$, as used in the definition of $L^2$-Betti numbers in~\cite[Definition 6.50 on page 263]{Lueck(2002)}.

  To extend this result to all $G$-spaces $G$-homotopy equivalent to free $G$-CW-complexes, observe that the functor $\Top \to \Cond{D(\IZ)}, X \mapsto \IZ[\underline X]^{L\blacksquare}$ sends homotopic maps to homotopic maps which formally follows from $\IZ[[0,1]]^{L\blacksquare} \simeq \IZ$.
\end{proof}

\begin{remark}[General actions]
  Note that the above definition does not recover $L^2$-Betti numbers for non-free $G$-CW-complexes.
  To remedy this, one can work parametrised over the orbit category $\Or(G)$ instead of $BG$.
\end{remark}

We now briefly explain the two alternative definitions of $L^2$-Betti numbers of condensed sets with $G$-action mentioned in the beginning of Section~\ref{sec:L2-invariants_and_condensed_mathematics}.

\begin{remark}[Condensed singular homology]
   One alternative is to model the construction of singular homology inside condensed sets defined as homology of the simplicial set
   \begin{equation*}
     \Hom_{\Cond{\Set}}(\underline{\Delta^\bullet}, X).
   \end{equation*}
   This in fact agrees with singular homology for all topological spaces since the spaces $\Delta^n$ are compactly generated so that the realization of the condensed set $\underline{\Delta^n}$ is $\Delta^n$.
\end{remark}

\begin{remark}[A cohomological variant]
  One can also define equivariant cohomology of condensed $G$-sets analogous to Definition~\ref{def:equivariant_condensed_homology}.
  For this, one assigns to a condensed set $X$ with $G$-action the homology of
  \begin{equation*}
    \Hom_{\Cond{D(\IZ[G])}}(\IZ[X]^{L\blacksquare}, \underline R) \in D(R).
  \end{equation*}
  Then the same arguments as in Theorem~\ref{thm:comparison_condensed_classical_L2} show that for free $G$-CW-complexes this recovers Borel cohomology.

  Note that as for any abelian group the induced constant condensed abelian group is solid we have
  \begin{align*}
    \Hom_{\Cond{D(\IZ[G])}}(\IZ[X]^{L\blacksquare}, \underline R) & \simeq \Hom_{\Cond{D(\IZ)}}(\IZ[X]^{L\blacksquare}, \underline R)^{hG} \\
    & \simeq \Hom_{\Cond{D(\IZ)}}(\IZ[X], \underline R)^{hG} \\
    & \simeq \Hom_{\Cond{D(\IZ[G])}}(\IZ[X], \underline R). \label{eq:condensed_cohomology_no_solidification}
  \end{align*}
  Because of that, this version of equivariant condensed cohomology looks most natural to us.
  From the viewpoint of $L^2$-invariants, working with homology instead of cohomology is often easier (for instance homology behaves better with respect to colimits) and chose to present the dual version in more detail.
\end{remark}

%%%%%%%%%%%%%%%%%%%%%%%%%%%%%%%%%%%%%%%%%%%%%%%%%%%%%%%%%%%%%%%%%%%%%%%%%%%%%%%
%%%%%%%%%%%%%%%%%%%%%%%%%%%%%%%%%%%%%% References %%%%%%%%%%%%%%%%%%%%%%%%%%%%%%%%%
%%%%%%%%%%%%%%%%%%%%%%%%%%%%%%%%%%%%%%%%%%%%%%%%%%%%%%%%%%%%%%%%%%%%%%%%%%%%%%%

% \bibliographystyle{abbrv}
% \bibliography{dbpub,dbpre}

\end{document}